\documentclass[11pt,reqno]{amsproc}
\usepackage[margin=1in]{geometry}
\usepackage{amsmath, amsthm, amssymb}
\usepackage{hyperref}
\hypersetup{colorlinks=true, pdfstartview=FitV, linkcolor=blue,citecolor=blue, urlcolor=blue}
\usepackage[abbrev,lite,nobysame]{amsrefs}
\usepackage{times}
\usepackage[usenames,dvipsnames]{color}
\usepackage{bbm}
\usepackage{bm}
\usepackage{subcaption}
\usepackage{mathtools}
\usepackage{pdfpages}
\usepackage{comment}
\definecolor{labelkey}{rgb}{0,0,1}

\newcommand{\R}{\mathbb{R}}

\newcommand{\N}{\mathbb{N}}

\newcommand{\B}{\mathcal{B}}

\providecommand{\R}{\mathbb{R}}

\providecommand{\N}{\mathbb{N}}

\newcommand{\pv}{\operatorname{p.\!v.}}

\renewcommand{\leq}{\leqslant}
\renewcommand{\geq}{\geqslant}

\newcommand{\dist}{\operatorname{dist}}

  \newtheorem{thm}{Theorem}[section]
  \newtheorem{cor}[thm]{Corollary}
  \newtheorem{Lemma}[thm]{Lemma}
  
  \theoremstyle{definition}
    
  \theoremstyle{remark}
  \newtheorem{remark}[thm]{Remark}

\usepackage{etoolbox}
\patchcmd{\subsubsection}{\itshape}{\itshape\bfseries}{}{} 

\title[]{A Brezis and Peletier type result for the fractional Robin function}

\author[]{Sidy M. Djitte and Franck Sueur}

\address[S. M. Djitte]{Department of Mathematics
Maison du nombre, 6 avenue de la Fonte, 
University of Luxembourg,  
L-4364 Esch-sur-Alzette, Luxembourg}
\email{sidymoctar.djitte@uni.lu}

\address[F. Sueur]{Department of Mathematics
Maison du nombre, 6 avenue de la Fonte, 
University of Luxembourg,  
L-4364 Esch-sur-Alzette, Luxembourg}
\email{Franck.Sueur@uni.lu}

\subjclass[2025]{35C15, 47G30} 
\keywords{Pohozaev identity, Green function, Robin function}

\begin{document}


\begin{abstract}
This paper is devoted to the Laplacian operator of fractional order $s\in (0,1)$ in several dimensions. We consider the equation $(-\Delta)^su=f(x,u)$ in $\Omega$, $u=0$ in $\Omega^c$ and establish a representation formula for partial derivatives of solutions in terms of the normal derivative $u/\delta^s$. As a consequence, we prove that solutions to the overdetermined problem $(-\Delta)^su=f(x,u)$ in $\Omega$, $u=0$ in $\Omega^c$, and $u/\delta^s=0$ on $\partial\Omega$ are globally Lipschitz continuous provided that $2s>1$. We also prove a Pohozaev-type identity for the Green function and, in particular, obtain a formula for the gradient of the Robin function, which extends to the fractional setting some results obtained by Br\'ezis and Peletier in  \cite{Bresiz} in the classical case of the Laplacian. Finally, an application to the nondegeneracy of critical points of the fractional Robin function in symmetric domains is discussed.
\end{abstract}
\maketitle

\setcounter{tocdepth}{3}
\tableofcontents

\section{Introduction and main results}


\subsection{Introduction}
\label{sec-intro}

The fractional Laplace operators appear in several  disciplines of mathematics: functional analysis, PDEs, probability theory; in diverse applications issued from biology, ecology or finance. They also have numerous definitions, based on Fourier analysis, harmonic extensions, semigroup theory or quadratic form, among others, see \cites{FRO,Kwas} and the references therein for more.
One definition of  the  fractional Laplace operator of order $s$, with $s\in (0,1)$, in $N\in \N_*$ dimensions, 
is based on the following explicit formula:
$$
(-\Delta)^s \,  u (x) := c_{N,s}\,  \pv \int_{\R^N}\frac{u(x)-u(z)}{|x-z|^{N+2s}} \, dz,
$$
with 
\begin{equation}
    \label{defC}
    c_{N,s} :=\pi^{N/2}\,  s4^s\,  \frac{\Gamma(\frac{N+2s}{2})}{\Gamma(1-s)},
\end{equation}
where $\Gamma$ is the gamma function, and where $\pv$ refers to the Cauchy principal value. 
Above $u$ is a function defined on  $\R^N$ with real values. 
The reason for the presence of the normalization constant $c_{N,s}$ is to match with the Fourier definition  which sets the fractional Laplace operator $(-\Delta)^s$ as the Fourier multiplier of symbol 
$|\xi|^{2s}$. 
On the other hand, the operator  $(-\Delta)^s$ is naturally associated with the bilinear form: 
$$
\mathcal E_s(u,v):=\frac{c_{N,s}}{2}\iint_{\R^N\times\R^N}\frac{(u(y)-u(z))(v(y)-v(z))}{|y-z|^{N+2s}}dydz.
$$

Throughout this manuscript, we shall denote by $H^s(\R^N)$ the set of all those $L^2(\R^N)$ functions $u$ for which $\mathcal E_s(u,u)<\infty$ and for a given bounded open set $\Omega\subset\R^N$, we let $\mathcal H^s_0(\Omega)$  be the space of all the elements of $ H^s(\R^N)$ for which $u\equiv 0$ in $\R^N\setminus\Omega$. In what follows, unless otherwise stated, $\Omega$ will be a bounded open set in $\R^N$ of class $C^{1,1}$ with $N\in \N_*$.

\subsection{Representation formula for partial derivatives}
Let $f: \Omega\to\R$ be a bounded, given source term.  The Dirichlet problem for the fractional Laplacian reads:
\begin{equation}\label{Dir-01}
(-\Delta)^s u=f\;\;\text{in} \quad\Omega\quad\&\quad 
u=0\quad 
\textrm{in }\;\;\R^N\setminus\Omega. 
\end{equation}
It is well known that the problem above admits a unique bounded solution which can be represented by 
\begin{equation}\label{repres-sol-Dir}
u(x)=\int_{\Omega}G_s(x,z)f(z)dz\;\;\;\text{for}\;x\in\Omega.
\end{equation}
Formula \eqref{repres-sol-Dir} is rather classical in potential theory, see the works \cite{BB1, BB2,FRO,Kwas}
 and the references therein. In the above, $G_s(x,\cdot)$ stands for the Green function associated with the operator $(-\Delta)^s$, that is, the solution to \begin{equation} \label{DIR}
    (-\Delta)^sG_s(x,\cdot)=\delta_x \quad\text{in}\quad \mathcal D'(\Omega)\qquad\&\qquad G_s(x,\cdot)=0\quad\text{in}\quad \R^N\setminus\Omega.
\end{equation}
Here $\delta_x$ denotes the Dirac delta distribution, and $\mathcal D'(\Omega)$ is the space of distributions on $\Omega$.
Formula \eqref{repres-sol-Dir} plays an important role in the qualitative and quantitative analysis of the problem \eqref{Dir-01}. For example, if $f\in L^\infty(\Omega)$, using the estimate on the Green function --see e.g \eqref{eq:greenfunct-estimate}-- one easily derives from \eqref{repres-sol-Dir} that $u$ is then also in $L^\infty(\Omega)$  with $\|u\|_{L^\infty(\Omega)}\leq c\|f\|_{L^\infty(\Omega)}$.
Further applications of the formula \eqref{repres-sol-Dir} arise in regularity theory, but also in the study of symmetry and sign properties of solutions (see \cite{FallTobias}). A generalization of formula  \eqref{repres-sol-Dir} for solutions with nonzero Dirichlet data can be found in \cite{Abatangelo}. One may also consult \cite{ASS} and the references therein for representation formulas for solutions to higher-order fractional Dirichlet problems.  
\par\,

Here we are interested in representation formulas for partial derivatives of solutions in terms of the source term $f$. 
Because of the singularity of the  Green function, one cannot simply apply a derivative to the formula \eqref{repres-sol-Dir} as one would for a regular integral with a parameter. In the classical case, and when the data are sufficiently regular, a representation formula for partial derivatives of solutions can be derived by a simple application  of Green's third formula, which we recall in what follows. Given $u\in C^2(\Omega)\cap C^1(\overline\Omega)$, we have 
\begin{equation*}\label{classical-repres-formula}
    u(x)=\int_{\Omega}G_1(x,z)(-\Delta)u(z)dz-\int_{\partial\Omega}\gamma_N( G_1(x,\cdot))(z)u(z)d\sigma,\;\;\;\text{for\,\, $x\in\Omega$},
\end{equation*}
where $G_1(x,\cdot)$ is the Green function of the Laplacian with singularity at $x$ and $\gamma_N(G_1(x,\cdot)(z) = \nabla_yG_1(x,z)\cdot \nu (z)$ for $z\in \partial \Omega$ is the Neumann trace.
\par\;

Indeed, if $u$ is sufficiently regular, say, for instance, $u\in C^3(\Omega)\cap C^2(\overline\Omega)$ so that $\partial_i u\in C^2(\Omega)\cap C^1(\overline \Omega)$, then plugging  $\partial_iu$ into the identity above and integrating by parts the first term, we get 
\begin{equation*}
    \partial_iu(x)= - \int_{\Omega}\partial_{ z_i} G_1(x,z)(-\Delta)u(z)dz-\int_{\partial\Omega}\gamma_N(G_1(x,\cdot))(z)\partial_i u(z)d\sigma,\;\;\;\text{for\,\, $x\in\Omega$}.
\end{equation*}
Thus, if $f\in C^1(\overline\Omega)$, $\Omega$ is smooth enough, and $u$ is the solution of 
$
(-\Delta)u=f$ in $\Omega$ and  $u=0$ on $\partial\Omega$, then $\partial_iu$ can be represented by 
\begin{equation}\label{repres-deriv-classical-Dir}
    \partial_iu(x)=-\int_{\Omega}\partial_{z_i} G_1(x,z)f(z)dz
   -\int_{\partial\Omega}\gamma_N (G_1(x,\cdot))(z)\partial_i u(z)d\sigma,\;\;\;\text{for\,\, $x\in\Omega$}.
\end{equation}
\par\;

Our first result establishes the counterpart of the identity \eqref{repres-deriv-classical-Dir} when the Laplacian is replaced by the fractional Laplacian $(-\Delta)^s$.  To state the result, we let $f:\R^N \times \R \to \R,\; (h,q) \mapsto f(h,q)$ with 
\begin{equation}\label{H}
\tag{H}
f \in 
\begin{cases}
C^\beta_{\mathrm{loc}}(\R^N \times \R), & \text{if } 2s>1,\\[2mm]
C^{1,\beta}_{\mathrm{loc}}(\R^N \times \R), & \text{if } 2s\leq 1,
\end{cases}
\text{ for some } \beta \in (0,1).
\end{equation}
Recall that a function $u\in C^s(\R^N)\cap\mathcal H^s_0(\Omega)$ is a weak solution of
\begin{equation}\label{sect3-H}
(-\Delta)^su=f(x,u)\;\;\text{in $\Omega$}\quad\&\quad u=0\;\;\text{in}\;\;\R^N\setminus\Omega,
\end{equation}
if 
$$\mathcal{E}_s(u, \phi) =\int_{\Omega}f(z, u(z))\phi(z)dz, $$
for all $\phi\in C^\infty_c(\Omega)$. Under the above assumptions on $f$ and by regularity theory (see \cite{LS, RS}) we know that any weak solution belongs in $C^1_{loc}(\Omega)$. We denote  $\partial_{x_i}u= \partial u/\partial {x_i}$ for $i=1,2,\cdots,N$, and $\nu_i=\nu\cdot e_i$ the $i$-coordinate of the outward unit normal  $\nu$ to $\partial\Omega$. For the sake of simplicity, for a sufficiently regular function $w$, we denote by $\gamma_0^s(w):= (w/\delta^s)\big|_{\partial\Omega}$ the fractional Neumann trace; here $\delta:=\dist(\cdot,\partial\Omega)$  is the distance function to the boundary $\partial\Omega$. With these notations, we have:
\\
\begin{thm}\label{main-result}
Let $N> 2s$ and let $\Omega$ be a bounded open set of $\R^N$ of class $C^{1,1}$ and $f:\R^N\times\R\to \R$ satisfies \eqref{H}. Let $u$ be a weak solution of \eqref{sect3-H}. Then, for all $x\in \Omega$, we have 
\begin{equation}\label{repres-partial-deriv}
\partial_{x_i} u(x)= -\Gamma^2(1+s)\int_{\partial \Omega} \gamma_0^s(u)\gamma_0^s(G_s(x,\cdot)) \nu_i\,d\sigma -\int_\Omega \partial_{z_i} G_s(x,z)f(z,u(z))\,dz,\quad \text{if}\,\, 2s>1.
\end{equation} 
\par\;
In the case $2s\leq 1$, we have
\begin{equation}\label{repres-partial-deriv-1}
\partial_{x_i} u(x)=  -\Gamma^2(1+s)\int_{\partial \Omega} \gamma_0^s(u)\gamma_0^s(G_s(x,\cdot))\nu_i\,d\sigma+\int_\Omega  G_s(x,\cdot)\Big({\partial_{h_i} f(z,u(z))+\partial_q f(z,u(z))\partial_{z_i} u(z)\Big)}dz .
\end{equation}
In particular, in the case $(-\Delta)^su=\text{const}$ in $\Omega$ \;\;\&\;\; $u=0$ in $\R^N\setminus\Omega$, we have 
\begin{equation} \label{dedu}
    \partial_{x_i} u(x)= - \Gamma^2(1+s)\int_{\partial \Omega} \gamma_0^s(u)\gamma_0^s(G_s(x,\cdot)) \nu_i\,d\sigma\quad\text{for all $s\in (0,1)$}.
\end{equation}
\end{thm}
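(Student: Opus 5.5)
The plan is to use the fractional integration by parts formula of Ros-Oton and Serra, a Pohozaev-type bilinear identity. For $v,w$ vanishing outside $\Omega$, with $v/\delta^s, w/\delta^s$ continuous up to $\partial\Omega$ and $(-\Delta)^s v,(-\Delta)^s w$ suitably integrable, it reads
\[
\int_\Omega \partial_i v\,(-\Delta)^s w\,dz+\int_\Omega \partial_i w\,(-\Delta)^s v\,dz=-\Gamma^2(1+s)\int_{\partial\Omega}\gamma_0^s(v)\gamma_0^s(w)\nu_i\,d\sigma .
\]
This is the translation-field case of the bilinear Pohozaev identity. We apply it with $v=u$ and $w=G_s(x,\cdot)$. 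Formally $(-\Delta)^s G_s(x,\cdot)=\delta_x$, so the first term gives $\partial_i u(x)$. The second term gives $\int_\Omega \partial_{z_i}G_s(x,z) f(z,u(z))\,dz$, and this yields \eqref{repres-partial-deriv}. Both $\gamma_0^s(u)$ and $\gamma_0^s(G_s(x,\cdot))$ exist and are continuous on $\partial\Omega$ by boundary regularity for $C^{1,1}$ domains, since $G_s(x,\cdot)$ solves a homogeneous problem near $\partial\Omega$.

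The Green function is too singular for this to be applied directly, so we regularize. Fix $x$ and take $\rho_\varepsilon$ a mollifier supported in $B_\varepsilon(x)\Subset\Omega$. Set $w_\varepsilon(z)=\int_\Omega G_s(y,z)\rho_\varepsilon(y)\,dy$, which is the solution with right-hand side $\rho_\varepsilon$; note $G_s$ is symmetric. Then $w_\varepsilon$ is a genuine solution with smooth bounded data, and the identity gives
\[
\int_\Omega \partial_i u\,\rho_\varepsilon\,dz+\int_\Omega \partial_i w_\varepsilon\, f(z,u)\,dz=-\Gamma^2(1+s)\int_{\partial\Omega}\gamma_0^s(u)\gamma_0^s(w_\varepsilon)\nu_i\,d\sigma .
\]
As $\varepsilon\to 0$ we pass to the limit term by term:
\begin{itemize}
\item The first term tends to $\partial_iu(x)$, since $u\in C^1_{loc}$.
\item On the boundary, $\gamma_0^s(w_\varepsilon)\to\gamma_0^s(G_s(x,\cdot))$ uniformly, since $y\mapsto\gamma_0^s(G_s(y,\cdot))$ is continuous for $y$ away from $\partial\Omega$.
\item In the middle term, $\partial_i w_\varepsilon\to\partial_{z_i}G_s(x,\cdot)$ in $L^1(\Omega)$. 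This uses the gradient bound $|\nabla_z G_s(x,z)|\lesssim |x-z|^{1-N}\ldots$ near $x$, together with $\lesssim \delta(z)^{s-1}$ near the boundary.
\end{itemize}

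The main obstacle is this last point. Integrability of $\delta^{s-1}$ is fine for every $s$, but the Pohozaev identity needs $(-\Delta)^s u=f(z,u)$ to be regular enough, roughly $C^{\beta}$ with $2s>1$ or $C^{1,\beta}$ otherwise. This matches hypothesis \eqref{H}. I would first verify the hypotheses of the Ros-Oton--Serra integration by parts in the case $2s>1$.

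For $2s\le1$ I would instead move the derivative off the Green function. Since $\partial_i w_\varepsilon$ behaves like $\delta^{s-1}$, its product with $f$ is integrable, and a (fractional) integration by parts in the middle term gives $-\int_\Omega w_\varepsilon\,\partial_{z_i}[f(z,u(z))]\,dz$ plus a boundary contribution. When $2s\le1$ we do not a priori have the global version of the identity with $u$ differentiated across $\partial\Omega$. So I would apply the identity with the roles reorganized: integrate $\int \partial_i w_\varepsilon f\,dz$ by parts classically. The boundary term vanishes because $w_\varepsilon f=O(\delta^s)$ on $\partial\Omega$. The chain rule then gives $\partial_{z_i}[f(z,u)]=\partial_{h_i}f+\partial_qf\,\partial_iu$, which is integrable against $G_s$ since $|\nabla u|\lesssim\delta^{s-1}$. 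This produces \eqref{repres-partial-deriv-1}, with the sign flipping as shown.

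Finally, \eqref{dedu} follows for $f$ constant. In that case $\int_\Omega\partial_{z_i}G_s(x,z)\,dz=0$ by the divergence theorem, since $G_s(x,\cdot)$ vanishes on $\partial\Omega$ and the singularity at $x$ is integrable. In the other formula the derivatives of $f$ vanish.
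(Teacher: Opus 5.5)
Your argument is correct in outline, but it takes a genuinely different route from the paper's.

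\textbf{How the paper handles the pole.} The paper keeps the exact Green function and truncates it twice, as $\xi_k\phi^\mu_x G_s(x,\cdot)$. As a result, \eqref{RS-DFW} is only ever applied to compactly supported functions, so no boundary term appears at that stage. The boundary integral is then generated by Lemma~\ref{DFW} as $k\to\infty$. The value $\partial_i u(x)$ is extracted through the product rule and the commutator $\mathcal I_s[\phi^\mu_x,G_s(x,\cdot)]$, using the explicit computations of Lemma~\ref{Lem6.1} (namely $a_{N,s}[\rho]=-2$) and Lemma~\ref{ess-convergence-result}.

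\textbf{How you handle it, and what it gains.} You smear the pole instead. The function $w_\varepsilon=\int\rho_\varepsilon(y)G_s(y,\cdot)\,dy$ solves a Dirichlet problem with $C^\infty_c$ data, so it is admissible in the full identity with boundary term. The relation $(-\Delta)^s w_\varepsilon=\rho_\varepsilon$ then produces $\partial_iu(x)$ for free. This avoids all the singular-integral bookkeeping near $x$. It also adapts directly to Theorem~\ref{Thm.2}: mollify both poles, and off the diagonal everything converges for every $s$.

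\textbf{What it costs.} The price is twofold.
\begin{enumerate}
\item You need the global identity \eqref{RS-DFW} for pairs that are not compactly supported. You should take it from \cite[Theorem 1.3]{DFW2023}, whose proof rests on the same boundary analysis as Lemma~\ref{DFW}. The Ros-Oton--Serra hypotheses, as stated there, require more interior regularity than a merely H\"older $f$ provides.
\item All the work moves to continuity in the pole variable, and you should make this explicit.
\begin{itemize}
\item For the trace, you need $\gamma_0^s(w_\varepsilon)(\sigma)=\int\rho_\varepsilon(y)\,\gamma_0^s(G_s(y,\cdot))(\sigma)\,dy$. Justify this by dominated convergence, using the bound $G_s(y,z)/\delta^s(z)\lesssim \delta^s(y)|y-z|^{-N}$ from \eqref{eq:greenfunct-estimate}. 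Then use continuity in $y$ of this Martin-kernel quantity; pointwise convergence on $\partial\Omega$ plus a uniform bound suffices, so no uniform convergence is needed.
\item For $2s>1$, you need continuity of $y\mapsto\nabla_zG_s(y,\cdot)$ in $L^1(\Omega)$. This follows from pointwise convergence plus uniform integrability via \eqref{gradient-estimate-green-func}.
\end{itemize}
\end{enumerate}

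\textbf{A correction about the case split.} Near the pole the correct bound is $|\nabla_zG_s(x,z)|\lesssim|x-z|^{2s-1-N}$, not $|x-z|^{1-N}$. This singularity is non-integrable exactly when $2s\le 1$. That, and not the boundary behaviour or the hypotheses of the identity, is the real reason for the case split: for $2s\le1$ the formula \eqref{repres-partial-deriv} is not even defined. Your fixed-$\varepsilon$ classical integration by parts onto $f(z,u(z))$ still handles this correctly. The boundary term vanishes because $w_\varepsilon f=O(\delta^s)$. The limit $\varepsilon\to0$ only needs continuity at $x$ of $y\mapsto\int G_s(y,z)\,\partial_i[f(z,u(z))]\,dz$, whose integrand is $\lesssim\delta^{2s-1}$ near $\partial\Omega$. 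This is the same mechanism as Case 2 of Lemma~\ref{Lem4.3} in the paper.
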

\par\;

As a simple consequence of Theorem \ref{main-result} we have 

\begin{cor}\label{cor}
    Let $f$ be as above, and $u\in C^s(\R^N)\cap \mathcal H^s_0(\Omega)$ be a solution to the problem
\begin{equation*}
\left\{ \begin{array}{rcll} (-\Delta)^su &=& f(x,u(x)) &\textrm{in }\;\;\Omega, \\ u&=&0&
\textrm{in }\;\;\Omega^c \end{array}\right. .
\end{equation*}
Assume moreover that $(u/\delta^s)\big|_{\partial\Omega}=0$ . Then $\nabla u\in L^\infty(\Omega)$ provided that $2s>1$.
\end{cor}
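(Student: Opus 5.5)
We apply formula \eqref{repres-partial-deriv} of Theorem \ref{main-result}, which holds since $2s>1$. The hypothesis $(u/\delta^s)|_{\partial\Omega}=0$ says exactly that $\gamma_0^s(u)\equiv 0$ on $\partial\Omega$. Hence the boundary integral vanishes identically, and for every $x\in\Omega$ we get
\begin{equation*}
\partial_{x_i}u(x)=-\int_\Omega \partial_{z_i}G_s(x,z)\,f(z,u(z))\,dz .
\end{equation*}
It then suffices to bound the right-hand side uniformly in $x\in\Omega$.

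First, the source term is bounded. Indeed, $u\in C^s(\R^N)$ and $u=0$ outside the bounded set $\Omega$, so $u$ is bounded and $(z,u(z))$ ranges over a compact subset $K\subset\R^N\times\R$. Since $f$ is continuous by \eqref{H}, we obtain $M:=\sup_{z\in\Omega}|f(z,u(z))|\leq\sup_K|f|<\infty$. Therefore
\begin{equation*}
|\nabla u(x)|\leq M\sup_{x\in\Omega}\int_\Omega|\nabla_z G_s(x,z)|\,dz .
\end{equation*}

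The main step is to show that $\sup_{x\in\Omega}\int_\Omega|\nabla_zG_s(x,z)|\,dz<\infty$ when $2s>1$. I would use the standard gradient bound for the Green function on $C^{1,1}$ domains (the companion of \eqref{eq:greenfunct-estimate}, obtained from interior regularity on balls of radius comparable to $\min(|x-z|,\delta(z))$):
\begin{equation*}
|\nabla_z G_s(x,z)|\lesssim \frac{G_s(x,z)}{\min\{|x-z|,\delta(z)\}}, \qquad\text{hence}\qquad |\nabla_z G_s(x,z)|\lesssim |x-z|^{2s-N-1}+\delta(z)^{s-1}|x-z|^{s-N},
\end{equation*}
using $G_s(x,z)\lesssim |x-z|^{2s-N}\min\{1,\delta(z)^s/|x-z|^s\}$ for $N>2s$.

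We then integrate the two terms separately.
\begin{itemize}
\item The first term is integrable uniformly in $x$, since $2s-N-1>-N$ exactly when $2s>1$.
\item For the second term, $\delta^{s-1}$ is integrable near $\partial\Omega$ because $s-1>-1$, and $|x-z|^{s-N}$ is locally integrable. The product is uniformly integrable by Hölder's inequality, choosing $p$ slightly larger than $1$ so that $(s-1)p>-1$ and $(s-N)p'>-N$. This is possible since $s>0$.
\end{itemize}

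I expect the hardest point to be justifying the pointwise gradient estimate near the boundary. If it is not available in the literature as stated, I would derive it by rescaling: apply interior $C^{1}$ estimates for $(-\Delta)^s$ to $G_s(x,\cdot)$ on $B_r(z)$ with $r=\tfrac12\min\{|x-z|,\delta(z)\}$, controlling the nonlocal tail with the global bound on $G_s$. This yields $|\nabla_z G_s(x,z)|\lesssim r^{-1}\sup_{B_{2r}(z)}G_s(x,\cdot)+r^{2s-1}\int_{\R^N\setminus B_r(z)} \frac{G_s(x,y)}{|y-z|^{N+2s}}\,dy$. The tail part is bounded by the same expression as above.

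Combining the three steps gives $\|\nabla u\|_{L^\infty(\Omega)}\lesssim M<\infty$, which proves the corollary.
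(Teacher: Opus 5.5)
Your strategy is the paper's. You apply \eqref{repres-partial-deriv}, drop the boundary term because $\gamma_0^s(u)=0$, bound $f(\cdot,u)$, and reduce everything to $\sup_{x\in\Omega}\|\nabla_z G_s(x,\cdot)\|_{L^1(\Omega)}<\infty$. The paper simply quotes this uniform bound (Lemma 9 of Bogdan et al.). It derives it from \eqref{eq:greenfunct-estimate} and \eqref{gradient-estimate-green-func}, which give your pointwise bound $|\nabla_zG_s(x,z)|\lesssim |x-z|^{2s-N-1}+\delta(z)^{s-1}|x-z|^{s-N}$; \eqref{gradient-estimate-green-func} is already recorded in the paper, so your fallback derivation is not needed.

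The gap is your H\"older step for the second term, which cannot be carried out. With $p$ slightly larger than $1$, the conjugate $p'$ is huge, and then $|x-\cdot|^{(s-N)p'}$ is not integrable. In general you need both $\tfrac1p>1-s$ and $\tfrac1{p'}>1-\tfrac sN$. Adding these gives $1>2-s-\tfrac sN$, which is possible only when $s>\tfrac{N}{N+1}$. Since $N>2s>1$ forces $N\geq 2$, this fails at least for every $s\in(\tfrac12,\tfrac23]$.

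No argument of this kind can work. Your H\"older bound is independent of $x$ and uses only $s>0$. But for $s\leq\tfrac12$ one has $\sup_{x\in\Omega}\int_\Omega\delta(z)^{s-1}|x-z|^{s-N}\,dz=\infty$. To see this, let $x$ tend to a boundary point $x_0$ and apply Fatou. In a nontangential cone at $x_0$ the limit integrand is $\gtrsim |z-x_0|^{2s-1-N}$, so the integral is bounded below by a multiple of $\int_0 r^{2s-2}\,dr$. So the hypothesis $2s>1$ is needed in the boundary term too. Global $L^p$ norms cannot detect it, because the singularity of $\delta^{s-1}$ lies on a hypersurface and meets the point singularity at $x$ only in a codimension-one way.

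A correct replacement uses the geometry of $\partial\Omega$. The second term only arises on $E_x:=\{z\in\Omega:\delta(z)<|x-z|\}$; on the complement $\delta^{s-1}\leq|x-z|^{s-1}$, which reduces to the first term. Cover $E_x$ by the dyadic shells $A_j=\{r_j/2<|x-z|\leq r_j\}$ with $r_j=2^{-j}\operatorname{diam}\Omega$. Use the $C^{1,1}$ estimate $|\{z\in B_r(x)\cap\Omega:\delta(z)<t\}|\lesssim r^{N-1}t$ for $0<t\leq r$, uniformly in $x$. Summing over the layers $\{2^{-k-1}r\leq\delta<2^{-k}r\}$ gives $\int_{B_r(x)\cap\{\delta<r\}}\delta^{s-1}\,dz\lesssim r^{N-1+s}\sum_k 2^{-ks}\lesssim r^{N-1+s}$. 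Since $A_j\cap E_x\subset B_{r_j}(x)\cap\{\delta<r_j\}$, you get
\[
\int_{E_x}\delta(z)^{s-1}|x-z|^{s-N}\,dz\lesssim\sum_{j\geq 0} r_j^{s-N}\,r_j^{N-1+s}=\sum_{j\geq 0} r_j^{2s-1}<\infty
\]
uniformly in $x$, and this sum converges precisely because $2s>1$. With this in place of the H\"older step, or by quoting the uniform $L^1$ bound as the paper does, your proof is complete.
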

Corollary \ref{cor} readily follows from \eqref{repres-partial-deriv} by using that $\nabla_zG_s(x,z)$ is --uniformly in $x$-- $L^1$-integrable in $\Omega$ provided that $2s>1$, see e.g \cite[Lemma 9]{Bogdan2011}.
\begin{remark}
 First, by regularity theory (see \cite{RS}) for each $x\in \Omega$, the functions: $\partial\Omega\to \R, z\mapsto \gamma_0^s(G_s(x,\cdot))(z)$, and $z\mapsto\gamma_0^s(u)(z)$ are continuous. Moreover, $\partial_{y_i} G_s(x,\cdot)\in L^1(\Omega)$ provided that $s\in (1/2,1)$ (see appendix \ref{subsec-9.1}). Finally, since $\delta^{1-s}\nabla u\in L^\infty(\Omega)$ (see e.g \cite{RS}), and $\delta^{s-1}G_s(x,\cdot)\in L^1(\Omega)$, it is not difficult to check that the second integral in \eqref{repres-partial-deriv-1} is finite.  Therefore, all the quantities appearing in the identities above are well defined.
 \end{remark}
 

%
\begin{remark}
When the data are sufficiently regular, it is possible to derive a formula like \eqref{repres-partial-deriv-1} by using the following Green's type identity for the fractional Laplacian (see \cite[Proposition 1.2.2]{Abatangelo}): for any $u\in C^{2s+\varepsilon}_{loc}(\Omega)$ such that $\delta^{1-s}u$ is continuous up to the boundary, and  

\[
\|u\|_{\mathcal L^1_s(\R^N\setminus\Omega,~ \delta^s)}:=\int_{\R^N\setminus\Omega}\delta^{-s}(z)|u(z)|(1+|z|)^{-N-2s}dz< + \infty, 
\]

with $u=0$ in $\R^N\setminus\overline{\Omega}$, we have
\begin{equation}\label{Abatangelo}
u(x)=\int_{\Omega}G_s(x,z)(-\Delta)^su(z)dz+\int_{\partial\Omega}\gamma_0^s(G_s(x,\cdot))\mathcal{E}u(\cdot)d\sigma\quad\text{for $x\in\Omega$},
\end{equation}
where 
\[
\mathcal{E}u(\sigma):=\lim_{\Omega\ni x\to\theta}\frac{u(x)}{\int_{\partial\Omega}\mathcal{M}_\Omega(x,\sigma')d\sigma'}\quad\text{and}\quad \mathcal{M}_\Omega(x,\sigma'):=\lim_{\Omega\ni y\to \sigma'}G_s(x,y)/\delta^s(y).
\]
\par\;
Indeed, if $u$ solves \eqref{sect3-H} with $f(x,u)=f(x)\in C^2(\overline{\Omega})$, then $\partial_ju\in C^{2s+\varepsilon}_{loc}(\Omega)\cap \mathcal L^1_s(\R^N\setminus\Omega,\, \delta^s)$ solves 
    
\begin{equation}\label{partial-u-pde}
\left\{ \begin{array}{rcll} (-\Delta)^s\partial_ju &=& \partial_jf &\textrm{in }\;\;\Omega , \\ \partial_ju&=&0&
\textrm{in }\;\;\R^N\setminus\overline\Omega, \\ \delta^{1-s}(x)\partial_ju(x)&=&s\gamma_0^s(u)(x)\nu_j(x)&\text{on}\;\; \partial\Omega. \end{array}\right. 
\end{equation}
The last identity in \eqref{partial-u-pde} follows from  \cite[Eq (1.5)]{FallSven}. Using \eqref{Abatangelo} with $\partial_ju$ and integrating by parts in the first integral, we get
\begin{align*}\label{Abatangelo}
\partial_ju(x)&=\int_{\Omega}G_s(x,z)(-\Delta)^s(\partial_ju)(z)dz+\int_{\partial\Omega}\gamma_0^s(G_s(x,\cdot))(\theta)\mathcal{E}(\partial_ju)(\sigma)d\sigma\\
&=\int_{\Omega}G_s(x,z)\partial_jf(z)dz+\int_{\partial\Omega}\gamma_0^s(G_s(x,\cdot))(\sigma)\mathcal{E}(\partial_ju)(\theta)d\sigma.
\end{align*}
We arrive at the desired identity once we prove that $\mathcal{E}(\partial_j u)(\sigma)=-\Gamma^2(1+s)\gamma_0^s(u)(\sigma)\nu_j(\sigma)$ holds for all $\sigma\in \partial\Omega$. Unfortunately, it is not clear how to obtain the latter for an arbitrary bounded open set $\Omega$.
\end{remark}

\textit{Idea of the proof of \ref{main-result}}. To prove Theorem \ref{main-result}, given the remark above, we use instead the following identity (see, e.g \cite[Theorem 1.3]{DFW2023} or \cite[Theorem 1.9]{RO-S}).
\begin{equation}\label{RS-DFW}
\int_\Omega \partial_{z_i} v(-\Delta)^sw\,dz =
-\int_\Omega \partial_{z_i}w(-\Delta)^sv\,dz-\Gamma^2(1+s)\int_{\partial\Omega}\gamma_0^s(v)\gamma_0^s(w)\,\nu_i\,d\sigma.
\end{equation}
Formally, we get \eqref{repres-partial-deriv} using $G_s(x,\cdot)$ and $u$ as test functions in \eqref{RS-DFW}. Unfortunately, the function $G_s(x,\cdot)$ is too irregular to be admissible in \eqref{RS-DFW}. To overcome this difficulty, we approximate $G_s(x,\cdot)$ by a $C^\infty_c$-function of the form $\xi_k \phi^\mu_{x}G_s(x,\cdot)$ where $\xi_k$  and $\phi^\mu_{x}$ are suitable cut-off functions which vanish near the boundary $\partial\Omega$ and near the singular point $x$ of the Green function, respectively. Next, we plug $\xi_k \phi^\mu_{x}G_s(x,\cdot)$ and $\xi_k u$ into \eqref{RS-DFW}. We expand by using the product rule for the fractional Laplacian, and then pass to the limit as $k\to+\infty$ and $\mu\to 0^+$ respectively to arrive at the desired identity. 
 For the passage to the limit with respect to $k$ we follow the analysis  done in \cite{DFW}, the novelty here mainly concerns the passage to the limit with respect to $\mu$. We refer to Section \ref{sec4.1} below for more details.

\subsection{Representation formulas for  gradient of the fractional Robin function}
\label{sec-rep-grad}
Let $N\geq 2s$. The fractional Green function $G_s$ can be split into:
\begin{equation}\label{Eq-spliting-of-Green}
    G_s(x,\cdot)= F_s(x,\cdot)-H_\Omega(x,\cdot),
\end{equation}
where 
\begin{equation}\label{funda}
   F_s(x,\cdot) :=\frac{b_{N,s}}{|x-\cdot|^{N-2s}}\quad\text{if $N> 2s$}\qquad\&\qquad F_s(x,\cdot)=-\frac{1}{\pi}\log|x-\cdot|\quad\text{if}\quad N=2s,
\end{equation}
with
\begin{equation} \label{fundaC}
b_{N,s} :=\pi^{-N/2} \, 4^{-s}  \, \frac{\Gamma(\frac{N-2s}{2})}{\Gamma(s)},
\end{equation}
 is the  fundamental solution of $(-\Delta)^s$ and 
$H_\Omega(x,\cdot)$ solves the equation
\begin{equation}\label{regular-part-of-Green}
\left\{ \begin{array}{rcll} (-\Delta)^s H_\Omega(x,\cdot)&=& 0  &\textrm{in }\Omega, \\ H_\Omega(x,z)&=&F_s(x,z)&
\textrm{in }\R^N\setminus\Omega. \end{array}\right. 
\end{equation}

In this section, we are interested in a representation formula for the gradient of the fractional Robin function. We recall that the fractional Robin function associated with the domain $\Omega$, denoted as $\mathcal R_s^\Omega:\Omega\to \R$ is given for $x$ in $\Omega$ by 
\begin{equation}\label{def-s-robin}
\mathcal R_s^\Omega(x):= H_\Omega(x,x),
\end{equation}
 where $H_\Omega(x,\cdot)$ is the regular part of the Green function $G_s(x,\cdot)$ as defined in \eqref{regular-part-of-Green}.
 To simplify the notation, from now on we simply write $\mathcal R_s$ in place of $\mathcal R_s^\Omega$. When $s=1$, i.e, in the classical case of the Laplacian, we write $\mathcal R$ instead of $\mathcal R_1$.  

The Robin function plays an important role in various fields of mathematics such as geometric function theory, capacity theory, and concentration problems (see e.g \cite{Bandle, Bresiz,Flucher} and the references therein). In particular, critical points of the Robin function (or the so-called harmonic centers of $\Omega$) appear when studying elliptic boundary value problems involving energy concentration \cite{Flucher}. And also for elliptic problems involving critical Sobolev exponent, the number of solutions is linked to the number of (non-degenerate) critical points of the Robin function (see e.g \cite{O.Rey}). Despite the great importance of the Robin function, many questions regarding its properties remain unanswered even in the classical case. Below, we prove two basic properties (integral representation of the gradient and non-degeneracy in symmetric domains) of the fractional Robin function. First, let us recall the following result obtained by Br\'ezis and Peletier in \cite{Bresiz} that expresses the  derivative of the classical Robin function with respect to the space variable $x$ in terms of an integral of the normal derivative of the Green function over the domain's boundary. 
 \begin{thm}\cite{Bresiz}\label{thm-brezis}
     Let $\Omega$ be a bounded open set of $\R^N$ ($N\geq 2$) and let  $\mathcal R$ be the Robin function of $\Omega$ associated with the classical Laplacian. Then  for any $x$ in $\Omega$,
     \begin{align}\label{repres-gradient-class-Robin}
         \nabla\mathcal R(x)=\int_{\partial\Omega}( \gamma_N (G_1(x,\cdot)))^2\, \nu \, d\sigma .
     \end{align}
 \end{thm}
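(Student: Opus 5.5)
We derive \eqref{repres-gradient-class-Robin} from the classical Rellich--Pohozaev identity applied to the Green function, which is the classical counterpart of \eqref{RS-DFW}. Fix $x\in\Omega$ and write $G(z)=G_1(x,z)=F_1(x,z)-H(x,z)$, where $F_1$ is the fundamental solution of $-\Delta$ and $H=H_\Omega$ is the harmonic regular part. By symmetry of the Green function, $H(x,z)=H(z,x)$, so
\[
\partial_{x_i}\mathcal R(x)=\partial_{x_i}H(x,z)\big|_{z=x}+\partial_{z_i}H(x,z)\big|_{z=x}=2\,\partial_{z_i}H(x,z)\big|_{z=x}.
\]
The goal is therefore to show
\[
2\,\partial_{z_i}H(x,x)=\int_{\partial\Omega}(\partial_\nu G)^2\nu_i\,d\sigma .
\]

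For two functions $v,w$ that are harmonic in a smooth region $D$, integration by parts gives the Rellich-type identity
\[
\int_{\partial D}\big(\partial_\nu v\,\partial_i w+\partial_\nu w\,\partial_i v-\nabla v\cdot\nabla w\,\nu_i\big)\,d\sigma=0,
\]
since the divergence of the integrand field is $\Delta v\,\partial_iw+\Delta w\,\partial_iv=0$. We apply it with $v=w=G$ on $D=\Omega\setminus \overline{B_\varepsilon(x)}$ and then let $\varepsilon\to0$.

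\emph{Outer boundary.} On $\partial\Omega$ we have $G=0$, hence $\nabla G=(\partial_\nu G)\nu$. The integrand there becomes
\[
2(\partial_\nu G)^2\nu_i-(\partial_\nu G)^2\nu_i=(\partial_\nu G)^2\nu_i .
\]
For a merely bounded open set this requires enough boundary regularity for $\partial_\nu G$ to make sense. We would assume $\Omega$ smooth, or $C^{1,1}$ as elsewhere in the paper, so that $G\in C^1$ up to the boundary away from $x$.

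\emph{Small sphere.} On $\partial B_\varepsilon(x)$, oriented with normal pointing toward $x$, expand $\nabla G=\nabla F_1-\nabla H$. We separate the terms by their order in $\nabla F_1$:
\begin{itemize}
\item The terms quadratic in $\nabla F_1$ integrate to zero by odd symmetry, because $\partial_iF_1\,|\nabla F_1|^2$ is odd about $x$.
\item The terms quadratic in $\nabla H$ are $O(\varepsilon^{N-1})$ and vanish in the limit.
\item The cross terms $-2\big(\partial_\nu F_1\,\partial_iH+\partial_\nu H\,\partial_iF_1-\nabla F_1\cdot\nabla H\,\nu_i\big)$ are the only ones that survive. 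Since $\nabla H$ is continuous at $x$, we may replace it by the constant vector $\nabla_zH(x,x)$, and since $|\nabla F_1|\sim c\,\varepsilon^{1-N}$, the three pieces are computed by the standard averages $\int_{S^{N-1}}\theta_i\theta_j\,d\theta=\frac{|S^{N-1}|}{N}\delta_{ij}$.
\end{itemize}
With the normalization $\nabla F_1(z)=-\frac{1}{|S^{N-1}|}\frac{z-x}{|z-x|^N}$ (this includes $N=2$), the cross terms combine to a multiple of $\partial_{z_i}H(x,x)$. Carrying out the computation, $-\partial_\nu F_1\,\partial_iH$ contributes $\partial_iH(x,x)$, while the other two pieces contribute $\frac1N\partial_iH$ and $-\frac1N\partial_iH$ and cancel. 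Including the factor $2$, the limit of the sphere integral is $\pm 2\,\partial_{z_i}H(x,x)$, with the sign fixed by the orientation.

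Equating the two boundary contributions gives $\int_{\partial\Omega}(\partial_\nu G)^2\nu_i\,d\sigma=2\partial_{z_i}H(x,x)=\partial_i\mathcal R(x)$.

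The main obstacle is the small-sphere computation: keeping the orientation and the normalization constants consistent so that the final coefficient is exactly $1$ rather than $-1$ or $2$. The other points are routine: the justification of $\nabla\mathcal R=2\nabla_zH(x,x)$ via symmetry of $H$ and smoothness of $H$ in both variables off the boundary, and the boundary regularity needed to define $\partial_\nu G$.
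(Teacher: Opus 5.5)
Your argument is correct, given that $\Omega$ is smooth or $C^{1,1}$, which is needed anyway for $\gamma_N$ to make sense. Note, though, that the paper does not prove this statement: it is quoted from Br\'ezis--Peletier \cite{Bresiz} as motivation.

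The relevant comparison is with the paper's route to the fractional analogue, Theorem \ref{Thm.2} followed by Corollary \ref{nodeg}, and that route differs from yours.

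\textbf{Your route.} You use a single pole. You apply the Rellich identity with $v=w=G_1(x,\cdot)$ on $\Omega\setminus\overline{B_\varepsilon(x)}$. The self-interaction of $F_1$ drops out by odd symmetry, and the $F_1$--$H$ cross term produces $2\nabla_zH(x,x)$ directly.

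\textbf{The paper's route.} It uses two distinct poles $x\neq y$. It first derives the off-diagonal identity \eqref{partial-Green} for $\partial_{x_i}G_s(y,x)+\partial_{y_i}G_s(x,y)$. It then removes the singular parts through the exact cancellation $\partial_{x_i}F_s(x,y)+\partial_{y_i}F_s(x,y)=0$, and only afterwards lets $y\to x$ in the regular part. At $s=1$ this amounts to Rellich with $v=G_1(x,\cdot)$, $w=G_1(y,\cdot)$ on $\Omega$ minus two small balls. Each small sphere then sees one singular factor against a smooth one and contributes a single first derivative.

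\textbf{What each buys.} Your argument is shorter and fully explicit, but it relies on locality: you can excise a ball and compute exact sphere fluxes of the radial field $\nabla F_1$. In the nonlocal setting, \eqref{RS-DFW} has no interior boundary terms. Excision is replaced by the smooth cut-offs $\phi^\mu_x$, and the limits come from the product rule and the constants in \eqref{sec7-8} and Lemma \ref{Lem6.1}. With two poles, each cut-off meets only one singular factor. So one never has to evaluate a nonlocal version of your singular--singular odd-symmetry cancellation, and the off-diagonal identity \eqref{partial-Green} comes as extra information.

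\textbf{On the sign you flagged.} With the normal on $\partial B_\varepsilon(x)$ pointing toward $x$, as you chose, $\partial_n F_1=+|S^{N-1}|^{-1}\varepsilon^{1-N}$. Hence the term $-2\,\partial_nF_1\,\partial_iH$ contributes $-2\partial_{z_i}H(x,x)$; your ``contributes $\partial_iH$'' corresponds to the opposite orientation. The total flux over $\partial\Omega\cup\partial B_\varepsilon(x)$ vanishes, so the $\partial\Omega$ term equals $+2\partial_{z_i}H(x,x)$. The coefficient is therefore exactly $1$, as you claimed.
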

 We recall that $\nu$ is the outward unit normal to $\partial\Omega$.
 \par\;
 
One of our main results extends Theorem \ref{thm-brezis} to the fractional setting. More generally, we have the following 
\begin{thm}
\label{Thm.2}
Let  $s\in (0,1)$, $N> 2s$ and let $\Omega$ be a bounded open set of $\R^N$ of class $C^{1,1}$.  Then for any $x,y\in \Omega$ with $x\neq y$ and for any $i=1,\cdots, N$, there holds:
\begin{equation}\label{partial-Green}
 \partial_{x_i} G_s(y,x)+\partial_{y_i} G_s(x,y)=  -\Gamma^2(1+s)\int_{\partial \Omega} 
 \gamma_0^s(G_s(x,\cdot)) \gamma_0^s(G_s(y,\cdot))
 \nu_i \, d\sigma.    
\end{equation}
\end{thm}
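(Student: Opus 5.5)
We apply the integration by parts identity \eqref{RS-DFW} with $v=G_s(x,\cdot)$ and $w=G_s(y,\cdot)$. Formally, $(-\Delta)^s w=\delta_y$ and $(-\Delta)^s v=\delta_x$, so the left side of \eqref{RS-DFW} becomes $\partial_{z_i}G_s(x,z)|_{z=y}$ and the first term on the right becomes $-\partial_{z_i}G_s(y,z)|_{z=x}$. This rearranges exactly to \eqref{partial-Green}. The whole proof therefore consists in making this formal computation rigorous, and the natural route is to reduce it to Theorem \ref{main-result}, which already handles one Green function against a genuine solution.

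\textbf{Step 1 (one singularity at a time).} Fix $x\neq y$ and choose $\varepsilon>0$ small with $|x-y|>4\varepsilon$. Let $\rho_\varepsilon$ be a standard mollifier supported in $B_\varepsilon(0)$, and let $u_\varepsilon$ solve
\[
(-\Delta)^s u_\varepsilon=\rho_\varepsilon(\cdot-y)\ \text{in }\Omega,\qquad u_\varepsilon=0\ \text{in }\R^N\setminus\Omega,
\]
so that $u_\varepsilon(z)=\int_\Omega G_s(z,w)\rho_\varepsilon(w-y)\,dw$. The source term is $C^\infty$ and independent of $u$, so \eqref{H} holds in both regimes of $s$. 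Applying Theorem \ref{main-result} at the point $x$: for $2s>1$,
\[
\partial_{x_i}u_\varepsilon(x)=-\Gamma^2(1+s)\int_{\partial\Omega}\gamma_0^s(u_\varepsilon)\gamma_0^s(G_s(x,\cdot))\nu_i\,d\sigma-\int_\Omega\partial_{z_i}G_s(x,z)\rho_\varepsilon(z-y)\,dz,
\]
and for $2s\le1$ the last term is replaced by $\int_\Omega G_s(x,z)\partial_{z_i}[\rho_\varepsilon(z-y)]\,dz$, which equals the same expression after one integration by parts. This is legitimate because $\rho_\varepsilon(\cdot-y)$ is supported away from both $x$ and $\partial\Omega$.

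\textbf{Step 2 (limit $\varepsilon\to0$).} We pass to the limit term by term, using the symmetry $G_s(z,w)=G_s(w,z)$.
\begin{itemize}
\item On $B_{2\varepsilon}(x)$, $u_\varepsilon$ is an average of $G_s(\cdot,w)$ with $w$ near $y$, so $\partial_{x_i}u_\varepsilon(x)\to\partial_{x_i}G_s(y,x)$. This needs local $C^1$ continuity of $G_s$ off the diagonal in both variables, which follows from interior regularity.
\item The last term converges to $-\partial_{z_i}G_s(x,z)|_{z=y}=-\partial_{y_i}G_s(x,y)$, since $G_s(x,\cdot)$ is smooth near $y\neq x$.
\item For the boundary term we need $\gamma_0^s(u_\varepsilon)\to\gamma_0^s(G_s(y,\cdot))$ in $L^1(\partial\Omega)$. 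Since $\gamma_0^s(G_s(y,\cdot))(\sigma)=\mathcal M_\Omega(y,\sigma)$, we have $\gamma_0^s(u_\varepsilon)(\sigma)=\int\mathcal M_\Omega(w,\sigma)\rho_\varepsilon(w-y)\,dw$. Uniform boundary regularity (the $C^{s}$ bound for $G_s(\cdot,\cdot)/\delta^s$ near the boundary, away from the diagonal, from \cite{RS}) gives continuity of $w\mapsto \mathcal M_\Omega(w,\sigma)$ uniformly in $\sigma$. Hence the convergence is uniform on $\partial\Omega$, and $\gamma_0^s(G_s(x,\cdot))$ is bounded.
\end{itemize}
Rearranging the limit gives \eqref{partial-Green}.

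\textbf{Main obstacle.} The delicate point is the uniform justification in Step 2: $\gamma_0^s(G_s(w,\cdot))$ must depend continuously on $w$ near $y$, uniformly on $\partial\Omega$. To obtain this I would write $G_s(w,\cdot)=\eta F_s(w,\cdot)+\psi_w$, where $\eta$ is a cutoff equal to $1$ near $y$ and supported away from $\partial\Omega$. The remainder $\psi_w$ then solves a Dirichlet problem with right-hand side $-(-\Delta)^s(\eta F_s(w,\cdot))$, which is bounded in $\Omega\setminus\operatorname{supp}\eta$ and depends smoothly on $w$. The boundary regularity estimate $\|\psi/\delta^s\|_{C^{\alpha}(\overline\Omega)}\le C\|(-\Delta)^s\psi\|_{L^\infty}$ from \cite{RS} then yields the required uniform convergence; the case $2s\le1$ requires no extra work, since the reduction in Step 1 already covered it.
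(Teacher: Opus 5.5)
Your proof is correct, but it takes a different route from the paper.

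\textbf{The paper's route.} The paper never smooths the pole at $y$. It inserts both truncated Green functions $\xi_k\phi^\mu_{x}G_s(x,\cdot)$ and $\xi_k\phi^\gamma_{y}G_s(y,\cdot)$ into \eqref{eq:pohozaev.1} and passes to the triple limit $k\to+\infty$, $\mu\to0^+$, $\gamma\to0^+$. It uses Lemma \ref{DFW} for the boundary terms and Lemma \ref{Lem4.2} (with $u=\phi^\gamma_{y}G_s(y,\cdot)$) at the pole $x$. At the pole $y$ it needs a second blow-up computation, Lemma \ref{lem-lim-G-x-y}, which reruns Lemmas \ref{ess-convergence-result} and \ref{Lem6.1} with weight $\partial_{z_i}G_s(x,\cdot)$. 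Since the cut-offs equal $1$ near $\partial\Omega$, the boundary integrand is exactly $\gamma_0^s(G_s(x,\cdot))\gamma_0^s(G_s(y,\cdot))$ throughout. So the paper never needs to know how the fractional normal derivative depends on the pole.

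\textbf{Your route.} You treat only the pole at $x$ with cut-offs, via Theorem \ref{main-result} applied to the smooth source $\rho_\varepsilon(\cdot-y)$, and move the remaining work into the limit $\varepsilon\to0$. That limit costs you two standard regularity facts the paper avoids: joint $C^1$ continuity of $G_s$ off the diagonal, and continuity of $w\mapsto\gamma_0^s(G_s(w,\cdot))$ uniformly on $\partial\Omega$.

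Your decomposition $G_s(w,\cdot)=\eta F_s(w,\cdot)+\psi_w$ does yield the second fact, but state the equation for $\psi_w$ precisely. In $\Omega$ one has $(-\Delta)^s\psi_w=\delta_w-(-\Delta)^s(\eta F_s(w,\cdot))=(-\Delta)^s\big((1-\eta)F_s(w,\cdot)\big)$. For $w$ in the interior of $\{\eta=1\}$ this is bounded on all of $\R^N$ and Lipschitz in $w$ in $L^\infty$. Boundedness only on $\Omega\setminus\operatorname{supp}\eta$, as you wrote, is not enough to invoke the global estimate $\|\psi/\delta^s\|_{C^\alpha(\overline\Omega)}\leq C\|(-\Delta)^s\psi\|_{L^\infty(\Omega)}$.

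\textbf{What each route buys.} Yours exhibits \eqref{partial-Green} as the kernel form of Theorem \ref{main-result}, obtained by concentrating the source at $y$. It dispenses with the second blow-up lemma and covers $2s\leq1$ with a single integration by parts. The paper's route stays entirely inside its cut-off machinery and needs no estimate on how the boundary behavior of $G_s(w,\cdot)$ depends on $w$.
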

The proof of Theorem \ref{Thm.2} is similar to the proof of Theorem \ref{main-result}. To obtain the first, we simultaneously use the functions $\R^N\to\R, \;z\mapsto \phi^\gamma_{y}(z)G_s(y,z)\xi_k(z)$ and $\R^N\to\R,\;z\mapsto\phi^\mu_{x}(z)G_s(x,z)\xi_k(z)$ with $x\neq y$ as test functions in \eqref{RS-DFW} and then argue as in the proof of Theorem \ref{main-result}. We refer to Section \ref{sec4} for more details.
\par\,

As a corollary of Theorem \ref{Thm.2} we obtain the following result for the Robin function. 
\begin{cor} \label{nodeg}
Let  $s\in (0,1)$ and let $\Omega$ be a bounded open set of $\R^N$ of class $C^{1,1}$ with $N> 2s$.
For any $x \in \Omega$, we have
\begin{equation}\label{repres-gradient-Robin func}
    \nabla \mathcal R_s(x)=\Gamma^2(1+s)\int_{\partial \Omega} \Big(\gamma_0^s(G_s(x,\cdot))\Big)^2\nu\,d\sigma .
\end{equation}
\end{cor}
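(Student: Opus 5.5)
The plan is to derive Corollary \ref{nodeg} from Theorem \ref{Thm.2} by letting $y\to x$, using the splitting \eqref{Eq-spliting-of-Green} to cancel the singular parts.

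First, express the left-hand side of \eqref{partial-Green} through the regular part. By symmetry of the Green function, $G_s(y,x)=G_s(x,y)$, and the same holds for $F_s$. Hence $H_\Omega(x,y)=H_\Omega(y,x)$. Write $G_s(y,x)=F_s(y,x)-H_\Omega(y,x)$ and differentiate in $x_i$; similarly write $G_s(x,y)=F_s(x,y)-H_\Omega(x,y)$ and differentiate in $y_i$. For the singular parts,
\[
\partial_{x_i}F_s(y,x)+\partial_{y_i}F_s(x,y)=-(N-2s)b_{N,s}\Big(\tfrac{x_i-y_i}{|x-y|^{N-2s+2}}+\tfrac{y_i-x_i}{|x-y|^{N-2s+2}}\Big)=0 ,
\]
because $F_s$ depends only on $|x-y|$. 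Thus \eqref{partial-Green} becomes
\[
-\partial_{x_i}H_\Omega(y,x)-\partial_{y_i}H_\Omega(x,y)=-\Gamma^2(1+s)\int_{\partial\Omega}\gamma_0^s(G_s(x,\cdot))\gamma_0^s(G_s(y,\cdot))\nu_i\,d\sigma .
\]
Writing $h(a,b):=H_\Omega(a,b)$, which is symmetric, the left side equals $-[\partial_2 h(y,x)+\partial_2 h(x,y)]_i$, where $\partial_2$ denotes the gradient in the second slot.

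Next, let $y\to x$. On the left we need $H_\Omega$ to be $C^1$ jointly on $\Omega\times\Omega$. This holds because $H_\Omega(x,\cdot)$ is $s$-harmonic in $\Omega$ with exterior datum $F_s(x,\cdot)$, which depends smoothly on $x$ for $z$ away from $x$. Interior regularity then gives smooth dependence of $H_\Omega$ in both variables, locally in $\Omega\times\Omega$. The limit is $-2\,\partial_{2,i}h(x,x)$. By symmetry, $\nabla\mathcal R_s(x)=\partial_1 h(x,x)+\partial_2 h(x,x)=2\,\partial_2 h(x,x)$. So the left side tends to $-\partial_{x_i}\mathcal R_s(x)$.

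On the right we need $\gamma_0^s(G_s(y,\cdot))\to\gamma_0^s(G_s(x,\cdot))$ in $L^1(\partial\Omega)$ or uniformly as $y\to x$. This is the main obstacle. The approach is to write $G_s(y,\cdot)=F_s(y,\cdot)-H_\Omega(y,\cdot)$ and use a cutoff: $G_s(y,\cdot)-G_s(x,\cdot)$ solves $(-\Delta)^s w=0$ in a neighbourhood $\Omega\setminus B_r(x)$ of $\partial\Omega$, vanishes outside $\Omega$, and is small in $L^\infty$ on $B_{2r}(x)^c\cap\Omega$ and in $L^1$ overall. Boundary regularity \cite{RS} for $w/\delta^s$ in $C^{\alpha}(\overline{\Omega}\setminus B_{2r}(x))$ then gives uniform convergence of the fractional Neumann traces, with the $L^1$ tail controlling the nonlocal term. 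Letting $y\to x$ and multiplying by $-1$ yields \eqref{repres-gradient-Robin func}.
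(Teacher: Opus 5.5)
Your proposal is correct and follows the paper's proof. Like the paper, you apply Theorem~\ref{Thm.2}, cancel the singular parts because $F_s$ depends only on $|x-y|$, let $y\to x$, and use the symmetry of $H_\Omega$ to identify $2\,\partial_{2,i}H_\Omega(x,x)$ with $\partial_{x_i}\mathcal R_s(x)$.

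You go beyond the paper in two places. First, you justify the passage $y\to x$ through the joint $C^1$ regularity of $H_\Omega$ and the uniform convergence of $\gamma_0^s(G_s(y,\cdot))$ via localized boundary regularity; the paper takes both for granted. Second, your sign bookkeeping is consistent throughout, whereas the paper's intermediate display has the wrong sign on its right-hand side, which it silently corrects in the conclusion.
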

The proof of  Corollary \ref{nodeg} is given in Section \ref{sec-nodeg}. 
\begin{remark}
    Formula \eqref{repres-gradient-Robin func} is consistent with \eqref{repres-gradient-class-Robin} as the latter can be obtained by formally sending $s$ to  $1^-$ in \eqref{repres-gradient-Robin func}.
\end{remark}

As a further corollary, we obtain the following result in the case where the domain enjoys some symmetries. It is inspired by \cite{Gr} and proved,  thanks to formula \eqref{repres-gradient-Robin func},  in Section \ref{sec-robin}. 
\begin{cor}\label{non-deg-frac-Robin}
    Let $\Omega$ be a bounded open set of class $C^{1,1}$ of $\R^N$ with $N\geq 2$. 
  Assume that there is $j\in \{1,2,\cdots, N\}$ such that $\Omega$ is symmetric with respect to the hyperplane $H_j:=\big\{x\in\R^N:x_j=0\big\}$ and that $0$ is in $\Omega$. 
 Then $\partial_j  \mathcal R_s(0)=0$.
 If, moreover, there is $i\in \{1,2,\cdots, N\}$, with $i \neq j$, such that $\Omega$ is also symmetric with respect to the hyperplane $H_i$, then $\partial_{ij}  \mathcal R_s(0)=0$.
\end{cor}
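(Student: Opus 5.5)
The plan is to use formula \eqref{repres-gradient-Robin func} of Corollary \ref{nodeg} together with the reflection invariance of the Green function. Let $\tau_j$ be the reflection $x\mapsto x-2x_je_j$ across $H_j$. Since $\Omega=\tau_j(\Omega)$ and $(-\Delta)^s$ commutes with orthogonal transformations, uniqueness for \eqref{DIR} gives
\[
G_s(\tau_j x,\tau_j z)=G_s(x,z)\quad\text{for } x\neq z \text{ in } \Omega .
\]
Moreover $\delta\circ\tau_j=\delta$, and $\nu(\tau_j\sigma)=\tau_j\nu(\sigma)$ on $\partial\Omega$. In coordinates, $\nu_j(\tau_j\sigma)=-\nu_j(\sigma)$ and $\nu_k(\tau_j\sigma)=\nu_k(\sigma)$ for $k\neq j$. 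Also, $\tau_j$ preserves the surface measure $d\sigma$.

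For the first claim, note that $\tau_j 0=0$, so $\gamma_0^s(G_s(0,\cdot))(\tau_j\sigma)=\gamma_0^s(G_s(0,\cdot))(\sigma)$. Apply \eqref{repres-gradient-Robin func} at $x=0$ and change variables $\sigma\mapsto\tau_j\sigma$. The integrand $(\gamma_0^s(G_s(0,\cdot)))^2\nu_j$ is odd under this change, so $\partial_j\mathcal R_s(0)=-\partial_j\mathcal R_s(0)$, i.e. $\partial_j\mathcal R_s(0)=0$.

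For the second claim, I would avoid differentiating under the integral, which would require regularity of $x\mapsto\gamma_0^s(G_s(x,\cdot))$. I would argue at the level of $\mathcal R_s$ instead. The invariance of $G_s$ under $\tau_j$ transfers to $F_s$ (trivially) and hence to $H_\Omega$, which gives $\mathcal R_s\circ\tau_j=\mathcal R_s$. Differentiating in $x_j$ then yields $\partial_j\mathcal R_s(\tau_j x)=-\partial_j\mathcal R_s(x)$, so $\partial_j\mathcal R_s$ vanishes on $H_j\cap\Omega$. This is the first claim again, now for all points of $H_j\cap\Omega$ rather than just $0$; one can also check it directly from \eqref{repres-gradient-Robin func} at any $x\in H_j$, since $\tau_jx=x$ there.

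Since $i\neq j$, the direction $e_i$ is tangent to $H_j$ and $0\in H_j$. The function $t\mapsto \partial_j\mathcal R_s(te_i)$ is therefore identically zero for small $|t|$, and differentiating at $t=0$ gives $\partial_{ij}\mathcal R_s(0)=0$. Strictly speaking this uses only the symmetry in $H_j$. The symmetry in $H_i$ gives the same conclusion with the roles exchanged, so $\partial_{ji}\mathcal R_s(0)=0$ as well, which matters if one does not want to invoke equality of mixed partials.

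The main obstacle is justifying that $\mathcal R_s$ is $C^2$, or at least that $\partial_j\mathcal R_s$ is differentiable, near $0$. The statement implicitly presumes this. I would obtain it from the fact that $H_\Omega(x,\cdot)$ is $s$-harmonic in $\Omega$ with exterior datum $F_s(x,\cdot)$, which is smooth in $x$ for $x$ away from $\R^N\setminus\Omega$. Interior regularity then gives $H_\Omega\in C^\infty$ on compact subsets of $\Omega\times\Omega$, and consequently $\mathcal R_s\in C^\infty(\Omega)$. The remaining steps, namely the invariance of $G_s$ via uniqueness and the change of variables on $\partial\Omega$, are routine.
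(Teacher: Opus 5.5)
Your proof is correct and follows the paper's route. For the first claim you combine reflection invariance of $G_s$ with \eqref{repres-gradient-Robin func} and $\nu_j(\tau_j\sigma)=-\nu_j(\sigma)$; for the second you show $\partial_j\mathcal R_s$ vanishes on $H_j\cap\Omega$ and then differentiate along the tangential direction $e_i$, which is exactly what the paper does. The paper gets that vanishing by applying the formula at each point of $H_j\cap\Omega$, and you mention this too.

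Your variations are all sound. Deriving the vanishing instead from $\mathcal R_s\circ\tau_j=\mathcal R_s$ shows that the gradient formula is not really needed for this corollary. You are right that the extra symmetry in $H_i$ is superfluous. You also sketch the $C^2$ regularity of $\mathcal R_s$, which the paper uses without comment.
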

\begin{remark}
In the case of the Laplace operator, it is known that the classical  Robin function of a convex domain in two dimensions is convex and  admits only one non-degenerate critical point, see \cite{CF} by Caffarelli and Friedman and \cite{G} for an alternative complex variable proof. Both  proofs use that, for simply connected domain, the classical  Robin function satisfies the Liouville equation, as a consequence of the Riemann conformal mapping theorem. 
Since such an equation is not available for the fractional  Robin function, the question of whether or not a similar convexity property holds as well in the fractional setting is an interesting open problem. 
In the particular case $N=1$, $\Omega=(-1,1)$ and $s=1/2$, direct computations show that $0$ is the only non-degenerate critical point of $\mathcal R_{1/2}$ since in this case the Robin function is simply given by $R_{1/2}(x)=\frac{1}{\pi}\log 2(1-x^2)$ (see, e.g \cite{Bucur}).\par\;
\end{remark}
%

\medskip

The remainder of the paper is organized as follows. In Section \ref{sec4.1} we present the proof of Theorem \ref{main-result}. Section \ref{sec4} is devoted to the proof of Theorem \ref{Thm.2}, and the last two sections are devoted to the proofs of Corollary \ref{nodeg} and of Corollary \ref{non-deg-frac-Robin}.

\section{Proof of Theorem \ref{main-result}}\label{sec4.1}

This section is devoted to the proof of Theorem \ref{main-result}.
In Section \ref{sec-scheme} we give the scheme of the proof of Theorem \ref{main-result}.
Two intermediate results are necessary in the course of the proof: Lemma \ref{Lem4.2} and Lemma \ref{Lem4.3}, which are proved, respectively, in Section \ref{sec-pr-l22} and in Section \ref{sec8}.

\subsection{Scheme of the proof}\label{sec-scheme}

The starting point of the proof of Theorem \ref{main-result} is the following identity:
\begin{equation}\label{eq:pohozaev.1}
\int_\Omega \partial_{z_i} v(-\Delta)^sw\,dz =
-\int_\Omega \partial_{z_i} w(-\Delta)^sv\,dz-\Gamma^2(1+s)\int_{\partial\Omega}\gamma_0^s(v)\gamma_0^s(w)\,\nu_i\,d\sigma.
\end{equation}
which has been established for enough regular functions $v$ and $w$ in \cite[Theorem 1.3]{DFW2023} and \cite[Theorem 1.9]{RO-S}.

Formally, to get the formula, the idea is to use $G_s(x,\cdot)$ and $u$ as test functions into \eqref{eq:pohozaev.1}. However,  for any $x\in \Omega$, the function $G_s(x,\cdot)$  is too irregular to be admissible in \eqref{eq:pohozaev.1} in both these results.
To overcome this difficulty, we approximate $G_s(x,\cdot)$ by a $C^\infty_c(\Omega)$-function of the form $\xi_k \phi^\mu_{x}G_s(x,\cdot)$ where $\xi_k$  and $\phi^\mu_{x}$ are suitable cut-off functions which vanish near the boundary $\partial\Omega$ and near the singular point $x$ of the Green function, respectively. 
More precisely, let $\rho\in C^\infty_c(-2,2)$ be such that $0\leq \rho\leq 1$ and $\rho=1$ in $(-1,1)$. Next, we fix $\delta\in C^{1,1}(\R^N)$ so that $\delta$ coincides with the signed distance function $\delta(x)=\dist(x,\R^N\setminus\Omega)-\dist(x,\Omega)$ near the boundary of $\Omega$. Moreover, we assume that $\delta$ is positive in $\Omega$ and negative in $\R^N\setminus\Omega$. Then, for any $k\in \mathbb N_*, \mu\in (0,1)$ and $x\in \Omega$, the cut-off functions $\xi_k$ and $\phi_x^\mu$ are respectively defined by: 
\begin{equation}\label{eta-k}
    \xi_k: \R^N\to \R,\; y\mapsto \xi_k(y)=1-\rho(k\delta(y)),
\end{equation}
%
\begin{equation}\label{psi-mu}
     \phi^\mu_{x}: \R^N\to \R,\; y\mapsto \phi^\mu_{x}(y)=1-\rho\Big(\frac{8}{\delta^2(x)}\frac{|x-y|^2}{\mu^2}\Big).
\end{equation}
Note that thanks to the normalizing constant $8/\delta^2(x)$, the function
\begin{equation}\label{rho-mu}
\rho^\mu_{x}(\cdot):=\rho\Big(\frac{8}{\delta^2(x)}\frac{|x-\cdot|^2}{\mu^2}\Big),
\end{equation}
is $C^\infty$ with compact support in $B_{\frac{\delta(x)}{2}}(x)\Subset\Omega$. This will be important to ensure that $\phi^\mu_{x}(\cdot)G_s(x,\cdot)$ satisfies the hypothesis of Lemma \ref{DFW} further below. 
\par\;
We have the following result regarding a double integral which involves the function $\rho$. 
It is used below in the proof of Lemma \ref{Lem4.2} and of Lemma \ref{lem-lim-G-x-y}. Its proof is given in Section \ref{ser-rad}.
\begin{Lemma}\label{Lem6.1}
Let $\rho\in C^\infty_c(-2,+2)$ such that $\rho\equiv 1$ in $(-1,+1)$ and define 

\[
a_{N,s}[\rho]:=b_{N,s}c_{N,s}\iint_{\R^N\times \R^N}\frac{\big(\rho(|y|^2)-\rho(|z|^2)\big)\big(|z|^{2s-N}-|y|^{2s-N}\big)}{|z-y|^{N+2s}}dydz\qquad\text{if $N> 2s$},
\]
and 
\[
a_{N,s}[\rho]:=-\frac{1}{\pi}c_{1,1/2}\iint_{\R\times \R}\frac{\big(\rho(|y|^2)-\rho(|z|^2)\big)\big(\log|y|-\log|z|\big)}{|z-y|^{2}}dydz \quad\text{if} \quad N=2s=1
\]

Then, for all $s\in (0,1)$, we have
\begin{equation}\label{sec7-15}
 a_{N,s}[\rho]=-2.
\end{equation}
\end{Lemma}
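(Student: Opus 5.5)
The plan is to read the double integral as the bilinear form $\mathcal E_s$ evaluated on the pair $(\rho(|\cdot|^2),F_s(0,\cdot))$, and then use that $F_s(0,\cdot)$ is the fundamental solution. Concretely, for $N>2s$, set $\psi(y):=\rho(|y|^2)\in C^\infty_c(\R^N)$ and $F(y):=b_{N,s}|y|^{2s-N}=F_s(0,y)$. The integrand is $(\psi(y)-\psi(z))(F(z)-F(y))=-(\psi(y)-\psi(z))(F(y)-F(z))$. Hence
\[
a_{N,s}[\rho]=-c_{N,s}\iint\frac{(\psi(y)-\psi(z))(F(y)-F(z))}{|y-z|^{N+2s}}\,dy\,dz=-2\,\mathcal E_s(\psi,F),
\]
with $\mathcal E_s$ understood as the absolutely convergent double integral. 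Since $(-\Delta)^sF=\delta_0$ in $\mathcal D'(\R^N)$, one expects
\[
\mathcal E_s(\psi,F)=\int_{\R^N}F\,(-\Delta)^s\psi\,dy=\langle (-\Delta)^sF,\psi\rangle=\psi(0)=\rho(0)=1,
\]
and therefore $a_{N,s}[\rho]=-2$.

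The rigorous justification goes in three steps.

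\textbf{Step 1: absolute convergence.} Near the diagonal we use $|\psi(y)-\psi(z)|\le C|y-z|$. The factor $F(y)-F(z)$ is locally integrable against this; in particular, near $0$ the singularity $|y|^{2s-N}$ is integrable. When both $y$ and $z$ lie outside $B_{\sqrt2}$, the integrand vanishes because $\psi$ is constant there. The mixed region contributes a tail of order $|z|^{-N-2s}$, which is integrable since $F$ is bounded away from $0$.

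\textbf{Step 2: symmetrization (Fubini).} Expanding the product and using the antisymmetry in $(y,z)$, we get
\[
\iint\frac{(\psi(y)-\psi(z))(F(y)-F(z))}{|y-z|^{N+2s}}=2\int F(y)\,\pv\!\int\frac{\psi(y)-\psi(z)}{|y-z|^{N+2s}}\,dz\,dy .
\]
To do this safely, truncate to $|y-z|>\varepsilon$, where each piece converges absolutely because $F\in L^1_{loc}$ and $F$ decays. Then let $\varepsilon\to0$ using dominated convergence, since the truncated operator applied to $\psi$ is bounded by $C(1+|y|)^{-N-2s}$ uniformly in $\varepsilon$. This gives $\mathcal E_s(\psi,F)=\int F(-\Delta)^s\psi$.

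\textbf{Step 3: the fundamental solution identity.} We need $\int F(-\Delta)^s\psi=\psi(0)$ for $\psi\in C^\infty_c$. By Fourier analysis, $\widehat F=|\xi|^{-2s}$ with the constant $b_{N,s}$ of \eqref{fundaC}, and this is precisely the normalization compatible with $c_{N,s}$. Hence $\int F(-\Delta)^s\psi=(2\pi)^{-N}\int\widehat\psi=\psi(0)$. I expect the main obstacle to be this normalization check, together with the integrability justification in Step 2. Since both are classical, I would cite \cite{FRO,Kwas} for the fundamental solution.

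\textbf{Case $N=2s=1$.} Here $F=-\frac1\pi\log|y|$, and the integrand becomes $(\psi(y)-\psi(z))(F(y)-F(z))$ after factoring out $-\tfrac1\pi$. Thus again $a_{1,1/2}[\rho]=-2\,\mathcal E_{1/2}(\psi,F)$. The logarithm grows, but $\psi(y)-\psi(z)$ vanishes when both points are far out, and in the mixed region $|z|^{-2}\log|z|$ is integrable, so Steps 1 and 2 go through. Step 3 uses $(-\Delta)^{1/2}(-\frac1\pi\log|\cdot|)=\delta_0$ in one dimension; this holds because $\int(-\Delta)^{1/2}\psi=0$ kills the ambiguity in the additive constant of the logarithm. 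The result is again $\psi(0)=1$, giving \eqref{sec7-15}.
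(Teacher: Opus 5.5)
For $N>2s$ your argument is correct and follows the paper's route. You show the double integral converges absolutely, symmetrize via Fubini to obtain $-2\int_{\R^N} b_{N,s}|z|^{2s-N}(-\Delta)^s(\rho\circ|\cdot|^2)\,dz$, and conclude with the fundamental-solution identity and $\rho(0)=1$.

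One small point in Step 1. For $2s\ge1$, combining $|\psi(y)-\psi(z)|\le C|y-z|$ with $|F(y)-F(z)|\le|F(y)|+|F(z)|$ leaves a non-integrable $|y-z|^{1-N-2s}$ singularity where $y,z$ are both near $0$. This is harmless: $\psi\equiv1$ on $B_1$, so the integrand vanishes on $B_1\times B_1$, and away from $0$ the function $F$ is Lipschitz. This is why the paper splits into $B_1$ and $B_4\setminus B_1$.

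The case $N=2s=1$ contains a genuine sign error. You correctly note that, with $F=-\frac1\pi\log|\cdot|$, the integrand including the prefactor $-\frac1\pi$ equals $(\psi(y)-\psi(z))(F(y)-F(z))$. But that gives
\[
a_{1,1/2}[\rho]=c_{1,1/2}\iint_{\R\times\R}\frac{(\psi(y)-\psi(z))(F(y)-F(z))}{|y-z|^{2}}\,dy\,dz=2\,\mathcal E_{1/2}(\psi,F)=+2,
\]
not $-2\,\mathcal E_{1/2}(\psi,F)$. For $N>2s$ the minus sign came from the second factor being $F(z)-F(y)$. In the logarithmic definition the orientation is reversed, and your word ``again'' hides this flip.

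A sanity check confirms it. If $\rho$ is nonincreasing on $[0,\infty)$, then $(\rho(|y|^2)-\rho(|z|^2))(\log|y|-\log|z|)\le0$ everywhere. With the negative prefactor $-c_{1,1/2}/\pi$ the quantity is therefore $\ge0$ and cannot equal $-2$.

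So the value $-2$ in this case cannot be derived from the definition as written. The logarithmic definition evidently has a sign typo: it should carry $\log|z|-\log|y|$, matching $F(z)-F(y)$ in the case $N>2s$. Only with that correction does your argument give $-2$. The paper's own proof handles $N=2s$ with ``the same argument goes through'' and does not catch this either. You should state the correction explicitly rather than assert the formula as given.
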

\par\;

We recall that $b_{N,s}$ is defined in \eqref{fundaC} and $c_{N,s} $  is defined in \eqref{defC}. 
Let us now recall the fractional product rule: 
 \begin{equation}\label{pl}
(-\Delta)^s (uv) =  v(-\Delta)^s u + u (-\Delta)^s v  -\mathcal I_s [u,v],
\end{equation}
where
\begin{align}\label{def-I-s}
\mathcal I_s [u,v] :=  c_{N,s} \, \pv\int_{\R^N} \frac{(u(\cdot)-u(y)) (v(\cdot)-v(y))}{|\cdot-y|^{N+2s}}dy .
\end{align}

The following lemma contains some important properties of the cut-off function $\phi^\mu_{x}$. These properties will be used repeatedly throughout this manuscript.

\begin{Lemma}\label{lem-ps-mu-x}Let $\phi^\mu_{x}$ be defined as above. Then the following properties hold true:
\begin{itemize}
    \item[$(i)$] For all $y\in\R^N$ we have 
    \begin{equation}\label{rescapsi}
    (-\Delta)^s\phi^\mu_{x}(y)=-\widetilde\mu^{-2s}(-\Delta)^s\big(\rho\circ |\cdot|^2\big)\big(\frac{y-x}{\widetilde\mu}\big)\;\;\text{where\;\;  $\widetilde\mu:=\frac{\delta(x)}{2\sqrt{2}}\mu$}.
\end{equation}
    \item[$(ii)$] Let $G_s(x,\cdot)$ be the Green with singularity at $x$. Then $$(-\Delta)^s\big(\phi^\mu_{x}(\cdot)G_s(x,\cdot)\big)\in L^\infty(\Omega).$$ 
    \item[$(iii)$] Let $\mathcal I_s[\cdot,\cdot]$ be the bilinear form defined in \eqref{def-I-s} and let $\xi_k$ be given by \eqref{eta-k}. Then,  for all $\varepsilon>0$, we have 
\begin{gather} \label{psi-mu-equ}
\lim_{\mu\to 0^+}
\lim_{k\to+\infty}\int_{\Omega}\xi_k^2(z)\partial_iu(z)\Bigg\{G_s(x,\cdot)(-\Delta)^s\phi^\mu_{x}-\mathcal I_s\big[\phi^\mu_{x},G_s(x,\cdot)\big]\Bigg\}dz \\ \nonumber   =\lim_{\mu\to 0^+}\lim_{k\to+\infty}\int_{B_\varepsilon(x)}\xi_k^2(z)\partial_i u(z)\Bigg\{G_s(x,\cdot)(-\Delta)^s\phi^\mu_{x}-\mathcal I_s\big[\phi^\mu_{x},G_s(x,\cdot)\big]\Bigg\}dz.
\end{gather}
\end{itemize}
Here we used the notation $\partial_i=\partial/\partial z_i$.
\end{Lemma}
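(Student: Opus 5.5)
We prove the three items separately.

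\emph{Item (i).} Write $\phi^\mu_x = 1-\rho^\mu_x$ and use that $(-\Delta)^s 1 = 0$, so $(-\Delta)^s\phi^\mu_x = -(-\Delta)^s\rho^\mu_x$. Since
\[
\frac{8}{\delta^2(x)}\frac{|x-y|^2}{\mu^2}=\Big|\frac{y-x}{\widetilde\mu}\Big|^2
\quad\text{with}\quad \widetilde\mu=\frac{\delta(x)\mu}{2\sqrt2},
\]
we have $\rho^\mu_x(y)=(\rho\circ|\cdot|^2)\big((y-x)/\widetilde\mu\big)$. The scaling law $(-\Delta)^s[w(\cdot/\lambda)](y)=\lambda^{-2s}\big((-\Delta)^s w\big)(y/\lambda)$ and translation invariance then give (i) after the change of variables $z=x+\widetilde\mu z'$ in the singular integral.

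\emph{Item (ii).} Apply the product rule \eqref{pl}:
\[
(-\Delta)^s(\phi^\mu_x G)=G\,(-\Delta)^s\phi^\mu_x+\phi^\mu_x(-\Delta)^sG-\mathcal I_s[\phi^\mu_x,G], \qquad G=G_s(x,\cdot).
\]
A cleaner route is $\phi^\mu_x G = G-\rho^\mu_x G$. Then $(-\Delta)^s G=0$ in $\Omega\setminus\{x\}$ in the classical sense, and $\rho^\mu_x G$ is supported in $B_{\delta(x)/2}(x)$.

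\begin{itemize}
\item For $y\in\Omega$ away from $B_{\widetilde\mu}(x)$, $(-\Delta)^s(\phi^\mu_x G)(y)$ is computed pointwise. The function $\phi^\mu_x G$ vanishes near $x$ and is $C^{2s+\epsilon}$ near $y$ by interior regularity of $G$ away from its pole.
\item For $y$ near $x$ (where $\phi^\mu_x G\equiv0$), the integral equals $-c_{N,s}\int \phi^\mu_x G(z)|y-z|^{-N-2s}\,dz$. This is bounded because $G\le C|x-z|^{2s-N}$ is integrable against the kernel, which stays away from its singularity on the support.
\item For $y$ on the transition annulus, and uniformly near $\partial\Omega$, we use the identity
\[
(-\Delta)^s(\phi^\mu_x G)=(-\Delta)^sG-(-\Delta)^s(\rho^\mu_x G)
\quad\text{on }\Omega\setminus \overline{B_{\widetilde\mu}(x)}.
\]
Here $(-\Delta)^s G=0$, and $(-\Delta)^s(\rho^\mu_x G)$ is bounded. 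Indeed, $\rho^\mu_x G\in L^1$ with compact support in $B_{\delta(x)/2}(x)$, so it is bounded at points far from the support. At points of the annulus it is $C^2$ locally, since there $G$ is smooth.
\end{itemize}

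The main point is that boundedness holds up to $\partial\Omega$. This follows because near $\partial\Omega$ the function equals $G$, which solves the homogeneous equation there, minus a compactly supported $L^1$ term at positive distance.

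\emph{Item (iii).} Fix $\varepsilon>0$ (small, with $B_\varepsilon(x)\Subset\Omega$) and show that the integral over $\Omega\setminus B_\varepsilon(x)$ tends to $0$.

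\begin{itemize}
\item For $\mu$ small, $\operatorname{supp}\rho^\mu_x\subset B_{\sqrt2\widetilde\mu}(x)\subset B_{\varepsilon/2}(x)$, so $\phi^\mu_x\equiv1$ on $\Omega\setminus B_\varepsilon(x)$.
\item For $z$ there, $(-\Delta)^s\phi^\mu_x(z)=-c_{N,s}\int\rho^\mu_x(w)|z-w|^{-N-2s}\,dw=O(\widetilde\mu^N)$.
\item Similarly,
\[
\mathcal I_s[\phi^\mu_x,G](z)=c_{N,s}\int \rho^\mu_x(w)\big(G(z)-G(w)\big)|z-w|^{-N-2s}\,dw .
\]
This is $O\big(\widetilde\mu^N G(z)+\int_{B_{2\widetilde\mu}}|x-w|^{2s-N}\,dw\big)=O(\widetilde\mu^{2s})$, uniformly for $z\notin B_\varepsilon(x)$.
\end{itemize}

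Hence the brace is bounded by $C(\varepsilon)\widetilde\mu^{2s}$ times $(1+G(z))$. Since $|\partial_iu|\le C\delta^{s-1}\in L^1(\Omega)$ and $0\le\xi_k\le1$, dominated convergence lets us take $k\to\infty$ first; the resulting integral is $O(\mu^{2s})\to0$.

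The obstacle I expect is the uniformity in (ii) up to the boundary, which I plan to handle via the decomposition $G-\rho^\mu_x G$ described above.
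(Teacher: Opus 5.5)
Your items (i) and (iii) follow the paper's argument. In (iii) you go slightly further and extract the explicit rate $O(\widetilde\mu^{2s})$, where the paper only uses boundedness plus pointwise convergence and dominated convergence in $\mu$.

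The genuine difference is in (ii). The paper splits $G_s(x,\cdot)=F_s(x,\cdot)-H_\Omega(x,\cdot)$ and writes
\[
\phi^\mu_x G_s(x,\cdot)=\phi^\mu_x F_s(x,\cdot)-H_\Omega(x,\cdot)+\rho^\mu_x H_\Omega(x,\cdot).
\]
The paper prints $-\rho^\mu_x H_\Omega$, a harmless sign slip. The three terms behave as follows:
\begin{itemize}
\item the first is globally smooth with bounded derivatives, because the cut-off removes the pole of the explicit kernel;
\item the second is $s$-harmonic in $\Omega$;
\item the third lies in $C^\infty_c(B_{\delta(x)/2}(x))$.
\end{itemize}
So boundedness on all of $\Omega$ follows in one line, with no case distinction.

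Your splitting $\phi^\mu_x G=G-\rho^\mu_x G$ never uses the fundamental solution or the regular part. It needs only that $G$ is smooth off its pole and solves $(-\Delta)^sG=0$ pointwise there. The price is that the identity is useless near $x$, where $(-\Delta)^s(\rho^\mu_x G)$ blows up, so you need a separate interior argument.

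That interior part should be tightened. Your bound in the second bullet is not uniform as $y$ approaches $\partial B_{\widetilde\mu}(x)$ from inside, since the distance from $y$ to the support of $\phi^\mu_x G$ goes to $0$. Either of two fixes works:
\begin{itemize}
\item Note that $\phi^\mu_x G$ is $C^\infty$ on a neighborhood of $\overline{B_{\widetilde\mu}(x)}$ (it vanishes near the pole) and is bounded and integrable on $\R^N$. Hence its fractional Laplacian is continuous, and so bounded, on that compact set.
\item Use your identity on $\Omega\setminus\overline{B_{\widetilde\mu/2}(x)}$. There $(-\Delta)^s(\rho^\mu_x G)$ is bounded by your local-$C^2$ argument, run uniformly on balls of radius $\widetilde\mu/4$. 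Reserve the kernel-separation bound for $|y-x|<\widetilde\mu/2$.
\end{itemize}

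With that adjustment your route is valid and somewhat more robust, since it would apply verbatim to Green functions without an explicit singular part. The paper's route is shorter.
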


\begin{proof}
The first item is a simple consequence of the integral definition of the operator $(-\Delta)^s$. To check the second item, we write 
\[
\phi^\mu_{x}(z)G_s(x,z)=b_{N,s} \phi^\mu_{x}(z)|x-z|^{2s-N}-H_\Omega(x,z)-\rho^\mu_{x}(z)H_\Omega(x,z),
\]
 where we recall the notation \eqref{rho-mu}. It is clear that $\phi^\mu_{x}(\cdot)|x-\cdot|^{2s-N}\in C^\infty(\R^N)\cap L^\infty(\R^N)$ and therefore $(-\Delta)^s\big(\phi^\mu_{x}(\cdot)|x-\cdot|^{2s-N}\big)\in L^\infty(\Omega)$. We also have $(-\Delta)^s H_\Omega(x,\cdot)=0$ in $\Omega$ by definition. Finally, since $\rho^\mu_{x}(\cdot)H_\Omega(x,\cdot)\in C^\infty_c(B_{\frac{\delta(x)}{2}}(x))$, one can easily check that $(-\Delta)^s\big(\rho^\mu_{x}(\cdot)H_\Omega(x,\cdot)\big)\in L^\infty(\Omega)$. To check the last item, we expand and examine, separately, the two terms corresponding to the integrals on the complement set $\Omega\cap B_\varepsilon^c(x)$.
First, 
 if $|x-z|>\varepsilon$ for some $\varepsilon>0$ fixed, then for $\mu>0$ sufficiently small, we have $\phi^\mu_{x}(z)=1$. Thus for $\mu>0$ sufficiently small, we get 
\begin{align}\label{Eq-Delta-s-psi-1}
  \frac{1}{c_{N,s}}\big|(-\Delta)^s\phi^\mu_{x}(z)\big|&=\pv\int_{\R^N}\frac{1-\phi^\mu_{x}(y)}{|z-y|^{N+2s}}dy=\int_{B_{\frac{\mu\delta(x)}{2}}(x)}\rho\Big(\big(\frac{2\sqrt{2}}{\mu\delta(x)}\big)^2|x-z|^2\Big)|z-y|^{-N-2s}dy,\nonumber\\
  &\leq \int_{B_{\frac{\varepsilon}{4}}(x)}\rho\Big(\big(\frac{2\sqrt{2}}{\mu\delta(x)}\big)^2|x-z|^2\Big)|x-y|^{-N-2s}dy,\nonumber\\
  &\leq C(\varepsilon)\int_{B_{\varepsilon/4}(x)}\rho\Big(\big(\frac{2\sqrt{2}}{\mu\delta(x)}\big)^2|x-z|^2\Big)dy\leq \widetilde C(\varepsilon).
\end{align}
 Moreover, since $\rho$ is compactly supported and bounded, then by \eqref{Eq-Delta-s-psi-1} we have $\lim_{\mu\to 0}(-\Delta)^s\phi^\mu_{x}(z)=0$ by the dominated convergence theorem. Next, since $G_s(x,\cdot)\partial_iu\in L^1(\Omega\setminus B_\varepsilon(x))$, we deduce by the dominated convergence theorem that 
\begin{align}\label{sec7-3}
    \lim_{\mu\to 0}\lim_{k\to+\infty}\int_{\Omega\setminus B_\varepsilon(x)}\xi_k^2G_s(x,\cdot)\partial_iu(-\Delta)^s\phi^\mu_{x}dz=0.
\end{align}

Similarly, if $|x-z|>\varepsilon$, then for $\mu>0$ sufficiently small we have 
\begin{align*}\label{sec7-4}
    \big|\mathcal I_s[\phi^\mu_{x},G_s(x,\cdot)](z)\big|&=\Big|\pv\int_{\R^N}\frac{(1-\phi^\mu_{x}(y))(G_s(x,z)-G_s(x,y))}{|y-z|^{N+2s}}dy\Big|\nonumber\\
    &=\Big|\int_{B_{\frac{\mu\delta(x)}{2}}(x)}\frac{\rho\Big(\big(\frac{2\sqrt{2}}{\mu\delta(x)}\big)^2|x-z|^2\Big)\Big(G_s(x,z)-G_s(x,y)\Big)}{|y-z|^{N+2s}}dy\Big|\nonumber\\
    &\leq C(\varepsilon)\int_{B_{\frac{\varepsilon}{4}}(x)}\rho\Big(\big(\frac{2\sqrt{2}}{\mu\delta(x)}\big)^2|x-z|^2\Big)\Big(1+|G_s(x,y)|\Big)dy\leq \widetilde C(\varepsilon),
\end{align*}
and $\mathcal I_s[\phi^\mu_{x},G_s(x,\cdot)](z)\to 0$ as $\mu\to 0^+$ since $\rho\in C^\infty_c(-2,+2)$ and $G_s(x,\cdot)\in L^1(\Omega)$. Finally, since $\partial_iu\in L^1(\Omega)$, we deduce by the dominated convergence theorem that
\begin{equation}\label{sec7-5}
    \lim_{\mu\to 0^+}\lim_{k\to+\infty}\int_{\Omega\setminus B_\varepsilon(x)}\xi_k^2(z)\partial_i u(z)\mathcal I_s[\phi^\mu_{x},G_s(x,\cdot)](z)dz=0.
\end{equation}
Claim \eqref{sec7-2} follows by combining  \eqref{sec7-3} and \eqref{sec7-5}.
\end{proof}

From now on, let also the integer $i$ be fixed equal to $1$ or $2$ or $\cdots $ or $N$.
 For any $k\in \mathbb N_*$, 
we use \eqref{eq:pohozaev.1} with
$\xi_k u\in C^1_c(\Omega)$ and $\xi_k\phi^\mu_{x}G_s(x,\cdot)\in C^\infty_c(\Omega)$ instead of $v$ and $w$; we arrive at the identity: 
\begin{align}\label{key-id}
     \int_{\Omega}\partial_i(\xi_k u)(-\Delta)^s\Big(\xi_k\phi^\mu_{x}G_s(x,\cdot)\Big)dz=-\int_{\Omega}\partial_i\big(\xi_k \phi^\mu_{x}G_s(x,\cdot)\big)(-\Delta)^s(\xi_ku)\,dz,
 \end{align}
 where we write $\partial_i=\frac{\partial }{\partial z_i}$.    Let us highlight that there is no boundary term since $\xi_k u$ is compactly supported. 
 On the one hand we apply the product law \eqref{pl} to 
 $v= \xi_k$ and $w=G_s(x,\cdot)\phi^\mu_{x} $ to obtain that 
 the left hand side of \eqref{key-id} also reads as
 %
 \begin{align}
     &\int_{\Omega}\partial_i(\xi_k u)(z)(-\Delta)^s\Big(\xi_k\phi^\mu_{x}G_s(x,\cdot)\Big)(z)dz\nonumber\\
     &=\int_{\Omega}\partial_i (\xi_ku)(z)\Bigg\{\phi^\mu_{x}G_s(x,\cdot)(-\Delta)^s\xi_k-\mathcal I_s\big[\xi_k, G_s(x,\cdot)\phi^\mu_{x}\big]\Bigg\}dz\nonumber\\
     &\quad+\int_{\Omega}\xi_k(z)\partial_i (\xi_ku)(z)(-\Delta)^s\Big(\phi^\mu_{x}G_s(x,\cdot)\Big)dz . \label{rhs1}
 \end{align}
 On the other hand, we  apply the product law \eqref{pl} to 
 $v=u$ and $w= \xi_k$ and rewrite the right hand side of \eqref{key-id} as
 \begin{align}
     \int_{\Omega}\partial_i\Big(\xi_k \phi^\mu_{x}G_s(x,\cdot)\Big)(-\Delta)^s(\xi_ku)\,dz\nonumber
     &=\int_{\Omega}\partial_i\Big(\xi_k \phi^\mu_{x}G_s(x,\cdot)\Big)\Big(u(-\Delta)^s\xi_k-\mathcal I_s(u,\xi_k)\Big)dz\nonumber\\
     &\quad+\int_{\Omega}\xi_k\partial_i\Big(\xi_k \phi^\mu_{x}G_s(x,\cdot)\Big)(-\Delta)^su(z)dz . \label{rhs2}
 \end{align}

Let us set for any $\mu\in(0,1)$,  for any $k\in \mathbb N*$, 
\begin{align*}
 A^k_{\mu}(x) &:= \int_{\Omega}\xi_k(z)\partial_i\big(\xi_k u\big)(z)(-\Delta)^s\Big(\phi^\mu_{x}G_s(x,\cdot)\Big)(z)dz, 
\\B^k_{\mu} (x)&:=  \int_{\Omega}\partial_i (\xi_ku)(z)\Bigg\{\phi^\mu_{x}G_s(x,\cdot)(-\Delta)^s\xi_k-\mathcal I_s\big[\xi_k, \phi^\mu_{x} G_s(x,\cdot)\big]\Bigg\}dz,
\\ C^k_{\mu}(x) &:= \int_{\Omega}\xi_k\partial_i\Big(\xi_k \phi^\mu_{x}G_s(x,\cdot)\Big)(-\Delta)^su(z)dz,
\\ D^k_{\mu}(x) &:=  \int_{\Omega}\partial_i\Big(\xi_k \phi^\mu_{x}G_s(x,\cdot)\Big)\Big\{u(-\Delta)^s\xi_k-\mathcal I_s[u,\xi_k]\Big\}dz . \end{align*}
 Therefore, combining \eqref{key-id},  \eqref{rhs1} and  \eqref{rhs2}, we arrive at 
\begin{equation} \label{all}
    A^k_{\mu}(x) + C^k_{\mu}(x) = - B^k_{\mu}(x) -D^k_{\mu}(x)  .
\end{equation}
Note that all the quantities above are well defined. The rest of the proof of Theorem \ref{main-result}  consists in passing to the limit, as $\mu\to 0^+$ and $k\to+\infty$, in all the terms of \eqref{all}.

For the first terms in these r.h.s. we apply the following result from  \cite[Proposition 2.4]{DFW} or \cite[Proposition 2.2]{DFW2023}. 

\begin{Lemma}\label{DFW}
Let $\Omega$ be a bounded open set of class $C^{1,1}$ and $v$, $w$ be such that $v\equiv 0\equiv w$ in $\R^N\setminus\Omega$. Moreover, assume that $w\in C^s(\R^N)$, $v/\delta^s, w/\delta^s\in C^\beta(\overline{\Omega})$ and 
 \begin{equation}\label{Gredient-estimate-FallJarohs}
\delta^{1-\beta}\nabla\big(v/\delta^s\big) \;\;\text{is bounded near the boundary $\partial\Omega$}\; \text{for some}\; \beta\in (0,1).
\end{equation}
 Then for all $Y\in C^{0,1}(\R^N,\R^N)$, 
 \begin{align}\label{DFW-id}
     &\lim_{k\to+\infty}\int_{\Omega}\nabla(\xi_k v)\cdot Y \Big[w(-\Delta)^s\xi_k-\mathcal I_s\big[\xi_k, w\big]\Big]dz
     =\frac{\Gamma^2(1+s)}{2}\int_{\partial\Omega}\gamma_0^s(v)\gamma_0^s(w)Y\cdot\nu d\sigma
 \end{align}
where $\nu$ is the outward unit normal to the boundary.
\end{Lemma}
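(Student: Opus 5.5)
The plan is to show that the bracket $w(-\Delta)^s\xi_k-\mathcal I_s[\xi_k,w]$ concentrates in a boundary layer of width $1/k$, and that after blowing up this layer the limit reduces to a one-dimensional half-line computation, which produces the constant $\Gamma^2(1+s)/2$.

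\emph{Step 1 (rewriting the bracket).} By the product rule \eqref{pl},
$$w(-\Delta)^s\xi_k-\mathcal I_s[\xi_k,w]=(-\Delta)^s(\xi_k w)-\xi_k(-\Delta)^s w.$$
Since $\xi_k\equiv 1$ away from $\{0<\delta<2/k\}$, and $\xi_k w$ differs from $w$ only near the boundary, the integrand is small away from the layer; this is where the tail terms will be estimated. Write $v=\delta^s\psi$ and $w=\delta^s\varphi$ with $\psi,\varphi\in C^\beta(\overline\Omega)$. Then
$$\nabla(\xi_k v)=\xi_k\nabla v+v\nabla\xi_k,$$
where $\nabla\xi_k=-k\rho'(k\delta)\nabla\delta$ and $\nabla\delta=-\nu$ on $\partial\Omega$. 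Using \eqref{Gredient-estimate-FallJarohs}, I will write
$$\nabla v=s\delta^{s-1}\psi\nabla\delta+O(\delta^{s-1+\beta}).$$
Hence the leading part of $\nabla(\xi_k v)\cdot Y$ is
$$\psi\,(Y\cdot\nabla\delta)\,\frac{d}{d\delta}\big[(1-\rho(k\delta))\delta^s\big].$$
The remainder carries an extra factor $\delta^\beta$, and should vanish in the limit once we have a bound of the form $|(-\Delta)^s(\xi_k w)-\xi_k(-\Delta)^s w|\lesssim k^{s}\,\Theta(k\delta)$ with $\Theta$ integrable against $t^{s-1+\beta}$. I will prove this bound by splitting the defining integral into near and far regions, using $w\in C^s$ and the $C^{1,1}$ regularity of $\delta$.

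\emph{Step 2 (blow-up).} Use coordinates $z=\sigma-\tfrac{t}{k}\nu(\sigma)$ near $\partial\Omega$, with $\sigma\in\partial\Omega$ and $t>0$. Freezing $\varphi(\sigma)$ and $\psi(\sigma)$, I expect
$$\xi_k w\approx k^{-s}\varphi(\sigma)\,g(t),\qquad g(t):=(1-\rho(t))\,t_+^s.$$
Rescaling shows that $(-\Delta)^s(\xi_k w)-\xi_k(-\Delta)^s w$ is approximately
$$k^{s}\varphi(\sigma)\Big[(-\Delta)^s_{\R}g-(1-\rho)(-\Delta)^s_{\R}t_+^s\Big](t)=k^s\varphi(\sigma)(-\Delta)^s_{\R}g(t),$$
because $(-\Delta)^s t_+^s=0$ on $t>0$. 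Here the $N$-dimensional operator reduces to the one-dimensional one for functions of one variable, with the normalization implied by \eqref{defC}. The gradient factor is $k^{1-s}\psi(\sigma)g'(t)$, and $dz=k^{-1}\,dt\,d\sigma$ up to $O(1/k)$. The powers of $k$ then cancel exactly, and I obtain
$$\lim_{k\to\infty}\ldots=\Big(-\int_0^\infty g'(t)\,(-\Delta)^s g(t)\,dt\Big)\int_{\partial\Omega}\psi\varphi\, Y\cdot\nu\,d\sigma.$$
Justifying this via dominated convergence is the main obstacle. It requires uniform decay of $(-\Delta)^s g$ as $t\to\infty$ (like $t^{-1-s}$, since $g=t^s$ there), errors from the curvature of $\partial\Omega$ and the $C^\beta$ oscillation of $\varphi,\psi$ that are $o(1)$, and control of the nonlocal interaction between distant boundary points. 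I would first handle the half-space case with $\varphi,\psi$ constant, and then treat the general case by a partition of unity and flattening, bounding the commutator errors by $k^{-\beta}$.

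\emph{Step 3 (the constant).} It remains to show $-\int_0^\infty g'(-\Delta)^s g\,dt=\Gamma^2(1+s)/2$, independently of $\rho$. I would apply the one-dimensional Pohozaev identity \eqref{eq:pohozaev.1} on a large interval $(0,R)$, or argue directly via the known identity
$$\int_{\R}g'(-\Delta)^s g=-\tfrac{1}{2}\Gamma^2(1+s)\lim g/t^s\big|_{\text{boundary}}$$
for $s$-homogeneous profiles. Independence of $\rho$ follows by differentiating in $\rho$ and using that $(-\Delta)^s t_+^s=0$. As a check, the same constant must arise from \eqref{eq:pohozaev.1} on the ball, which pins down $\Gamma^2(1+s)/2$.
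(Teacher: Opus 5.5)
Your plan is essentially the argument behind the cited result \cite[Proposition 2.4]{DFW}, which the paper imports without proof. That argument concentrates the bracket in the layer $\{1/k<\delta<2/k\}$, blows up in tubular coordinates to the profile $g=(1-\rho)t_+^s$, and reads off the universal constant $-\int_0^\infty g'(-\Delta)^s g\,dt$; your sign and value $\Gamma^2(1+s)/2$ are right. Three small adjustments:
\begin{itemize}
\item Work with the exact identity $w(-\Delta)^s\xi_k-\mathcal I_s[\xi_k,w]=c_{N,s}\,\pv\int_{\R^N}\frac{(\xi_k(z)-\xi_k(y))\,w(y)}{|z-y|^{N+2s}}\,dy$. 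Do not split off $\xi_k(-\Delta)^s w$, which need not exist pointwise when $w$ is only $C^s$.
\item The decay is really $k^s(k\delta)^{-1-2s}$, not $t^{-1-s}$. With that bound, dominated convergence and mere continuity of $\psi,\varphi$ suffice, so no $k^{-\beta}$ commutator rates are needed.
\item Fix the constant with the Ros-Oton--Serra identity \cite{RO-S}. For example, apply it to $g(t)(1-t/R)_+^s$ on $(0,R)$ and let $R\to\infty$. Do not use \eqref{eq:pohozaev.1} in the form of \cite{DFW2023}, which is itself derived from this lemma.
\end{itemize}
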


We apply this result to both the couples 
$(v,w) =( u ,\phi^\mu_{x}G_s(x,\cdot) )$
and 
$(v,w) =( \phi^\mu_{x}G_s(x,\cdot) , u)$ with $Y=e_i=(0,\cdots 0,1,0,\cdots,0)$, and then pass to the limit as $\mu\to 0^+$
to obtain that %
\begin{align} \label{B}
\lim_{\mu\to 0^+}\lim_{k\to+\infty} B^k_{\mu}(x) &= \frac{\Gamma^2(1+s)}{2}\int_{\partial \Omega} \gamma_0^s(u)(z)\gamma_0^s(G_s(x, \cdot))(z) \nu_i(z)d\sigma(z) ,
    \\ \label{D}
    \lim_{\mu\to 0^+}\lim_{k\to+\infty}  D^k_{\mu}(x) &= \frac{\Gamma^2(1+s)}{2}\int_{\partial \Omega} \gamma_0^s(u)(z) \gamma_0^s(G_s(x,\cdot))(z) \nu_i(z)d\sigma(z) .
\end{align}
\par\;

We note that the couple $(u,\phi^\mu_{x}G_s(x,\cdot))$ satisfies the hypothesis of the Lemma above by regularity theory (see \cite{RS, FallSven}). Now, regarding 
 the passage to the limit of $ A^k_{\mu}(x)$ and $C^k_{\mu}(x)$ 
 we have the following pair of results which are, respectively, proved in Section \ref{sec-pr-l22} and in Section \ref{sec8}.
 \begin{Lemma}\label{Lem4.2}
For all $x\in \Omega$,  
  \begin{equation} \label{Lem4.2-id}
\lim_{\mu\to 0^+}\lim_{k\to+\infty} A^k_{\mu}(x)
=
\partial_{x_i} u(x).
\end{equation}
 \end{Lemma}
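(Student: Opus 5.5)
We plan to compute $A^k_\mu(x)$ by first sending $k\to+\infty$ and then $\mu\to0^+$. Write $g_\mu:=\phi^\mu_{x}G_s(x,\cdot)$.

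\textbf{Limit in $k$.} By Lemma \ref{lem-ps-mu-x}(ii), $(-\Delta)^s g_\mu\in L^\infty(\Omega)$. Since $\xi_k\partial_i(\xi_k u)=\xi_k^2\partial_iu+\xi_k u\,\partial_i\xi_k$, we treat the two pieces separately.
\begin{itemize}
\item The first piece converges to $\int_\Omega\partial_iu\,(-\Delta)^s g_\mu\,dz$ by dominated convergence. Here we use $\delta^{1-s}\nabla u\in L^\infty$, so $\partial_i u\in L^1(\Omega)$.
\item For the second piece, $|\partial_i\xi_k|\lesssim k$ is supported where $\delta\sim 1/k$, and $|u|\lesssim\delta^s$ there. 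Hence $\int|u\,\partial_i\xi_k|\lesssim k\cdot k^{-s}\cdot k^{-1}\to0$.
\end{itemize}
Thus
\[
\lim_{k\to\infty}A^k_\mu(x)=\int_\Omega \partial_iu\,(-\Delta)^s g_\mu\,dz .
\]

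\textbf{Expansion in $\mu$.} Next we expand $(-\Delta)^s g_\mu$ with the product rule \eqref{pl}. Since $(-\Delta)^sG_s(x,\cdot)=0$ in $\Omega\setminus\{x\}$ and $\phi^\mu_x$ vanishes near $x$, in $\Omega$ we get
\[
(-\Delta)^s g_\mu = G_s(x,\cdot)(-\Delta)^s\phi^\mu_{x}-\mathcal I_s[\phi^\mu_{x},G_s(x,\cdot)] .
\]
This identity should be checked pointwise: away from $x$ the function $g_\mu$ agrees with $G_s(x,\cdot)$ near the evaluation point, or alternatively one can use the splitting from the proof of Lemma \ref{lem-ps-mu-x}(ii). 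By Lemma \ref{lem-ps-mu-x}(iii), only the contribution from $B_\varepsilon(x)$ survives as $\mu\to0$, for any fixed $\varepsilon$. On $B_\varepsilon(x)$ we write $\partial_iu(z)=\partial_iu(x)+O(\omega(\varepsilon))$, using the continuity of $\nabla u$ in the interior ($u\in C^1_{loc}$). We also split $G_s(x,\cdot)=F_s(x,\cdot)-H_\Omega(x,\cdot)$, where $H_\Omega(x,\cdot)$ is smooth near $x$. The $H_\Omega$ part contributes
\[
-\int_\Omega\partial_iu\,(-\Delta)^s(\phi^\mu_x H_\Omega(x,\cdot))+\ldots,
\]
which tends to $0$. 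Indeed, $\phi^\mu_x H_\Omega\to H_\Omega$ and $(1-\phi^\mu_x)H_\Omega=\rho^\mu_xH_\Omega$ is a smooth bump shrinking to the point $x$. Its fractional Laplacian is $O(\tilde\mu^{-2s})$ on a set of measure $\tilde\mu^N$, and it decays like $\tilde\mu^N|z-x|^{-N-2s}$ outside, so its integral against a locally bounded $\partial_iu$ is $O(\tilde\mu^{N-2s})\to0$ (because $N>2s$).

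\textbf{Main step: the $F_s$ part.} By scaling $z=x+\tilde\mu y$ (Lemma \ref{lem-ps-mu-x}(i)) and homogeneity of $F_s$, the quantity
\[
\int_{\R^N}\Big(F_s(x,\cdot)(-\Delta)^s\phi^\mu_x-\mathcal I_s[\phi^\mu_x,F_s(x,\cdot)]\Big)dz
\]
is independent of $\mu$. We then identify this integral via the bilinear form. Integrating $\mathcal I_s[\phi,F]$ over $\R^N$ and symmetrizing gives $\iint(\phi(y)-\phi(z))(F(y)-F(z))|y-z|^{-N-2s}$. Combined with $\int F(-\Delta)^s\phi=\int\phi(-\Delta)^sF$, formally $-\int(1-\phi)\delta_x$, this yields $-\tfrac12 a_{N,s}[\rho]$ plus correction terms. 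The aim is to show the total equals $1$, which is where Lemma \ref{Lem6.1} ($a_{N,s}[\rho]=-2$) enters.

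I expect this to be the main obstacle, for two reasons. First, the careful bookkeeping of the constant and sign: the limit must be exactly $\partial_iu(x)$. Second, justifying integrability of the tails. The integrand decays like $|z|^{-N-2s}\cdot|z|^{2s-N}$ at infinity, so the restriction from $\R^N$ to $B_\varepsilon(x)$ costs $O(\tilde\mu^{\,N})$-type errors that vanish. Near $x$, the kernel $F_s$ is integrable against the bump term. Finally we let $\mu\to0$ and then $\varepsilon\to0$ to absorb the $O(\omega(\varepsilon))$ errors, which requires the uniform $L^1$ bound on $|G(-\Delta)^s\phi^\mu_x|+|\mathcal I_s|$ furnished by the scaling.
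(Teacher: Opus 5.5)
Your outline follows the paper's proof step by step. The boundary piece is killed by $|u|\le C\delta^s$ (your $k^{-s}$ bound is the paper's \eqref{sec7-19}). The product rule uses $(-\Delta)^sG_s(x,\cdot)=0$ away from $x$, Lemma~\ref{lem-ps-mu-x}(iii) localizes to $B_\varepsilon(x)$, and you rescale about $x$. Your $\omega(\varepsilon)$ argument replaces the paper's dominated convergence with $w(x-\widetilde\mu z)\to w(x)$. You remove $H_\Omega$ through $-(-\Delta)^s(\rho^\mu_x H_\Omega(x,\cdot))$, which is $O(\widetilde\mu^{N-2s})$ in $L^1(B_\varepsilon(x))$. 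That is a tidy alternative to the paper's term-by-term bounds (continuity of $H_\Omega$ in the first term, a Lipschitz bound in the $\mathcal I_s$ term). Both variants work.

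What is actually missing is the step you flag as the main obstacle. It is exactly the paper's Lemma~\ref{ess-convergence-result}, and it closes with no correction terms. After rescaling, \eqref{sec7-8} gives $\int_{\R^N}F_s(x,\cdot)(-\Delta)^s\phi^\mu_x\,dz=-\int_{\R^N}b_{N,s}|y|^{2s-N}(-\Delta)^s(\rho\circ|\cdot|^2)(y)\,dy=-\rho(0)=-1$. Lemma~\ref{Lem6.1} gives $\int_{\R^N}\mathcal I_s[\phi^\mu_x,F_s(x,\cdot)]\,dz=a_{N,s}[\rho]=-2$. So the total is $-1-(-2)=1$. This is indeed your $-\tfrac12 a_{N,s}[\rho]$, because $-1=\tfrac12 a_{N,s}[\rho]$ is precisely the integration by parts that proves Lemma~\ref{Lem6.1}.

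Be careful with the identity $\int F(-\Delta)^s\phi=\int\phi(-\Delta)^sF$ as you wrote it. For $\phi=\phi^\mu_x$, which tends to $1$ at infinity, it fails: the right side would formally be $\phi^\mu_x(x)=0$. It must be applied to $\rho^\mu_x=1-\phi^\mu_x\in C^\infty_c$. That is what your ``$-\int(1-\phi)\delta_x$'' really means, and \eqref{sec7-8} makes it rigorous.

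The uniform $L^1$ bound you need comes from two facts: $|y|^{2s-N}(-\Delta)^s(\rho\circ|\cdot|^2)\in L^1(\R^N)$, and the absolute convergence of the double integral shown in the proof of Lemma~\ref{Lem6.1}. The tail of the $\mathcal I_s$-term outside $B_\varepsilon(x)$ is only $O(\widetilde\mu^{2s})$, not $O(\widetilde\mu^{N})$, but it still vanishes as $\mu\to0^+$.
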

\begin{Lemma}\label{Lem4.3}
  For  all $x\in\Omega$, we have 
    \begin{align}\label{Lem4.3-id-1}
  \lim_{\mu\to 0^+}\lim_{k\to+\infty} 
 \mathcal  C_{k,\mu}(x)
  =
  \int_{\Omega}\partial _{z_i}G_s(x,\cdot) (-\Delta)^su\,dz=\int_{\Omega}\partial_{z_i} G_s(x,z) f(z,u(z))\,dz,\quad\text{if}\;\; 2s>1,
  \\\label{Lem4.3-id-2}
  \lim_{\mu\to 0^+}\lim_{k\to+\infty} 
 \mathcal  C_{k,\mu}(x)
  = -\int_\Omega  G_s(x,\cdot)\Bigg\{\partial_{h_i}f(z,u(z))+\partial_q f(z,u(z))\partial_{z_i} u(z)\Bigg\}dz,\quad\text{if}\;\; 2s\leq 1.
  \end{align}
  In the case $f\equiv \text{const}$, we have 
  \begin{align}\label{Lem4.3-id-3}
     \lim_{\mu\to 0^+}\lim_{k\to+\infty} 
 \mathcal  C_{k,\mu}(x)=0\quad\text{for all}\, s\in (0,1).
  \end{align}
\end{Lemma}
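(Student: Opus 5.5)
We must prove Lemma \ref{Lem4.3}, i.e.\ compute the double limit of
\[
C^k_\mu(x)=\int_\Omega \xi_k\,\partial_i\big(\xi_k\phi^\mu_x G_s(x,\cdot)\big)\,f(z,u(z))\,dz ,
\]
where we have used $(-\Delta)^s u=f(\cdot,u)$ in $\Omega$. First expand by the product rule:
\[
C^k_\mu(x)=\int_\Omega \xi_k^2\,\partial_i\big(\phi^\mu_x G_s(x,\cdot)\big)f\,dz+\int_\Omega \xi_k\,\partial_i\xi_k\,\phi^\mu_x G_s(x,\cdot)\,f\,dz .
\]
For fixed $\mu$ the function $\phi^\mu_x G_s(x,\cdot)$ vanishes near $x$ and behaves like $\delta^s$ near $\partial\Omega$, with gradient of size $\delta^{s-1}$ (boundary regularity \cite{RS}). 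Also $f(\cdot,u)$ is bounded.

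\textbf{Case $2s>1$, limit in $k$.} In the first term, $\xi_k^2\to 1$ pointwise and $\delta^{s-1}\in L^1(\Omega)$. Dominated convergence therefore gives $\int_\Omega\partial_i(\phi^\mu_xG_s(x,\cdot))f\,dz$. In the second term, $|\partial_i\xi_k|\le Ck\,\mathbf 1_{\{k^{-1}<\delta<2k^{-1}\}}$ and $G_s\lesssim\delta^s$. The integrand is then bounded by $Ck\cdot k^{-s}$ on a layer of measure $O(k^{-1})$, so this term is $O(k^{-s})\to 0$.

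\textbf{Case $2s>1$, limit in $\mu$.} Write $\partial_i(\phi^\mu_x G)=\phi^\mu_x\partial_iG+G\,\partial_i\phi^\mu_x$. The first piece converges to $\int_\Omega\partial_{z_i}G_s(x,z)f\,dz$ by dominated convergence, since $\partial_{z_i}G_s(x,\cdot)\in L^1(\Omega)$ when $2s>1$ (appendix / \cite{Bogdan2011}). For the second piece, $|\partial_i\phi^\mu_x|\lesssim\mu^{-1}$ on the annulus $|z-x|\sim\mu$, where $G_s\lesssim|x-z|^{2s-N}$. The piece is thus $O(\mu^{-1}\mu^{2s-N}\mu^N)=O(\mu^{2s-1})\to0$. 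This yields \eqref{Lem4.3-id-1}.

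\textbf{Case $2s\le1$.} Now $\partial_iG_s(x,\cdot)$ need not be integrable near $\partial\Omega$, so we integrate by parts before passing to the limit. Since $\xi_k^2\phi^\mu_xG_s(x,\cdot)$ is compactly supported in $\Omega$ and $f(\cdot,u)\in C^1_{loc}(\Omega)$ by \eqref{H} and $u\in C^1_{loc}$, we have
\[
C^k_\mu=-\int_\Omega \xi_k\phi^\mu_xG\,\partial_{z_i}[f(z,u)]\,dz-\int_\Omega \xi_k\,\partial_i\xi_k\,\phi^\mu_xG\,f\,dz ,
\]
using $\xi_k\partial_i(\xi_k w)=\partial_i(\xi_k^2 w)-\xi_k w\,\partial_i\xi_k$. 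The last term is $O(k^{-s})$ exactly as above. In the first term,
\[
\partial_{z_i}[f(z,u)]=\partial_{h_i}f+\partial_qf\,\partial_iu ,
\]
with $|\partial_iu|\lesssim\delta^{s-1}$. Since $G\lesssim\delta^s\min(1,|x-z|^{2s-N})$, the product $G\,\delta^{s-1}$ is integrable, and dominated convergence in $k$ and then in $\mu$ gives \eqref{Lem4.3-id-2}.

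\textbf{Case $f$ constant.} When $f\equiv\text{const}$, this same computation gives $\partial_{z_i}f=0$, so the limit is $0$ for every $s$, which is \eqref{Lem4.3-id-3}.

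\textbf{Main obstacle.} The delicate step is justifying the uniform bounds $G_s(x,z)\lesssim\delta^s(z)$ and $|\nabla_zG_s(x,z)|\lesssim\delta^{s-1}(z)$ away from $x$, together with $|\nabla u|\lesssim\delta^{s-1}$. These come from the boundary regularity results of \cite{RS} applied to $G_s(x,\cdot)$ on $\Omega\setminus B_{\delta(x)/4}(x)$, where its fractional Laplacian vanishes. I expect to handle this by invoking those estimates directly, using \eqref{eq:greenfunct-estimate} for the near-$x$ behavior.
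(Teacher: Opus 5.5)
Your proof is correct and follows the paper's argument essentially step by step. For $2s>1$ you use the same Leibniz splitting: dominated convergence via $\partial_{z_i}G_s(x,\cdot)\in L^1(\Omega)$, the $O(\mu^{2s-1})$ estimate on the annulus where $\nabla\phi^\mu_x\neq 0$, and the boundary-layer term vanishing like $O(k^{-s})$. For $2s\le 1$ and for constant $f$ you use the same integration by parts moving $\partial_i$ onto $f(\cdot,u)$.

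There are two harmless slips. First, for $2s\le 1$ the obstruction is at the pole, where $|\nabla_z G_s(x,z)|\sim |x-z|^{2s-N-1}$, not at $\partial\Omega$. Near $\partial\Omega$ one has $|\nabla_z G_s(x,z)|\lesssim\delta^{s-1}(z)\in L^1(\Omega)$, exactly as you used in your own $k$-limit. Second, your majorant should read $G_s(x,z)\le C_x\,\delta^s(z)\max(1,|x-z|^{2s-N})$ rather than with $\min$; this corrected bound still makes $G_s(x,\cdot)\,\delta^{s-1}$ integrable.
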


 Therefore, combining \eqref{all},  \eqref{B}, \eqref{D}, \eqref{Lem4.2-id},  \eqref{Lem4.3-id-1} and \eqref{Lem4.3-id-2}, we arrive at \eqref{repres-partial-deriv} and \eqref{repres-partial-deriv-1}. Up to the
proofs of Lemma \ref{Lem4.2} and Lemma \ref{Lem4.3}
the proof of Theorem \ref{main-result} is thus given.


\subsection{Proof of Lemma \ref{Lem6.1}}
\label{ser-rad} 
\begin{proof}
We first consider the case $N> 2s$. We start by showing that $a_{N,s}[\rho]$ is well defined. For that aim, we let $B_r$ be the Euclidean ball centered on $0$ with radius $r>0$. We decompose the integral into two parts, depending on whether both $x$ and $y$ belong to the ball $B_4$ or if only one of them does:
\begin{align}
  &\iint_{\R^N\times \R^N}\frac{\big(\rho(|y|^2)-\rho(|z|^2)\big)\big(|z|^{2s-N}-|y|^{2s-N}\big)}{|z-y|^{N+2s}}dydz\nonumber\\
  &=\iint_{B_4\times B_4}\frac{\big(\rho(|y|^2)-\rho(|z|^2)\big)\big(|z|^{2s-N}-|y|^{2s-N}\big)}{|z-y|^{N+2s}}dydz\label{Lem.3.2-1}\\ 
  &\;\;+ 2 \int_{B_2}\rho(|z|^2) \Big(\int_{\R^N\setminus B_4}\frac{|z|^{2s-N}-|y|^{2s-N}}{|z-y|^{N+2s}}dy \Big) dz\nonumber
\end{align}
Since 
$$\int_{\R^N}|z|^{2s-N}(1+|z|^{N+2s})^{-1}dz<\infty,$$
it is clear that the last integral is finite. To continue the proof, we distinguish two cases. 
\par\;
\underline{Case 1}: $2s< 1$. In this case, we estimate
\begin{align}
   &\Bigg| \iint_{B_4\times B_4}\frac{\big(\rho(|y|^2)-\rho(|z|^2)\big)\big(|z|^{2s-N}-|y|^{2s-N}\big)}{|z-y|^{N+2s}}dydz\Bigg|\nonumber\\
   &\leq C \iint_{B_4\times B_4}\frac{|z|^{2s-N}+|y|^{2s-N}}{|z-y|^{N-(1-2s)}}dydz<\infty.
\end{align}
\underline{Case 2}: $2s\geq 1$.
In this case we decompose the integral in \eqref{Lem.3.2-1} into two parts, depending on whether both $x$ and $y$ do not belong to the ball $B_1$ or if only one of them does:
\begin{align}
  &\iint_{B_4\times B_4}\frac{\big(\rho(|y|^2)-\rho(|z|^2)\big)\big(|z|^{2s-N}-|y|^{2s-N}\big)}{|z-y|^{N+2s}}dydz\nonumber\\
  &=\iint_{(B_4\setminus B_1)\times (B_4\setminus B_1)}\frac{\big(\rho(|y|^2)-\rho(|z|^2)\big)\big(|z|^{2s-N}-|y|^{2s-N}\big)}{|z-y|^{N+2s}}dydz \label{Lem.3.2-2}\\ \label{Lem.3.2-3}
  &\;\;+2\iint_{B_1\times (B_4\setminus B_1)}\frac{\big(\rho(|y|^2)-\rho(|z|^2)\big)\big(|z|^{2s-N}-|y|^{2s-N}\big)}{|z-y|^{N+2s}}dydz.
\end{align}
Since $z\mapsto|z|^{2s-N}$ is $C^\infty(B_4\setminus B_1)$, by Taylor expansion we easily check that the integral in \eqref{Lem.3.2-2} is finite.  Next, we recall the following elementary estimate: 
for any $b, c\geq 0$ and $b\neq 0$, we have 
\begin{equation}\label{elem-id}
|b^\beta-c^\beta|\leq (b^{\beta-1}/|\beta|
)\max\big(1,(c/b)^\beta\big)|b-c|.
\end{equation}
We apply this to $\beta=2s-N$, $b=|z|$ and $c=|y|$ yields
\begin{equation}\label{es-1}
  \big||z|^{2s-N}-|y|^{2s-N}\big|\leq C |y|^{2s-N-1}|z-y|\quad\text{for $y\in B_1\;\;\&\;\;z\in B_4\setminus B_1$.}  
\end{equation}
Now using \eqref{es-1} and that $\rho\in C^\infty_c(\R)$, it is straightforward to check the integral in \eqref{Lem.3.2-3} is finite provided that $2s>1$. It remains the case 
 $s=1/2$. For this we used the estimate: for $b>1$ and $\beta\in (0,1)$ there holds:  
\begin{align*}
    \frac{1-b^{1-N}}{N-1}=\int_1^b t^{-N}dt&\leq \Big(\int_1^bdt\Big)^\beta\Big(\int_1^b t^{-\frac{N}{1-\beta}}dt\Big)^{1-\beta}\nonumber\\
    &\leq \big(b-1\big)^\beta\Big(\int_1^\infty t^{-\frac{N}{1-\beta}}\Big)^{1-\beta}\nonumber\\
    &\leq \big(b-1\big)^\beta\Big(\frac{1-\beta}{N+\beta-1}\Big)^{1-\beta}.
\end{align*}
We apply this to $\frac{|z|}{|y|}$ with $y\in B_1$ and $z\in B_4\setminus B_1$ gives 
\begin{align*}
    &\Big||y|^{1-N}-|z|^{1-N}\Big|\\
    &=|y|^{1-N}\Big|1-\Big(\frac{|z|}{|y|}\Big)^{1-N}\Big|\\
    &\leq  (N-1)|y|^{1-N}\Big(\frac{|z|}{|y|}-1\Big)^\beta\Big(\frac{1-\beta}{N+\beta-1}\Big)^{1-\beta}\nonumber\\
    &\leq C|y|^{1-N-\beta}|z-y|^\beta.
\end{align*}
It follows that 
\begin{align}
&\iint_{B_1\times (B_4\setminus B_1)}\frac{\big(\rho(|y|^2)-\rho(|z|^2)\big)\big(|z|^{1-N}-|y|^{1-N}\big)}{|z-y|^{N+1}}dydz\nonumber\\
&\leq C\int_{B_1}|y|^{1-N-\beta}\int_{B_4\setminus B_1}\frac{dz}{|y-z|^{N-\beta}}dy< \infty.
\end{align}
In conclusion, we have 
\begin{align}
   \iint_{\R^N\times \R^N}\frac{\big(\rho(|y|^2)-\rho(|z|^2)\big)\big(|z|^{2s-N}-|y|^{2s-N}\big)}{|z-y|^{N+2s}}dydz<\infty.
\end{align}
Then, by Fubini's theorem,  we have that 
\begin{align}
&b_{N,s}c_{N,s}\iint_{\R^N\times \R^N}\frac{\big(\rho(|y|^2)-\rho(|z|^2)\big)\big(|z|^{2s-N}-|y|^{2s-N}\big)}{|z-y|^{N+2s}}dydz\nonumber\\
&=-2\int_{\R^N}b_{N,s}|z|^{2s-N}(-\Delta)^s(\rho\circ|\cdot|^2)dz=-2,
\end{align}
by using  an integration by parts over the full space $\R^N$,  that $b_{N,s}|z|^{2s-N}$ is the fundamental solution of $(-\Delta)^s$ in $\R^N$ and the value of $\rho(0)$. This finishes the proof for $N>2s$. 

For $N=2s$, the same argument goes through except that in here, to estimate the problematic term, which is  
\[
A:=\iint_{B_1\times (B_4\setminus B_1)}\frac{\big(\rho(|y|^2)-\rho(|z|^2)\big)\big(\log|z|-\log|y|\big)}{|z-y|^{2}}dydz,
\]
we simply write 
\begin{align*}
    A&=\iint_{B_{1/2}\times (B_4\setminus B_1)}\frac{\big(\rho(|y|^2)-\rho(|z|^2)\big)\big(\log|z|-\log|y|\big)}{|z-y|^{2}}dydz\\
    &+\iint_{(B_1\setminus B_{1/2})\times (B_4\setminus B_1)}\frac{\big(\rho(|y|^2)-\rho(|z|^2)\big)\big(\log|z|-\log|y|\big)}{|z-y|^{2}}dydz
\end{align*}
It is clear that the first integral is finite since $|z-y|>1/2$. The second integral is also finite since $z\mapsto\log|z|$ is Lipschitz away from the origin. The proof is  therefore finished.
\end{proof}
\subsection{Proof of Lemma \ref{Lem4.2}}
\label{sec-pr-l22}

We shall need the following result in the proof of Lemma \ref{Lem4.2} below.
\begin{Lemma}\label{ess-convergence-result} Let $N> 2s$, $w\in C(\Omega)$ and $x\in\Omega$ be fixed. Let $G_s(x,\cdot)$ be the Green function with singularity at $x$ and $\phi^\mu_{x}$ be defined as in \eqref{psi-mu}. Then for all $\varepsilon>0$ such that $\overline{B_{2\varepsilon}(x)}\subset\Omega$, we have 
\begin{align}
    \lim_{\mu\to 0^+}\int_{B_\varepsilon(x)}w(z)G_s(x,z)(-\Delta)^s\phi^\mu_{x}(z)dz=-w(x),\label{fl}\\
    \lim_{\mu\to 0^+}\int_{B_\varepsilon(x)}w(z)\mathcal I_s\big[G_s(x,\cdot), \phi^\mu_{x}\big](z)dz=-2w(x).\label{sl}
\end{align}
\end{Lemma}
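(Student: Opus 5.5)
We split $G_s(x,\cdot)=F_s(x,\cdot)-H_\Omega(x,\cdot)$ and rescale around $x$ with $\widetilde\mu=\frac{\delta(x)}{2\sqrt2}\mu$, writing $\psi:=\rho\circ|\cdot|^2$, so that $\phi^\mu_{x}=1-\psi\big((\cdot-x)/\widetilde\mu\big)$. Both limits \eqref{fl} and \eqref{sl} are then reduced to integrals of the fixed profile $\psi$ against the singular kernel. The constants are supplied by $\int_{\R^N}F_s(0,\cdot)(-\Delta)^s\psi=\psi(0)=1$ and by Lemma \ref{Lem6.1}.

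\textbf{First limit \eqref{fl}.} By Lemma \ref{lem-ps-mu-x}(i), $(-\Delta)^s\phi^\mu_{x}(z)=-\widetilde\mu^{-2s}(-\Delta)^s\psi\big((z-x)/\widetilde\mu\big)$.

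For the regular part, $w(\cdot)H_\Omega(x,\cdot)$ is continuous, hence bounded, on $B_\varepsilon(x)$. We also have $|(-\Delta)^s\psi(y)|\le C(1+|y|)^{-N-2s}$. After the substitution $z=x+\widetilde\mu y$, the contribution is $O(\widetilde\mu^{N-2s})\to0$, because $N>2s$.

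For the singular part we substitute $z=x+\widetilde\mu y$ and use the homogeneity $F_s(x,x+\widetilde\mu y)=\widetilde\mu^{2s-N}b_{N,s}|y|^{2s-N}$. This gives
\[
-\int_{B_{\varepsilon/\widetilde\mu}}w(x+\widetilde\mu y)\,b_{N,s}|y|^{2s-N}(-\Delta)^s\psi(y)\,dy .
\]
The integrand is dominated by $C|y|^{2s-N}(1+|y|)^{-N-2s}\in L^1(\R^N)$, and $w$ is continuous at $x$. Dominated convergence therefore yields $-w(x)\int_{\R^N}b_{N,s}|y|^{2s-N}(-\Delta)^s\psi\,dy=-w(x)\psi(0)=-w(x)$. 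The last equality holds because $b_{N,s}|\cdot|^{2s-N}$ is the fundamental solution.

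\textbf{Second limit \eqref{sl}.} Since $\mathcal I_s$ is bilinear and the constant $1$ contributes nothing,
\[
\mathcal I_s[G_s(x,\cdot),\phi^\mu_{x}]=-\mathcal I_s\big[G_s(x,\cdot),\psi((\cdot-x)/\widetilde\mu)\big].
\]
We again split $G_s=F_s-H_\Omega$.

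For the $H_\Omega$ term, $H_\Omega(x,\cdot)$ is smooth near $x$ (it is $s$-harmonic in $\Omega$) and globally bounded with $C^s$ regularity. A direct estimate then gives $|\mathcal I_s[H_\Omega,\psi_{\widetilde\mu}](z)|\le C\widetilde\mu^{-2s+1}\min(1,(\widetilde\mu/|z-x|)^{N+2s})$ near $x$, with a similar bound if only Hölder regularity is used. Integrating over $B_\varepsilon(x)$ gives $O(\widetilde\mu^{N+1-2s})$, or in any case a vanishing contribution.

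For the $F_s$ term, scaling gives $\mathcal I_s[F_s(x,\cdot),\psi_{\widetilde\mu}](x+\widetilde\mu y)=\widetilde\mu^{-N}\,\mathcal I_s[b_{N,s}|\cdot|^{2s-N},\psi](y)$. Hence
\[
\int_{B_\varepsilon(x)} w\,\mathcal I_s[F_s,\psi_{\widetilde\mu}]\,dz=\int_{B_{\varepsilon/\widetilde\mu}}w(x+\widetilde\mu y)\,\mathcal I_s[b_{N,s}|\cdot|^{2s-N},\psi](y)\,dy .
\]
Provided $\mathcal I_s[b_{N,s}|\cdot|^{2s-N},\psi]\in L^1(\R^N)$, dominated convergence gives the limit $w(x)\int_{\R^N}\mathcal I_s[b_{N,s}|\cdot|^{2s-N},\psi]$. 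By definition, this integral is exactly the double integral $a_{N,s}[\rho]$, since $(\rho(|y|^2)-\rho(|z|^2))(|z|^{2s-N}-|y|^{2s-N})$ equals minus the product of differences. Lemma \ref{Lem6.1} gives $a_{N,s}[\rho]=-2$, so the limit is $-2w(x)$ once the overall signs are tracked. This matches the identity $\int\mathcal I_s[F,\psi]=\int F(-\Delta)^s\psi+\int\psi(-\Delta)^sF-\int(-\Delta)^s(F\psi)=1+1-0=2$, consistent with the claimed $-2w(x)$.

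\textbf{Main obstacle.} The key step is the integrability of $y\mapsto\mathcal I_s[|\cdot|^{2s-N},\psi](y)$ on $\R^N$, as a function and not only as the absolutely convergent double integral. This is what allows dominated convergence against the merely continuous $w$.
\begin{itemize}
\item For $|y|\ge4$ we have $\psi=0$ near $y$, so the integrand reduces to $|y|^{-N-2s}$ times bounded integrals, which is integrable at infinity.
\item Near $0$ we have $\psi\equiv1$ on $B_1$, so the integrand is bounded by $C|y|^{2s-N}$, which is integrable.
\item In the intermediate region $1\le|y|\le4$, where $\psi$ is nonconstant, the principal value is controlled through the smoothness of both factors.
\end{itemize}
This is essentially the case analysis already carried out in the proof of Lemma \ref{Lem6.1} (in particular the treatment of $B_1\times(B_4\setminus B_1)$ when $2s\ge1$). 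We will redo it pointwise to get a dominating function $C|y|^{2s-N}(1+|y|)^{-N-2s}$, plus a locally integrable term near $|y|=1$.
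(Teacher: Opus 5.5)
Your proposal is correct and is essentially the paper's argument. Both rescale by $\widetilde\mu$, split $G_s=F_s-H_\Omega$, and identify the limits via $\int_{\R^N}b_{N,s}|y|^{2s-N}(-\Delta)^s\psi(y)\,dy=\psi(0)=1$ for \eqref{fl} and via Lemma \ref{Lem6.1} for \eqref{sl}; the only difference is that the paper first truncates the inner integral of $\mathcal I_s$ to $B_{2\varepsilon}(x)$, where $H_\Omega(x,\cdot)$ is Lipschitz, and then applies dominated convergence directly to the rescaled double integral.

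Your ``main obstacle'' is not really one, because the proof of Lemma \ref{Lem6.1} already bounds the double integrand in absolute value, so Tonelli gives $\mathcal I_s[b_{N,s}|\cdot|^{2s-N},\psi]\in L^1(\R^N)$, with integral $-a_{N,s}[\rho]=2$ (not $a_{N,s}[\rho]$). Separately, your pointwise bound on the $H_\Omega$ term is too optimistic for $|z-x|\geq 2\widetilde\mu$, where one only has $O(\widetilde\mu^{N}|z-x|^{1-N-2s})$, although this still integrates to $o(1)$ over $B_\varepsilon(x)$.
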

The proof of Lemma \ref{ess-convergence-result} is given in Section \ref{secpro}.
\begin{remark}\label{rem-lem-2.6}
Up to choosing $\varepsilon>0$ sufficiently small, the assumption $w\in C(\Omega)$ in the lemma above can be reduced to
 assuming the continuity of $w$ at $x$.
\end{remark}
%
Recall that $x\in\Omega$ is given and that for any $\mu\in(0,1)$ and any $k\in \mathbb N*$, 
$$ A^k_{\mu}(x) := \int_{\Omega}\xi_k(z)\partial_i\big(\xi_k u\big)(z)(-\Delta)^s\Big[\phi^\mu_{x}G_s(x,\cdot)\Big](z)dz. $$
%
%
By Leibniz' rule, we have 
\begin{align*}
   A^k_{\mu}(x)
    &=\int_{\Omega}\xi_k^2(z)\partial_i u(z)(-\Delta)^s\Big[\phi^\mu_{x}G_s(x,\cdot)\Big](z)dz\nonumber\\
    &\quad+k\int_{\Omega}u\xi_k(z)\xi'(k\delta)(z)\partial_i\delta(z)(-\Delta)^s\Big[\phi^\mu_{x}G_s(x,\cdot)\Big](z)dz .
\end{align*}
Next, using the product rule \eqref{pl} with
 $u=\phi^\mu_{x} $, $v=G_s(x,\cdot) $, and also using $(-\Delta)^s G_s(x,\cdot)=0$ in $\Omega\setminus \overline{B_\mu(x)}$, we get
\begin{align*}
\small\text{$\int_{\Omega}\xi_k^2\partial_i u(-\Delta)^s\Big[\phi^\mu_{x}G_s(x,\cdot)\Big]dz
=\int_{\Omega}\xi_k^2\partial_i u\Bigg\{G_s(x,\cdot)(-\Delta)^s\phi^\mu_{x}-\mathcal I_s\big[\phi^\mu_{x},G_s(x,\cdot)\big]\Bigg\}dz.$}
\end{align*}
%
By combining the two identities above, we arrive at 
    \begin{align}\label{Lemid}
A^k_{\mu}(x) &= \int_{\Omega}\xi_k^2\partial_i u\Bigg\{G_s(x,\cdot)(-\Delta)^s\phi^\mu_{x}-\mathcal I_s\big(\phi^\mu_{x},G_s(x,\cdot)\big)\Bigg\}dz\nonumber\\
&\quad\quad  +k\int_{\Omega}u\xi_k(z)\xi'(k\delta)(z)\partial_i\delta(z)(-\Delta)^s\Big(\phi^\mu_{x}G_s(x,\cdot)\Big)(z)dz .
  \end{align}
The rest of the proof of Lemma \ref{Lem4.2}
consists in passing to the limit, as $\mu\to 0^+$ and $k\to+\infty$, in the two terms on the right hand side of \eqref{Lemid}. 
We start with a result regarding the double-limit of the first term, which by Lemma \ref{lem-ps-mu-x} can be reduced to the one of the same integral on an arbitrarily small ball centered at $x$. That is,
\begin{gather}
    \label{sec7-2}
    \lim_{\mu\to 0^+}
    \lim_{k\to+\infty}\int_{\Omega}\xi_k^2(z)\partial_iu(z)\Bigg\{G_s(x,\cdot)(-\Delta)^s\phi^\mu_{x}-\mathcal I_s\big[\phi^\mu_{x},G_s(x,\cdot)\big]\Bigg\}dz 
    \\ \nonumber   =\lim_{\mu\to 0^+}\lim_{k\to+\infty}\int_{B_\varepsilon(x)}\xi_k^2(z)\partial_i u(z)\Bigg\{G_s(x,\cdot)(-\Delta)^s\phi^\mu_{x}-\mathcal I_s\big[\phi^\mu_{x},G_s(x,\cdot)\big]\Bigg\}dz.
    \end{gather} 
\par\;
In view of \eqref{Lemid} and \eqref{sec7-2} and applying Lemma \ref{ess-convergence-result} with $w=\partial_iu\in C(\Omega)$, we get 
\begin{align*}
    &\lim_{\mu \to 0^+}\lim_{k\to+\infty}A^k_{\mu}(x)\nonumber\\
    &=\lim_{\mu\to 0^+}\lim_{k\to+\infty}\int_{B_\varepsilon(x)}\xi_k^2(z)\partial_i u(z)\Bigg\{G_s(x,\cdot)(-\Delta)^s\phi^\mu_{x}-\mathcal I_s\big[\phi^\mu_{x},G_s(x,\cdot)\big]\Bigg\}dz\nonumber\\
    &\quad\quad  +\lim_{\mu\to 0^+}\lim_{k\to+\infty}k\int_{\Omega}u\xi_k(z)\xi'(k\delta)(z)\partial_i\delta(z)(-\Delta)^s\Big(\phi^\mu_{x}G_s(x,\cdot)\Big)dz\nonumber\\
    &=\lim_{\mu\to 0^+}\int_{B_\varepsilon(x)}\partial_i u(z)\Bigg\{G_s(x,\cdot)(-\Delta)^s\phi^\mu_{x}-\mathcal I_s\big[\phi^\mu_{x},G_s(x,\cdot)\big]\Bigg\}dz\nonumber\\
    &\quad\quad  +\lim_{\mu\to 0^+}\lim_{k\to+\infty}k\int_{\Omega}u\xi_k(z)\xi'(k\delta)(z)\partial_i\delta(z)(-\Delta)^s\Big(\phi^\mu_{x}G_s(x,\cdot)\Big)dz\nonumber\\
    &\qquad=\partial_iu(x)+\lim_{\mu\to 0^+}\lim_{k\to+\infty}k\int_{\Omega}u\xi_k(z)\xi'(k\delta)(z)\partial_i\delta(z)(-\Delta)^s\Big(\phi^\mu_{x}G_s(x,\cdot)\Big)dz.
\end{align*}
Thus, to deduce Lemma \ref{Lem4.2} it suffices to  prove that
\begin{align}\label{pf}
   \lim_{\mu\to 0^+}\lim_{k\to+\infty}k\int_{\Omega}u\xi_k(z)\xi'(k\delta)(z)\partial_i\delta(z)(-\Delta)^s\Big(\phi^\mu_{x}G_s(x,\cdot)\Big)dz=0. 
\end{align}
\par\;
To see \eqref{pf}, we note that since $(-\Delta)^s\Big[\phi^\mu_{x}G_s(x,\cdot)\Big]\in L^\infty(\Omega)$ (see Lemma \ref{lem-ps-mu-x}) and $|u(z)|\leq C\delta^s(z)$ we have 
\begin{align}\label{sec7-19}
    &\Bigg|k\int_{\Omega}u\xi_k(z)\xi'(k\delta)(z)\partial_i\delta(z)(-\Delta)^s\Big[\phi^\mu_{x}G_s(x,\cdot)\Big](z)dz\Bigg|\nonumber\\
    &\leq C(\mu)k\int_{\Omega_{\frac{2}{k}}\setminus\Omega_{\frac{1}{k}}}|\rho'(k\delta(z))|\delta^s(z)dz\nonumber\\
    &\leq 2C(\mu)\int_{\Omega_{\frac{2}{k}}\setminus\Omega_{\frac{1}{k}}}\delta^{s-1}(z)|\rho'(k\delta(z))|dz\to 0\;\;\text{as}\;\;k\to+\infty
\end{align}
by the dominated convergence theorem. This ends the proof of \eqref{pf}. In the third line, we used that if $z\in \Omega_{\frac{2}{k}}\setminus\Omega_{\frac{1}{k}}$ then $1\leq k\delta(z)\leq 2$. The proof of Lemma \ref{Lem4.2} is therefore finished.

\subsection{Proof of Lemma \ref{ess-convergence-result}}
\label{secpro}
In this subsection, we prove Lemma \ref{ess-convergence-result}.
We start with the proof of \eqref{fl}. 
Using the scaling law \eqref{rescapsi} and  changing variables $\overline{z}=\frac{x-z}{\widetilde\mu}$, we obtain:
\begin{align}\label{sec7-6}
   &-\int_{B_\varepsilon(x)}w(z)G_s(x,z)(-\Delta)^s\phi^\mu_{x}(z)dz,\nonumber\\
   &=\widetilde\mu^{-2s}\int_{B_\varepsilon(0)}G_s(x,x-z)w(x-z)(-\Delta)^s\big(\rho\circ |\cdot|^2\big)\big(\frac{z}{\widetilde\mu}\big)dz,\nonumber\\
   &=\int_{B_{\frac{\varepsilon}{\widetilde\mu}}(0)}\widetilde\mu^{N-2s}G_s(x,x-\widetilde\mu z)w(x-\widetilde\mu z)(-\Delta)^s\big(\rho\circ |\cdot|^2\big)(z)dz.
\end{align}

 Since $w\in L^\infty_{loc}(\Omega)$, we have $\big|1_{B_{\frac{\varepsilon}{\widetilde\mu}}(0)}(z)w(x-\widetilde\mu z)\big|\leq C$.  Moreover, by the standard estimates on the Green function, see e.g \eqref{eq:greenfunct-estimate}, one has $\big|\widetilde\mu^{N-2s}G_s(x,x-\widetilde\mu z)\big|\leq C|z|^{2s-N}$. Now, since $w$ is continuous at $x$ and 
$$ 
\int_{\R^N}|z|^{2s-N}(-\Delta)^s\big(\rho\circ |\cdot|^2\big)(z)dz<\infty,
$$
we have by the Lebesgue dominated convergence theorem that
\begin{align}\label{sec7-9}
&\lim_{ \mu\to 0^+}\int_{B_{\frac{\varepsilon}{\widetilde\mu}}(0)}\widetilde\mu^{N-2s}G_s(x,x-\widetilde\mu z)w(x-\widetilde\mu z)(-\Delta)^s\big(\rho\circ |\cdot|^2\big)dz\nonumber\\&=w(x)\int_{\R^N}b_{N,s}|z|^{2s-N}(-\Delta)^s\big(\rho\circ |\cdot|^2\big)dz.
\end{align}
Here we used: 
$\widetilde\mu^{N-2s}G_s(x,x-\widetilde\mu z)=\frac{b_{N,s}}{|z|^{N-2s}}-\widetilde\mu^{N-2s}H_s(x,x-\widetilde\mu z)\to b_{N,s}|z|^{2s-N}$ as $\mu\to 0^+$ since $H_s(x,\cdot)\in C(\Omega)$ and $N>2s$.  
Finally, as we have already seen above, since $b_{N,s}|z|^{2s-N}$ is the fundamental solution of $(-\Delta)^s$ in $\R^N$ and $\rho\circ|\cdot|^2\in C^\infty_c(\R^N)$, we deduce by an integration by parts over the entire space $\R^N$ that:
\begin{equation}\label{sec7-8}
\int_{\R^N}b_{N,s}|z|^{2s-N}(-\Delta)^s\big(\rho\circ |\cdot|^2\big)dz=\rho(0)=1.
\end{equation}
Combining \eqref{sec7-8} and \eqref{sec7-9} we obtain \eqref{fl}. The proof of \eqref{sl} is somehow similar to the proof of \eqref{fl} with some minor differences, but for completeness, we give the full details of the argument. 

We first observe that
\begin{align}\label{sec7-11}
    &\lim_{\mu\to 0^+}\lim_{k\to+\infty}\int_{B_\varepsilon(x)}w(z)\mathcal I_s\big[\phi^\mu_{x},G_s(x,\cdot)\big](z)dz\nonumber\\
    &=\lim_{\mu\to 0^+}c_{N,s}\int_{B_\varepsilon(x)}w(z)\int_{B_{2\varepsilon}(x)}\frac{(\phi^\mu_{x}(y)-\phi^\mu_{x}(z))(G_s(x,y)-G_s(x,z))}{|y-z|^{N+2s}}dydz.
\end{align}
Indeed, for $\mu\in(0,1)$ sufficiently small and $|x-y|>\varepsilon$, we have $\phi^\mu_{x}(y)=0$ and thus recalling \eqref{eq:greenfunct-estimate}, we have 
\begin{align}
    &\Bigg|\int_{B_\varepsilon(x)}w(z)\int_{\R^N\setminus B_{2\varepsilon}(x)}\frac{(\phi^\mu_{x}(y)-\phi^\mu_{x}(z))(G_s(x,y)-G_s(x,z))}{|y-z|^{N+2s}}dydz\Bigg|,\nonumber\\
    &\leq C_1(\varepsilon,\Omega, N,s)\int_{B_\varepsilon(x)}\big(1+|G_s(x,z)|\big)\big|w(z)\big|\rho\Big(\big(\frac{2\sqrt{2}}{\mu\delta(x)}\big)^2|x-z|^2\Big)\int_{\R^N\setminus B_{2\varepsilon}(x)}\frac{dy}{1+|y|^{N+2s}}dz,\nonumber\\
    &\leq C_2(\varepsilon,\Omega, N,s)\int_{B_\varepsilon(x)}\big(1+|G_s(x,z)|\big)\rho\Big(\big(\frac{2\sqrt{2}}{\mu\delta(x)}\big)^2|x-z|^2\Big)dz.\label{Eq9/7}
\end{align}
The r.h.s of \eqref{Eq9/7} converges to zero as $\mu\to 0^+$ which proves the claim \eqref{sec7-11}.
\par\;

Next, applying the change of variables: $\overline{z}=\frac{x-z}{\widetilde\mu}$ and $\overline{y}=\frac{x-y}{\widetilde\mu}$, we get 
\begin{align}\label{sec7-12}
  &\int_{B_\varepsilon(x)}w(z)\int_{B_{2\varepsilon}(x)}\frac{(\phi^{\widetilde\mu}_{x}(y)-\phi^{\widetilde\mu}_{x}(z))(G_s(x,y)-G_s(x,z))}{|y-z|^{N+2s}}dydz \\ \label{issy2}
  &= \int_{B_{\frac{\varepsilon}{\widetilde\mu}(0)}  }w(x-\widetilde\mu z)\int_{B_{\frac{2\varepsilon}{\widetilde\mu}}(0)}\frac{\Big(\rho(|z|^2)-\rho(|y|^2)\Big)\Big(\widetilde\mu^{N-2s}G_s(x,x-\widetilde\mu y)-\widetilde\mu^{N-2s}G_s(x,x-\widetilde\mu z)\Big)}{|y-z|^{N+2s}}dydz.
\end{align}
We are going to split the integral on the right hand side above into two parts according to the decomposition:
\begin{align*}
   &\widetilde\mu^{N-2s}G_s(x,x-\widetilde\mu y)-\widetilde\mu^{N-2s}G_s(x,x-\widetilde\mu z)\\
   &=b_{N,s}\big(|y|^{2s-N}-|z|^{2s-N}\big)-\widetilde\mu^{N-2s}\big(H_s(x,x-\widetilde\mu y)-H_s(x,x-\widetilde\mu z)\big).
\end{align*}
Since $H_s(x,\cdot)$ is $s$-harmonic, we know $\nabla_yH_s(x,y)$ is locally bounded in $\Omega$. Consequently 
$$\big|H_s(x,x-\widetilde\mu y)-H_s(x,x-\widetilde\mu z)\big|\leq C(x,\varepsilon)\widetilde\mu|y-z|\quad \text{for all}\,z\in B_{\frac{\varepsilon}{\widetilde\mu}}(0)\;\;\&\;\;\text{ for all}\;\;y\in B_{\frac{2\varepsilon}{\widetilde\mu}}(0) .$$
Thus, we have
\begin{align}
&\Bigg|\iint_{B_{\frac{\varepsilon}{\widetilde\mu}(0)}\times B_{\frac{2\varepsilon}{\widetilde\mu}(0)}}\frac{\Big(\rho(|z|^2)-\rho(|y|^2)\Big)\Big(\widetilde\mu^{N-2s}H_s(x,x-\widetilde\mu y)-\widetilde\mu^{N-2s}H_s(x,x-\widetilde\mu z)\Big)}{|y-z|^{N+2s}}dydz\Bigg|\nonumber\\  \label{sec7-13}
&\leq C(x)\widetilde\mu^{N-2s+1}\iint_{B_{\frac{\varepsilon}{\widetilde\mu}(0)}\times B_{\frac{2\varepsilon}{\widetilde\mu}(0)}}\frac{\big|\rho(|z|^2)-\rho(|y|^2)\big||z-y|}{|y-z|^{N+2s}}dydz\;\;\to\;\; 0\;\;\text{as}\;\; \mu\to 0^+.
\end{align}
On the other hand, by Lemma \ref{Lem6.1} and the Lebesgue dominated convergence theorem, 
\begin{align}\label{sec7-14}
&\lim_{\mu\to 0^+}\iint_{B_{\frac{\varepsilon}{\widetilde\mu}(0)}\times B_{\frac{2\varepsilon}{\mu}(0)}}w(x-\widetilde\mu z)\frac{\Big(\rho(|z|^2)-\rho(|y|^2)\Big)\Big(b_{N,s}|y|^{2s-N}-b_{N,s}|z|^{2s-N}\Big)}{|y-z|^{N+2s}}dydz  \nonumber\\
&= w(x)b_{N,s}\iint_{\R^N\times\R^N}\frac{\Big(\rho(|z|^2)-\rho(|y|^2)\Big)\Big(|y|^{2s-N}-|z|^{2s-N}\Big)}{|y-z|^{N+2s}}dydz =- \frac{ 2 }{c_{N,s }} w(x).
\end{align}
Thus, combining \eqref{sec7-13} and \eqref{sec7-14} we get
\begin{align}
\label{issy}
  &\small\text{$\lim_{\mu\to 0^+}c_{N,s}\int_{B_{\frac{\varepsilon}{\widetilde\mu}(0)}}w(x-\widetilde\mu z)\int_{B_{\frac{2\varepsilon}{\widetilde\mu}}}\frac{\Big(\rho(|z|^2)-\rho(|y|^2)\Big)\Big(\widetilde\mu^{N-2s}G_s(x,x-\widetilde\mu y)-\widetilde\mu^{N-2s}G_s(x,x-\widetilde\mu z)\Big)}{|y-z|^{N+2s}}dydz$}\nonumber\\
  &\quad\quad\quad\quad=-2w(x).
\end{align}
Combining 
\eqref{sec7-11},  \eqref{issy2} and 
\eqref{issy} we arrive at  \eqref{sl}.

\subsection{Proof of Lemma \ref{Lem4.3}}
\label{sec8}

 Let us recall that 
\begin{align}
\mathcal  C_{k,\mu}(x):= \int_{\Omega}\xi_k\partial_i\Big(\xi_k \phi^\mu_{x}G_s(x,\cdot)\Big)(-\Delta)^su(z)dz\quad\text{for $x\in\Omega$}.
 \end{align}
 We consider the two different cases separately.
 \par\;

 \underline{Case 1}: $2s>1$. By Leibniz rule, we have 
\begin{align}\label{sec8-1}
\mathcal C_{k,\mu}(x)&=\int_{\Omega}\xi_k^2(z)\phi^\mu_{x}(z)\partial_i G_s(x,z)(-\Delta)^su(z)\,dz\nonumber\\
 &\;\;+\int_{\Omega}\xi_k(z)G_s(x,z)\partial_i\big(\xi_k\phi^\mu_{x}\big)(z)(-\Delta)^su(z)dz
\end{align}
Using that $(-\Delta)^su\in L^\infty(\Omega)$ and recalling \eqref{eq:greenfunct-estimate}, we estimate 
\begin{align}\label{sec8-2}
&\lim_{\mu\to 0^+} \lim_{k\to+\infty}\Big|\int_{\Omega}\xi_k^2(z)\partial_i\phi^\mu_{x}G_s(x,z)(-\Delta)^su(z)dz\Big|\nonumber\\
&\leq C_1(x)\lim_{\mu\to 0^+}\int_{B\big(x, \frac{\delta(x)}{2}\mu\big)\setminus B\big(x, \frac{\delta(x)}{2\sqrt{2}}\mu\big)}\Bigg|\frac{(x-z)\cdot e_i}{\mu^2}\rho'\Big(\big(\frac{2\sqrt{2}}{\mu\delta(x)}\big)^2|x-z|^2\Big)\Bigg||x-z|^{2s-N}dz\nonumber\\
&\leq C_2(x) \lim_{\mu\to 0^+}\int_{B\big(x, \frac{\delta(x)}{2}\mu\big)\setminus B\big(x, \frac{\delta(x)}{2\sqrt{2}}\mu\big)}  \frac{1}{\mu}\Bigg|\rho'\Big(\big(\frac{2\sqrt{2}}{\mu\delta(x)}\big)^2|x-z|^2\Big)\Bigg||x-z|^{2s-N}dz\nonumber\\
&\leq C_3(x)\lim_{\mu\to 0^+} \mu^{2s-1}
\int_{ \frac{\delta(x)}{2\sqrt{2}} }^{\frac{\delta(x)}{2}} 
r^{2s-1} |\rho'(r^2)|dr=0 ,
\end{align}
since $2s>1$. A similar argument as in \eqref{pf} gives 
\begin{align}\label{sec8-3}
\lim_{\mu\to 0^+}\lim_{k\to+\infty}\int_{\Omega}\xi_k(z)\phi^\mu_{x}\partial_i\xi_k(z)G_s(x,z)(-\Delta)^su(z)dz=0.
\end{align}
The first identity in the Lemma follows by taking the limit in \eqref{sec8-1} and using \eqref{sec8-2} and \eqref{sec8-3}.
\par\;

\underline{Case 2}: $2s\leq 1$. In this case, we have, by an integration by parts, that
\begin{align}\label{cases-fl}
\mathcal C_{k,\mu}(x):&= \int_{\Omega}\xi_k\partial_i\Big(\xi_k \phi^\mu_{x}G_s(x,\cdot)\Big)(-\Delta)^su(z)dz\nonumber\\
 &= \int_{\Omega}\partial_i\Big(\xi_k \phi^\mu_{x}G_s(x,\cdot)\Big)(z)\xi_k(z)f(z,u(z))dz\nonumber\\
 &=-\int_{\Omega}\xi_k \phi^\mu_{x}G_s(x,\cdot) \partial_i\big(\xi_kf(\cdot, u(\cdot))\big)(z)dz\nonumber\\
 &=-\int_{\Omega}\xi_k^2\phi^\mu_{x}G_s(x,\cdot)\Bigg\{\frac{\partial f}{\partial h_i}(z,u(z))+\partial_q f(z,u(z))\partial_{y_i} u(z)\Bigg\}dz\nonumber\\
 &\quad\quad-k\int_{\Omega}\xi_k(z)\phi^\mu_{x}(z)G_s(x,z)\rho'(k\delta(z))\partial_i\delta(z) f(z,u(z))dz
 \end{align}
On the one hand, an argument similar to the proof of \eqref{pf} gives 
\begin{align}\label{case2-sl}
    \lim_{k\to+\infty}k\int_{\Omega}\xi_k(z)\phi^\mu_{x}(z)G_s(x,z)\rho'(k\delta(z))\partial_i\delta(z) f(z,u(z))dz=0.
\end{align}
On the other hand, by the Lebesgue dominated convergence theorem we have:
\begin{align}\label{case2-tl}
 &\lim_{\mu\to 0^+}\lim_{k\to+\infty}\int_{\Omega}\xi_k^2\phi^\mu_{x}G_s(x,\cdot)\Bigg\{\partial_{h_i} f(z,u(z))+\partial_q f(z,u(z))\frac{\partial u}{\partial y_i}(z)\Bigg\}dz\nonumber\\
 &\quad\quad \quad \quad=  \int_{\Omega}G_s(x,z)\Bigg\{\partial_{h_i} f(z,u(z))+\partial_q f(z,u(z))\frac{\partial u}{\partial y_i}(z)\Bigg\}dz.
\end{align}
In view of \eqref{case2-sl} and \eqref{case2-tl}, we get the desired identity by taking the limit as $k\to+\infty$ and as $\mu\to 0^+$ in \eqref{cases-fl}.  Finally, to see \eqref{Lem4.3-id-3} we use the Leibniz' rule to write
\begin{align*}
 \lim_{\mu\to 0^+}\lim_{k\to+\infty} \mathcal C_{k,\mu}(x)&=\lim_{\mu\to 0^+}\lim_{k\to+\infty} \int_{\Omega}\xi_k\partial_i\Big(\xi_k \phi^\mu_{x}G_s(x,\cdot)\Big)(-\Delta)^su(z)dz\nonumber\\
 &=c\lim_{\mu\to 0^+} \int_{\Omega}\partial_i\Big(\phi^\mu_{x}G_s(x,\cdot)\Big)+c\lim_{\mu\to 0^+}\lim_{k\to+\infty}k\int_{\Omega}\xi_k\rho'(k\delta)\partial_i\delta\phi^\mu_{x}G_s(x,\cdot)dz.
 \end{align*}
Since $\phi^\mu_{x}G_s(x,\cdot)=0$ on $\partial\Omega$, it is clear that the first limit in the above identity is zero by the divergence theorem. We also know from the above that the second limit is zero as well. Consequently, we have $ \lim_{\mu\to 0^+}\lim_{k\to+\infty}\mathcal C_{k,\mu}(x)=0$ which proves \eqref{Lem4.3-id-3}.
The proof of Lemma \ref{Lem4.3} is therefore finished.


\section{Proof of Theorem \ref{Thm.2}}\label{sec4}

This section is devoted to the proof of Theorem  \ref{Thm.2}. 
We start with the proof of  \eqref{partial-Green}, for which an intermediate result is necessary: Lemma \ref{lem-lim-G-x-y}. Its proof is postponed to Section \ref{sec9}.

\subsection{Proof of  \eqref{partial-Green}}

First, we use  the functions $\R^N\to\R, \;z\mapsto \phi^\gamma_{y}(z)G_s(y,z)\xi_k(z)$ and $\R^N\to\R,\;z\mapsto\phi^\mu_{x}(z)G_s(x,z)\xi_k(z)$ with $x\neq y$ as test functions in \eqref{eq:pohozaev.1} and expand to obtain that 
\begin{equation} \label{all-2}
  E_k^{\mu,\gamma}(x,y) +  G_k^{\mu,\gamma}(x,y) = - F_k^{\mu,\gamma}(x,y) -  H_k^{\mu,\gamma}(x,y), 
\end{equation}
with 
\begin{align*}
E_k^{\mu,\gamma}(x,y) &:= \int_{\Omega}\xi_k(z)\partial_i\big(\xi_k \phi^\gamma_{y}G_s(y,\cdot)\big)(z)(-\Delta)^s\Big[\phi^\mu_{x}G_s(x,\cdot)\Big](z)dz, 
\\ F_k^{\mu,\gamma} (x,y)&:=  \int_{\Omega}\partial_i (\xi_k\phi^\mu_{x}G_s(x,\cdot))(z)\Big\{\phi^\gamma_{y}G_s(y,\cdot)(-\Delta)^s\xi_k-\mathcal I_s\big[\xi_k\phi^\gamma_{y}, G_s(y,\cdot)\big]\Big\}dz ,
\\ G_k^{\mu,\gamma}(x,y) &:= \int_{\Omega}\xi_k\partial_i\Big(\xi_k \phi^\mu_{x}G_s(x,\cdot)\Big)(-\Delta)^s\Big[\phi^\gamma_{y}G_s(y,\cdot)\Big](z)dz ,
\\ H_k^{\mu,\gamma}(x,y) &:=  \int_{\Omega}\partial_i\Big(\xi_k \phi^\mu_{x}G_s(x,\cdot)\Big)\Big\{\phi^\gamma_{y}G_s(y,\cdot)(-\Delta)^s\xi_k-\mathcal I_s[\phi^\mu_{x}G_s(x,\cdot),\xi_k]\Big\}dz .
 \end{align*}
On the one hand, we applied the lemma \ref{DFW} to the couple $\big(\phi^\gamma_{y}G_s(y,\cdot),\phi^\mu_{x}G_s(x,\cdot)\big)$ and then passed to the limit, first as $\mu\to 0^+$ and then as $\gamma\to 0^+$ to get that
\begin{align}\label{lim-F-x-y}
    \lim_{\gamma\to 0^+}\lim_{\mu\to 0^+}\lim_{k\to+\infty} F_k^{\mu,\gamma} (x,y)=\frac{\Gamma^2(1+s)}{2}\int_{\partial \Omega}\gamma_0^s(G_s(x,\cdot))(z)\gamma_0^s(G_s(y,\cdot))(z) \nu_i(z)d\sigma(z),
\end{align}
\begin{align}\label{lim-H-x-y}
   \lim_{\gamma\to 0^+} \lim_{\mu\to 0^+}\lim_{k\to+\infty} H_k^{\mu,\gamma} (x,y)=\frac{\Gamma^2(1+s)}{2}\int_{\partial \Omega} \gamma_0^s(G_s(x,\cdot))(z) \gamma_0^s(G_s(y,\cdot))(z) \nu_i(z)d\sigma(z)
\end{align}
where $\nu$ is the outward unit normal to the boundary. On the other hand, Lemma \ref{Lem4.2} applied with $u=\phi^\gamma_{y}G_s(y,\cdot)$ yields
\begin{align*}
    \lim_{\mu\to 0^+}\lim_{k\to+\infty} E_k^{\mu,\gamma}(x,y) &=\partial_{x_i}(\phi^\gamma_{y}G_s(y,\cdot))(x)\nonumber\\
    &=\phi^\gamma_{y}(x)\partial_{x_i} G_s(y,x)-\frac{16}{\delta^2(x)}\frac{(x-y)\cdot e_i}{\gamma^2}\rho'\Big(\big(\frac{2\sqrt{2}}{\gamma\delta(x)}\big)^2|x-y|^2\Big).
\end{align*}
Passing to the limit as $\gamma\to 0^+$, and observing that 
$\rho'\Big(\big(\frac{2\sqrt{2}}{\gamma\delta(x)}\big)^2|x-y|^2\Big)$ vanishes  when $\gamma>0$ is sufficiently small, 
we get
\begin{align}\label{lim-E-x-y}
    \lim_{\gamma\to 0^+} \lim_{\mu\to 0^+}\lim_{k\to+\infty}E_k^{\mu,\gamma}(x,y)=\partial_{x_i} G_s(y,x).
\end{align}

In view of \eqref{all-2}, \eqref{lim-F-x-y}, \eqref{lim-H-x-y}, \eqref{lim-E-x-y}, the formula announced in \eqref{partial-Green} follows once we prove the following result.
\begin{Lemma}\label{lem-lim-G-x-y}
Let $s\in (0,1)$. Then for any $x,y\in\Omega$ with $x\neq y$, 
there holds: 
\begin{equation}
\lim_{\gamma\to 0^+}  \lim_{\mu\to 0^+}\lim_{k\to+\infty} G_k^{\mu, \gamma}(x,y) = \partial_{y_i} G_s(x,y).
\end{equation}
\end{Lemma}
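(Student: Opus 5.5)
Our plan is to treat $G_k^{\mu,\gamma}(x,y)$ the way $\mathcal C_{k,\mu}(x)$ was handled in Lemma \ref{Lem4.3}, but now $(-\Delta)^s u$ is replaced by $(-\Delta)^s[\phi^\gamma_y G_s(y,\cdot)]$. The new feature is that this quantity concentrates near $y$ as $\gamma\to0^+$ instead of being a fixed bounded function. Throughout we fix $\varepsilon>0$ with $\varepsilon<|x-y|/4$ and $\overline{B_{2\varepsilon}(x)}\cup\overline{B_{2\varepsilon}(y)}\subset\Omega$.

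\textbf{Limit in $k$.} By Lemma \ref{lem-ps-mu-x}$(ii)$ (applied at $y$), $g_\gamma:=(-\Delta)^s[\phi^\gamma_yG_s(y,\cdot)]\in L^\infty(\Omega)$ for each fixed $\gamma$. Expanding with Leibniz' rule, $\partial_i(\xi_k\phi^\mu_xG_s(x,\cdot))=\xi_k\partial_i(\phi^\mu_xG_s(x,\cdot))+k\rho'(k\delta)\partial_i\delta\,\phi^\mu_xG_s(x,\cdot)$. The second piece is killed as $k\to\infty$ exactly as in \eqref{pf}: we use $|G_s(x,z)|\le C\delta^s(z)$ near $\partial\Omega$, boundedness of $g_\gamma$, and dominated convergence on $\Omega_{2/k}\setminus\Omega_{1/k}$. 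For the first piece, $\phi^\mu_xG_s(x,\cdot)$ is smooth away from $\partial\Omega$ and its gradient is $O(\delta^{s-1})\in L^1$ near the boundary. Dominated convergence then gives
\[
\lim_{k\to\infty}G_k^{\mu,\gamma}=\int_\Omega \partial_i\big(\phi^\mu_xG_s(x,\cdot)\big)\,g_\gamma\,dz .
\]

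\textbf{Limit in $\mu$.} Since $\phi^\gamma_yG_s(y,\cdot)$ is smooth near $x$ (for small $\gamma$ it equals $G_s(y,\cdot)$ there), $g_\gamma$ is bounded, and in fact $g_\gamma=0$ on $\Omega\setminus B_{\gamma\delta(y)/2}(y)$ by the $s$-harmonicity of $G_s(y,\cdot)$ away from $y$. Hence the integrand is supported near $y$ and near $\partial\Omega$ only; in particular $g_\gamma$ vanishes on $B_{2\varepsilon}(x)$. We then let $\mu\to0^+$. On $B_{\gamma\delta(y)/2}(y)$, for $\mu$ small, $\phi^\mu_x\equiv1$, so the integrand becomes $\partial_iG_s(x,\cdot)g_\gamma$. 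The integrals of $\partial_i\phi^\mu_x\,G_s(x,\cdot)g_\gamma$ vanish identically since $g_\gamma=0$ on the support of $\partial_i\phi^\mu_x$. We obtain $\int_\Omega \partial_{z_i}G_s(x,z)\,g_\gamma(z)\,dz$, which is finite because near $\partial\Omega$ we use $|\nabla_zG_s(x,z)|\lesssim\delta^{s-1}$ and $g_\gamma\in L^\infty$.

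\textbf{Limit in $\gamma$ (main step).} We write $g_\gamma=G_s(y,\cdot)(-\Delta)^s\phi^\gamma_y-\mathcal I_s[\phi^\gamma_y,G_s(y,\cdot)]$ on $\Omega\setminus\{y\}$, by the product rule \eqref{pl} and $(-\Delta)^sG_s(y,\cdot)=0$ there. The function $w:=\partial_{z_i}G_s(x,\cdot)$ is continuous near $y$ since $y\ne x$. By the localization argument of Lemma \ref{lem-ps-mu-x}$(iii)$, the contribution of $\Omega\setminus B_\varepsilon(y)$ tends to $0$; there $w$ is $L^1$ and the two terms are uniformly bounded and converge pointwise to $0$. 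On $B_\varepsilon(y)$ we apply Lemma \ref{ess-convergence-result} at the point $y$ with this $w$, using Remark \ref{rem-lem-2.6}. This yields
\[
\lim_{\gamma\to0^+}\int_{B_\varepsilon(y)}w\,g_\gamma=-w(y)-(-2w(y))=w(y)=\partial_{y_i}G_s(x,y),
\]
which is the claim.

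The delicate point is justifying that the boundary layer term and the far-field tail do not interfere uniformly in $\gamma$. This requires that $\int_{\Omega\setminus B_\varepsilon(y)}|\partial_iG_s(x,\cdot)|\,|g_\gamma|\to0$, including near $\partial\Omega$ where $\partial_iG_s(x,\cdot)$ is only $O(\delta^{s-1})$. The bound $|g_\gamma(z)|\le C(\varepsilon)\int_{B_{\gamma\delta(y)/2}(y)}(1+|G_s(y,\cdot)|)\to0$ uniformly for $z$ away from $y$ handles this, since $\delta^{s-1}\in L^1(\Omega)$.
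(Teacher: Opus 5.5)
\textbf{There is a genuine gap, in the $\mu$-step.} Your architecture is the paper's. You rewrite $g_\gamma:=(-\Delta)^s[\phi^\gamma_yG_s(y,\cdot)]$ via the product rule, kill the boundary layer as in \eqref{pf}, localize at $y$, and apply Lemma \ref{ess-convergence-result} at $y$ with $w=\partial_{z_i}G_s(x,\cdot)$ to get $-w(y)+2w(y)=w(y)$. Quoting that lemma instead of redoing the scaling computation \eqref{sec9-8}--\eqref{sec9-12} is fine.

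The $\mu$-step, however, rests on a false claim: $g_\gamma$ does \emph{not} vanish on $\Omega\setminus B_{\gamma\delta(y)/2}(y)$. The $s$-harmonicity of $G_s(y,\cdot)$ does not survive the truncation, because the operator is nonlocal. With $\rho^\gamma_y:=1-\phi^\gamma_y$, for every $z\in\Omega\setminus\operatorname{supp}\rho^\gamma_y$,
\[
g_\gamma(z)=c_{N,s}\int_{\R^N}\frac{\rho^\gamma_y(w)\,G_s(y,w)}{|z-w|^{N+2s}}\,dw>0 .
\]
Your last paragraph in fact uses the correct statement, that $g_\gamma$ is only small away from $y$. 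In particular $g_\gamma(x)>0$. So $\int G_s(x,\cdot)\,\partial_i\phi^\mu_x\,g_\gamma\,dz$ does not vanish, and a neighbourhood of $x$ contributes to $\int\phi^\mu_x\,\partial_iG_s(x,\cdot)\,g_\gamma\,dz$.

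For $s>1/2$ this is harmless. The former term is $O(\mu^{2s-1}\|g_\gamma\|_{L^\infty})$ exactly as in \eqref{sec8-2}, and $\partial_iG_s(x,\cdot)\in L^1(\Omega)$, so your outline goes through. For $s\le 1/2$ it breaks. Near $x$, $|\nabla_zG_s(x,z)|\sim|z-x|^{2s-N-1}$, which is not integrable. Hence your limit object $\int_\Omega\partial_{z_i}G_s(x,z)\,g_\gamma(z)\,dz$ is not absolutely convergent. In the $\gamma$-step, the assertion that $w$ is $L^1$ on $\Omega\setminus B_\varepsilon(y)$ is false, since that set contains $x$, so dominated convergence has no majorant.

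\textbf{The missing idea} is to use that $g_\gamma$ is smooth and small near $x$, and to move the derivative onto it. For $\gamma$ small, the formula above holds on $B_{2\varepsilon}(x)$ with the kernel bounded away from its singularity. Differentiating under the integral gives $\|g_\gamma\|_{C^1(\overline{B_\varepsilon(x)})}\le C(x,y)\int\rho^\gamma_y\,G_s(y,\cdot)\,dw\to 0$ as $\gamma\to0^+$. For $\mu$ so small that $\phi^\mu_x\equiv 1$ near $\partial B_\varepsilon(x)$,
\[
\int_{B_\varepsilon(x)}\partial_i\big(\phi^\mu_xG_s(x,\cdot)\big)\,g_\gamma\,dz=\int_{\partial B_\varepsilon(x)}G_s(x,z)\,g_\gamma(z)\,\frac{z_i-x_i}{\varepsilon}\,d\sigma(z)-\int_{B_\varepsilon(x)}\phi^\mu_x\,G_s(x,\cdot)\,\partial_ig_\gamma\,dz .
\]
This converges as $\mu\to0^+$ because $G_s(x,\cdot)\in L^1(\Omega)$, and the limit tends to $0$ as $\gamma\to0^+$.

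With this added, the rest of your argument is correct for all $s\in(0,1)$. On $\Omega\setminus(B_\varepsilon(x)\cup B_\varepsilon(y))$ you have the majorant $C\delta^{s-1}$ together with the uniform decay of $g_\gamma$, and on $B_\varepsilon(y)$ Lemma \ref{ess-convergence-result} applies. The paper never claims that $g_\gamma$ vanishes; in \eqref{sec9-1} it localizes to $B_\varepsilon(y)$ only in the limit, by appeal to Lemma \ref{lem-ps-mu-x}$(iii)$. That appeal also tacitly requires the argument above near $x$ when $s\le 1/2$.
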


\subsection{Proof of Lemma \ref{lem-lim-G-x-y}}\label{sec9}

First we note that since $(-\Delta)^s G_s(y,\cdot)=0$ in $\Omega\setminus\overline{B_\varepsilon(y)}$ for all $\varepsilon>0$, and we  apply the fractional product law \eqref{pl} to obtain:
\begin{align}\label{G-k-mu-gamma}
G_k^{\mu,\gamma}(x,y) = \int_{\Omega}\xi_k\partial_i\Big(\xi_k \phi^\mu_{x}G_s(x,\cdot)\Big) \Bigg\{G_s(y,\cdot)(-\Delta)^s\phi^\gamma_{y}-\mathcal I_s\big[G_s(y,\cdot),\phi^\gamma_{y}\big]\Bigg\}dz.
\end{align}
Let $x,y\in\Omega$ such that $x\neq y$ and let $G_s(x,\cdot)$ and $G_s(y,\cdot)$ be the Green functions with singularity $x$ and $y$ respectively. Recall 
\[
\phi^\gamma_{y}(z)=1-\rho\Big(\big(\frac{2\sqrt{2}}{\gamma\delta(y)}\big)^2|y-z|^2\Big) \;\;\;\text{and}\;\;\; \phi^\mu_{x}(z)=1-\rho\Big(\big(\frac{2\sqrt{2}}{\mu\delta(x)}\big)^2|x-z|^2\Big)\quad\text{for \;$\gamma,\mu\in (0,1)$.}
\]
 By Leibniz rule, we have 
 \begin{equation}\label{dedede}
G_k^{\mu,\gamma}(x,y) =  G_{k,1}^{\mu,\gamma}(x,y)+ G_{k,2}^{\mu,\gamma}(x,y) , 
 \end{equation}
 where 
\begin{align*} G_{k,1}^{\mu,\gamma}(x,y) &:= \int_{\Omega}\xi_k^2\partial_i\Big(\phi^\mu_{x}G_s(x,\cdot)\Big) \Bigg\{G_s(y,\cdot)(-\Delta)^s\phi^\gamma_{y}-\mathcal I_s\big[G_s(y,\cdot),\phi^\gamma_{y}\big]\Bigg\}dz , \nonumber\\
 G^{k,2}_{\mu,\gamma}(x,y) :=&2k\int_{\Omega}\xi_k(z)\rho'(k\delta)\partial_i\delta(z)\Big(\phi^\mu_{x}G_s(x,\cdot)\Big) (-\Delta)^s\big(\phi^\gamma_{y}G_s(z,\cdot)\big)dz .
\end{align*}
Next, arguing as in the proof of \eqref{psi-mu-equ} in Lemma \ref{lem-ps-mu-x} we have:
\begin{align}\label{sec9-1}
    &\lim_{\gamma\to 0^+}\lim_{\mu\to 0^+}\lim_{k\to +\infty}G_{k,1}^{\mu,\gamma}(x,y)\nonumber\\
    &=\small\text{$\lim_{\gamma\to 0^+}\lim_{\mu\to 0^+}\lim_{k\to+\infty}\int_{B_\varepsilon(y)}\xi_k^2(z)\partial_i\Big(\phi^\mu_{x}G_s(x,\cdot)\Big)(z) \Bigg\{G_s(y,\cdot)(-\Delta)^s\phi^\gamma_{y}-\mathcal I_s\big[G_s(y,\cdot),\phi^\gamma_{y}\big]\Bigg\}dz,$}
\end{align}
for all $\varepsilon>0$.  To proceed further, we pick
\begin{equation}\label{Eq-choice-of-vareps}
\varepsilon=\varepsilon_0:=\min\Big(\frac{\delta(y)}{4},\frac{|x-y|}{2}\Big)>0\;\;\; \text{so that}\;\;\;\overline{B_{2\varepsilon_0}(y)}\subset \Omega.
\end{equation}
\par\;
By the choice of $\varepsilon_0$, we know that if $z\in B_{\varepsilon_0}(y)$, then $|x-z|\geq 2^{-1}|x-y|>0$ and hence $|G_s(x,z)|\leq C(\varepsilon_0)$ for all $z\in B_{\varepsilon_0}(y)$. Consequently
\begin{align}\label{sec9-7}
 &\Bigg|\int_{B_{\varepsilon_0}(y)}G_s(x,z)\partial_i\phi^\mu_{x}(z) \Bigg\{G_s(y,\cdot)(-\Delta)^s\phi^\gamma_{y}-\mathcal I_s\big[G_s(y,\cdot),\phi^\gamma_{y}\big]\Bigg\}dz\Bigg|\nonumber\\
 &\leq C(\varepsilon_0, \Omega,\gamma)\int_{B_{\varepsilon_0}(y)}|z-y|^{2s-N}\frac{1}{\mu}\Big|\rho'\Big(\big(\frac{2\sqrt{2}}{\mu\delta(x)}\big)^2|x-z|^2\Big)\Big|dz\to 0\quad\text{as}\quad\mu\to 0^+ .
\end{align}
Indeed, since by the choice of $\varepsilon_0>0$ we have $|x-z|\geq 2^{-1}|x-y|>0$ for all $z\in B_{\varepsilon_0}(y)$, therefore $\rho'\Big(\big(\frac{2\sqrt{2}}{\mu\delta(x)}\big)^2|x-z|^2\Big)=0$ when $\mu>0$ is sufficiently small.  In \eqref{sec9-7} we used that $(-\Delta)^s\phi^\gamma_{y}\in L^\infty(\R^N)$ and $\mathcal I_s\big[G_s(y,\cdot),\phi^\gamma_{y}\big]\in L^\infty(B_{\varepsilon_0}(y))$. In view of \eqref{sec9-1} and \eqref{sec9-7}, we have 
    \begin{align}
    &\lim_{\gamma\to 0^+}\lim_{\mu\to 0^+}\lim_{k\to +\infty} G_{k,1}^{\mu,\gamma}(x,y)\nonumber\\
    &=\small\text{$\lim_{\gamma\to 0^+}\lim_{\mu\to 0^+}\lim_{k\to+\infty}\int_{B_{\varepsilon_0}(y)}\xi_k^2(z)\partial_i\Big(\phi^\mu_{x}G_s(x,\cdot)\Big)(z) \Bigg\{G_s(y,\cdot)(-\Delta)^s\phi^\gamma_{y}-\mathcal I_s\big[G_s(y,\cdot),\phi^\gamma_{y}\big]\Bigg\}dz$}\nonumber\\
    &=\lim_{\gamma\to 0^+}\lim_{\mu\to 0^+}\int_{B_{\varepsilon_0}(y)}\partial_i\Big(\phi^\mu_{x}G_s(x,\cdot)\Big)(z) \Bigg\{G_s(y,\cdot)(-\Delta)^s\phi^\gamma_{y}-\mathcal I_s\big[G_s(y,\cdot),\phi^\gamma_{y}\big]\Bigg\}dz\nonumber\\
    &=\lim_{\gamma\to 0^+}\int_{B_{\varepsilon_0}(y)}\partial_iG_s(x,\cdot)(z) \Bigg\{G_s(y,\cdot)(-\Delta)^s\phi^\gamma_{y}-\mathcal I_s\big[G_s(y,\cdot),\phi^\gamma_{y}\big]\Bigg\}dz. \label{findeli}
\end{align}
Using the scaling law \eqref{rescapsi} (with $\gamma$ instead of $\mu$),  we get 
\begin{align}\label{sec9-8}
   &-\int_{B_{\varepsilon_0}(y)}G_s(y,z)\partial_{z_i} G_s(x,z) (-\Delta)^s\phi^\gamma_{y}(z)dz,\nonumber\\
   &=\widetilde\gamma^{-2s}\int_{B_{\varepsilon_0}(0)}G_s(y,y-z)\partial_{z_i}  G_s(x,y-z)(-\Delta)^s\big(\rho\circ |\cdot|^2\big)\big(\frac{z}{\widetilde\gamma}\big)dz,\nonumber\\
   &=\widetilde\gamma^{N-2s}\int_{B_{\frac{\varepsilon_0}{\widetilde\gamma}}(0)}G_s(y,y-\widetilde\gamma z)\partial_{z_i}G_s(x,y-\widetilde\gamma z)(-\Delta)^s\big(\rho\circ |\cdot|^2\big)(z)dz,\nonumber\\
   &=\int_{B_{\frac{\varepsilon_0}{\widetilde\gamma}}(0)}\widetilde\gamma^{N-2s}G_s(y,y-\widetilde\gamma z)\Big[\partial_{z_i} G_s(x,y-\widetilde\gamma z)-\partial  _{z_i}G_s(x,y)\Big](-\Delta)^s\big(\rho\circ |\cdot|^2\big)(z)dz,\nonumber\\
   &\;\;\;+\frac{\partial  G_s}{\partial y_i}(x,y)\int_{B_{\frac{\varepsilon_0}{\widetilde\gamma}}(0)}\widetilde\gamma^{N-2s}G_s(y,y-\gamma z)(-\Delta)^s\big(\rho\circ |\cdot|^2\big)(z)dz.
\end{align}
By the choice of $\varepsilon_0$ and since $G_s(x,\cdot)\in C^\infty_{loc}(\Omega\setminus\{x\})$, we have 
$$
\Big|\partial_{z_i}  G_s(x,y-\widetilde\gamma z)-\partial_{z_i}  G_s(x,y)\Big|\leq \widetilde\gamma C(\varepsilon_0)|z|\quad\forall\,z\in B_{\frac{\varepsilon_0}{\widetilde\gamma}}(0).
$$
Using this and that $\big|\widetilde\gamma^{N-2s}G_s(y,y-\gamma z)\big|\leq C|z|^{2s-N}$, we get 
\begin{align}\label{sec9-9}
   &\Bigg|\int_{B_{\frac{\varepsilon_0}{\widetilde\gamma}}(0)}\widetilde\gamma^{N-2s}G_s(y,y-\widetilde\gamma z)\Big[\partial_{z_i} G_s(x,y-\widetilde\gamma z)-\partial_{z_i}  G_s(x,y)\Big](-\Delta)^s\big(\rho\circ |\cdot|^2\big)(z)dz\Bigg|\nonumber\\
   &\leq C(\varepsilon_0)\widetilde\gamma\int_{\R^N}\frac{dz}{|z|^{N-2s-1}(1+|z|^{N+2s})}\to 0\;\;\text{as}\;\; \gamma\to 0^+.
\end{align}
On the one hand, we know by \eqref{sec7-8} that
\begin{equation}\label{sec9-10}
   \lim_{\gamma\to 0^+}\int_{B_{\frac{\varepsilon_0}{\widetilde\gamma}}(0)}\widetilde\gamma^{N-2s}G_s(y,y-\widetilde\gamma z)(-\Delta)^s\big(\rho\circ |\cdot|^2\big)(z)dz= 1.
\end{equation}
Passing into the limit in \eqref{sec9-8} and taking into account \eqref{sec9-9} and \eqref{sec9-10}, we end up with 
\begin{equation}\label{sec9-11}
 \lim_{\gamma\to 0^+}\int_{B_{\varepsilon_0}(y)}\partial_{z_i}  G_s(x,z)G_s(y,z)(-\Delta)^s\phi^\gamma_{y}(z)dz =-\partial_{z_i} G_s(x,y).   
\end{equation}
A similar argument, as in the proof of \eqref{sl} above, also yields
\begin{align}\label{sec9-12}
 &\lim_{\gamma\to 0^+}\int_{\Omega}\partial_{z_i}  G_s(x,z)\mathcal I_s\big[G_s(y,z),\phi^\gamma_{y}\big](z)dz\nonumber\\
 &=\lim_{\gamma\to 0^+}\int_{B_{\varepsilon_0}(y)}\partial_{z_i}G_s(x,z)\mathcal I_s\big[G_s(y,z),\phi^\gamma_{y}\big](z)dz\nonumber\\
 &=-2\partial_{z_i} G_s(x,y).   
\end{align}

In view of \eqref{dedede}, \eqref{findeli}, \eqref{sec9-11} and \eqref{sec9-12}, we are done once we know that 
\begin{align*}
  &\lim_{\gamma\to 0^+}\lim_{\mu\to 0^+}\lim_{k\to+\infty}  G^{k,2}_{\mu,\gamma}(x,y)\\
 &=\lim_{\gamma\to 0^+}\lim_{\mu\to 0^+}\lim_{k\to+\infty} 2k\int_{\Omega}\xi_k(z)\rho'(k\delta)\partial_i\delta(z)\Big(\phi^\mu_{x}G_s(x,\cdot)\Big) (-\Delta)^s\big(\phi^\gamma_{y}G_s(z,\cdot)\big)dz\\
 &=0.
\end{align*}
The later follows by a similar argument as in \eqref{pf}. The proof of Lemma \ref{lem-lim-G-x-y} is therefore finished.

\section{Proof of Corollary \ref{nodeg}}
\label{sec-nodeg}

This section is devoted to the proof of Corollary \ref{nodeg}. Assuming $N> 2s$, we note that
$$\partial_{x_i}(|x-y|^{2s-N})+\partial_{y_i}(|x-y|^{2s-N})=0.$$ 
Now recalling the decomposition \eqref{Eq-spliting-of-Green}, the identity \eqref{partial-Green} becomes: for any $(x,y)$ in $\Omega\times\Omega$, with $x\neq y$,
$$
\partial_{x_i} H_s(y,x)+\partial_{y_i} H_s(x,y)= - \Gamma^2(1+s)\int_{\partial \Omega} \gamma_0^s(G_s(x,\cdot))(z)\gamma_0^s(G_s(y,\cdot))(z) \nu_i(z)d\sigma(z). 
$$
But now that we have removed the singular part, we can let $y$ go to $x$, and since 
$$  \partial_{x_i} \mathcal R_s(x) = \partial_{x_i} H_s(x,x)+\partial_{y_i} H_s(x,x),$$
we arrive at \eqref{repres-gradient-Robin func}. This proves the identity in the case $N> 2s$. The case $N=2s=1$ follows similarly by using again the decomposition \eqref{Eq-spliting-of-Green} and that \[
\partial_{x_j}(\log|x-y|)+\partial_{y_j}(\log|x-y|)=0.
\]

\section{Proof of Corollary \ref{non-deg-frac-Robin}}
\label{sec-robin}

This section is devoted to the proof of Corollary \ref{non-deg-frac-Robin}.
The proof is an adaptation of the proof given in \cite{G} on the corresponding problem for the classical case of the Laplacian. For $j\in \{1,2,\cdots, N\}$, 
 we  set
$$H_j:=\big\{x\in\R^N:x_j=0\big\} \quad \text{ and } \quad 
\Omega_j := H_j \cap \Omega,$$ 
 and we assume that  $\Omega$ is symmetric with respect to the hyperplane $H_j$ and that $0$ is in $\Omega$. 
We call  $T_j:\R^N\to\R^N$  the reflection with respect to the hyperplane $H_j$.
We start with the following simple lemma regarding the symmetry of the fractional Green function.
\begin{Lemma}
    Fix $j\in\{1,2,\cdots, N\}$ and let $ y\in \Omega_j$. Then we have 

    \begin{equation}\label{lab-claim}
    G_s( y, T_j(x))=G_s( y, x)\quad \text{for a.e $x\in \R^N$}.
    \end{equation}
\end{Lemma}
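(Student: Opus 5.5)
The natural route is uniqueness for the Dirichlet problem \eqref{DIR} combined with the invariance of $(-\Delta)^s$ under orthogonal maps. Fix $y\in\Omega_j$, so that $T_j(y)=y$, and define $\widetilde G(x):=G_s(y,T_j(x))$ for $x\in\R^N$. Since $\Omega$ is symmetric with respect to $H_j$, we have $T_j(\Omega)=\Omega$ and $T_j(\R^N\setminus\Omega)=\R^N\setminus\Omega$. Hence $\widetilde G=0$ in $\R^N\setminus\Omega$. We will show that $\widetilde G$ solves the same problem \eqref{DIR} as $G_s(y,\cdot)$, and then conclude by uniqueness that $\widetilde G=G_s(y,\cdot)$ a.e., which is \eqref{lab-claim}.

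\textbf{Step 1 (invariance).} For any $\psi\in C^\infty_c(\Omega)$ put $\psi_j:=\psi\circ T_j$, which again lies in $C^\infty_c(\Omega)$. Since $T_j$ is a linear isometry with $|\det T_j|=1$, the kernel $|x-z|^{-N-2s}$ is invariant under the change of variables $(x,z)\mapsto(T_jx,T_jz)$. This gives the pointwise identity $(-\Delta)^s(\psi\circ T_j)=((-\Delta)^s\psi)\circ T_j$. It also gives $\mathcal E_s(w\circ T_j,\psi)=\mathcal E_s(w,\psi_j)$ for $w$ in the relevant energy class, with the $L^1$ pairing $\int \widetilde G\,(-\Delta)^s\psi$ handled similarly.

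\textbf{Step 2 (distributional equation).} Interpret \eqref{DIR} as $\int_{\R^N}G_s(y,z)(-\Delta)^s\psi(z)\,dz=\psi(y)$ for all $\psi\in C^\infty_c(\Omega)$. This is legitimate because $G_s(y,\cdot)\in L^1$ and, by the estimate \eqref{eq:greenfunct-estimate}, $(-\Delta)^s\psi$ is bounded with decay $|z|^{-N-2s}$. By Step 1 and the change of variables $z\mapsto T_jz$,
\[
\int \widetilde G\,(-\Delta)^s\psi\,dz=\int G_s(y,z)\,((-\Delta)^s\psi)(T_jz)\,dz=\int G_s(y,z)(-\Delta)^s\psi_j\,dz=\psi_j(y)=\psi(T_jy)=\psi(y).
\]
So $\widetilde G$ satisfies $(-\Delta)^s\widetilde G=\delta_y$ in $\mathcal D'(\Omega)$ and vanishes outside $\Omega$.

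\textbf{Step 3 (uniqueness), the main obstacle.} Let $D:=\widetilde G-G_s(y,\cdot)$. Then $D$ vanishes outside $\Omega$, lies in $L^1(\Omega)$, and satisfies $\int D\,(-\Delta)^s\psi=0$ for all test functions $\psi$. Very weak solutions of this kind are not automatically zero without some extra information, so the uniqueness must be tied to the precise class in which the Green function is unique. The cleanest way to avoid this is to work through the representation formula \eqref{repres-sol-Dir}. For $f\in C^\infty_c(\Omega)$, let $u$ be the unique bounded solution of \eqref{Dir-01} with data $f$. Then $u\circ T_j$ is the unique bounded solution with data $f\circ T_j$, by Step 1 and uniqueness in $\mathcal H^s_0(\Omega)$ via Lax--Milgram. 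Evaluating \eqref{repres-sol-Dir} at $y=T_jy$ gives
\[
\int G_s(y,z)f(z)\,dz=u(y)=(u\circ T_j)(y)=\int G_s(y,z)f(T_jz)\,dz=\int \widetilde G(z)f(z)\,dz.
\]
Since $f\in C^\infty_c(\Omega)$ is arbitrary, $\widetilde G=G_s(y,\cdot)$ a.e. in $\Omega$. Both functions vanish outside $\Omega$, so \eqref{lab-claim} follows on all of $\R^N$.

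This version uses only uniqueness in $\mathcal H^s_0(\Omega)$ and the representation formula, and it also makes Step 2 unnecessary. If continuity of $G_s(y,\cdot)$ away from $y$ is needed, the a.e. identity then upgrades to a pointwise one on $\Omega\setminus\{y\}$.
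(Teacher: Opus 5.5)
Your proof is correct. Up to Step 2 it coincides with the paper's argument. The paper also writes $\int_\Omega G_s(y,z)(-\Delta)^s\phi\,dz=\phi(y)$, uses $(-\Delta)^s(\phi\circ T_j)=((-\Delta)^s\phi)\circ T_j$ and $T_jy=y$ to get the same identity for $G_s(y,T_j\cdot)$, and then concludes ``by a density argument''. The genuine difference is your Step 3.

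The paper's closing step is exactly the point you flag. Comparing the two identities only shows that $D:=G_s(y,T_j\cdot)-G_s(y,\cdot)$ annihilates $\{(-\Delta)^s\phi:\ \phi\in C^\infty_c(\Omega)\}$. That family is too small to force an $L^1$ function vanishing outside $\Omega$ to be zero: on the unit ball, $(1-|x|^2)_+^{s-1}$ is such a function, nonzero and $s$-harmonic in $B_1$.

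To make the paper's route rigorous one needs extra information on $D$. First, the singular parts cancel, since $|y-T_jz|=|y-z|$. Second, $|D|\le C\delta^s$ near $\partial\Omega$, by \eqref{eq:greenfunct-estimate} and $\delta\circ T_j=\delta$. So $D$ is a bounded $s$-harmonic function in $\Omega$ that vanishes continuously at $\partial\Omega$ and outside, and the maximum principle gives $D\equiv 0$.

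Your route instead tests $G_s(y,\cdot)$ against arbitrary $f\in C^\infty_c(\Omega)$ through \eqref{repres-sol-Dir}. In effect you use the non-compactly supported test functions $\phi\in\mathcal H^s_0(\Omega)$ solving $(-\Delta)^s\phi=f$. Uniqueness is then transferred to Lax--Milgram uniqueness in $\mathcal H^s_0(\Omega)$ plus the $T_j$-invariance of $\mathcal E_s$. The conclusion follows from the fundamental lemma of the calculus of variations, applied to an $L^1(\Omega)$ difference.

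This costs nothing beyond the representation formula the paper already takes for granted. You use it pointwise at $x=y$, which is fine since both sides are continuous in $x$. It also makes Step 2 superfluous, as you note.

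One minor slip: in Step 2, the decay of $(-\Delta)^s\psi$ is not a consequence of \eqref{eq:greenfunct-estimate}. That estimate only gives $G_s(y,\cdot)\in L^1(\Omega)$; the decay is a separate standard fact.
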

\begin{proof}
    We have, on the one hand, that
    \begin{align}\label{lab1}
        \int_{\Omega}G_s( y, z)(-\Delta)^s\phi(z)dz=\phi( y)\quad\forall\, \phi\in C^\infty_c(\Omega).
    \end{align}
    On the other hand, 
   using the symmetry of  $\Omega$ and the definition of fractional Laplacian, we have that for any $\phi\in C^\infty_c(\Omega)$, for any $z\in \Omega$, 
   \[
   (-\Delta)^s(\phi\circ T_j)(z)=((-\Delta)^s\phi)(T_j(z)
   \]
   By changing variables and using \eqref{lab1}  with $\phi\circ T_j$ instead of $\phi$, 
   we deduce that 
    \begin{align}\label{lab2}
        \int_{\Omega}G_s( y, T_j(z))(-\Delta)^s\phi(z)dz =\phi( y) .
    \end{align}
   where we recall that $T_j(y)=y$.  Comparing \eqref{lab1} and \eqref{lab2}, and by a density argument, we get the result.
\end{proof}
\par\;
Now, for all $ y\in \Omega_j$, by Theorem \ref{Thm.2}, 
\begin{align}
 \partial_{y_j}\mathcal R_s( y)  &=\Gamma^2(1+s)\int_{\partial\Omega}\Big(\gamma_0^s(G_s( y,\cdot))\Big)^2(\sigma)\nu_j(\sigma)d\sigma \label{lab-fin}    .
\end{align}
Using the symmetry of  $\Omega$, \eqref{lab-claim} and the fact that the normal vector satisfies 
 $\nu_j(T_j(\sigma))=-\nu_j(\sigma)$, we deduce that 
 $\partial_{y_j}\mathcal R_s( y) =0$.
  If, moreover, there is $i\in \{1,2,\cdots, N\}$, with $i \neq j$, such that $\Omega$ is also symmetric with respect to the hyperplane $H_i$, then one can differentiate the previous identity in the direction $i$ so that $\partial_{ij}  \mathcal R_s( y)=0$.

\section{Appendix}
\label{subsec-9.1}

The following properties of the Green function were used throughout the manuscript. 

Let $\Omega$ be a bounded open set of $\R^N$ of class $C^{1,1}$. Then for all $x,y\in\Omega$ with $x\neq y$, we have (see \cites{Tedeuz, CS})
\begin{equation}\label{eq:greenfunct-estimate}
    c_1\min\Big(\frac{1}{|x-y|^{N-2s}}, \frac{\delta^s(x)\delta^s(y)}{|x-y|^N}\Big)\leq \frac{G_s(x,y)}{\gamma_{N,s}}\leq c_2\min\Big(\frac{1}{|x-y|^{N-2s}}, \frac{\delta^s(x)\delta^s(y)}{|x-y|^N}\Big)
\end{equation}
for some constant $c_1, c_2>0$ and an explicit constant $\gamma_{N,s}$.  Moreover, for any arbitrary bounded open subset $\Omega$  of $\R^N$ , we have (see e.g \cite[Corollary 3.3]{Bogdan2002}):
\begin{equation}\label{gradient-estimate-green-func}
    \big|\nabla _y G_s(x,y)\big|\leq N\frac{G_s(x,y)}{\min\{|x-y|,\delta(y)\}}, \quad \text{for all $x,y\in \Omega$ with $x\neq y$}.
\end{equation}
It follows from \eqref{eq:greenfunct-estimate} and \eqref{gradient-estimate-green-func} that if $\Omega$ is of class $C^{1,1}$ and $x\in\Omega$, then $\nabla _y G_s(x,y)$ is --uniformly in $x$--  $L^1$-integrable in $\Omega$  provided that $s\in (1/2,1)$, see e.g \cite[Lemma 9]{Bogdan2011}.
\bigskip 

\section*{Declarations}
 \ \par \noindent {\bf Ethical Approval:}  Not applicable.

 \ \par \noindent {\bf Funding
:}  The first author was partially supported by the Alexander von Humboldt-Professorship program and by
 the Transregio 154 Project “Mathematical Modelling, Simulation and Optimization Using the Example
 of Gas Networks” of the Deutsche Forschungsgemeinschaft.

\ \par \noindent {\bf Availability of data and materials
:}  Not applicable.

\bigskip 
\bigskip \ \par \noindent {\bf Acknowledgements.}  This work was initiated when F. S. was visiting the  FAU DCN-AvH during some snowy days. He warmly thanks Enrique Zuazua and his team  for their kind hospitality. The first author thanks Sven Jarohs, M. Moustapha Fall and T. Weth for useful comments.


\end{document}